\newtheorem{thm}{Theorem}[section]
\newtheorem{prop}[thm]{Proposition}
\newtheorem{lem}[thm]{Lemma}
\newtheorem{cor}[thm]{Corollary}
     \newcommand{\sA}{\mathcal A}
     \newcommand{\sE}{\mathcal E}
\newcommand{\fM}{\mathfrak M}     \newcommand{\sM}{\mathcal M}
     \newcommand{\sO}{\mathcal O}
\newcommand{\fR}{\mathfrak R}     \newcommand{\sR}{\mathcal R}
     \newcommand{\sS}{\mathcal S}
\newcommand{\fU}{\mathfrak U}
\newcommand{\fr}{\mathfrak r}
\newcommand{\afr}{\mathscr{R}_{\textrm{aff}}}
\newcommand{\afrsum}{\; \hat + \;}
\newcommand{\afrprod}{\; \hat \cdot \;}
\newcommand{\afc}{\mathscr{C}_{\textrm{aff}}}
\def\XXint#1#2#3{{\setbox0=\hbox{$#1{#2#3}{\int}$ }
\vcenter{\hbox{$#2#3$ }}\kern-.6\wd0}}
\newcommand{\R}{\mathbb{R}}
\newcommand{\Z}{\mathbb{Z}}
\newcommand{\C}{\mathbb{C}}
\newcommand{\N}{\mathbb{N}}
\newcommand{\K}{\mathbb{K}}
\newcommand{\iu}{{i\mkern1mu}}
\theoremstyle{definition}
\newtheorem{definition}[thm]{Definition}
\newtheorem{notation}[thm]{Notation}
\newtheorem{example}[thm]{Example}
\theoremstyle{remark}
\newtheorem{remark}[thm]{Remark}
\numberwithin{equation}{section}
\newtheoremstyle{ser}
{8pt}
{8pt}
{\it}
{}
{\sf}
{:}
{6mm}
{}
\newtheoremstyle{serr}
{8pt}
{8pt}
{\normalfont}
{}
{\sf}
{.}
{6mm}
{}
\theoremstyle{ser}
\theoremstyle{serr}
\theoremstyle{ser}
\newtheorem{prob}{Problem}
\begin{document}


\title[A Framework for Rank Identities]{A framework for rank identities - with a view towards operator algebras}


\author{Soumyashant Nayak}

\address{Indian Statistical Institute\\
 Statistics and Mathematics Unit\\
 8th Mile, Mysore Road\\
 Bengaluru -- 560 059, Karnataka, India.
   ORCiD: 0000-0002-6643-6574}
\email{soumyashant@gmail.com}
\urladdr{https://www.isibang.ac.in/$\sim$soumyashant/} 




\maketitle

\begin{abstract}
For every square matrix $A$ over a field $\mathbb{K}$, we have the equality $\mathrm{rank}(A) + \mathrm{rank}(I-A) = \mathrm{rank}(I) + \mathrm{rank}(A-A^2)$ where $I$ denotes the identity matrix with the same dimensions as $A$. In this article, we start a program to systematically characterize and generalize such rank identities with a view towards applications to operator algebras. We initiate the study of so-called ranked rings (unital rings with a `rank-system'), the main examples of interest being finite von Neumann algebras, Murray-von Neumann algebras, and von Neumann rank-rings. In our framework, a field $\mathbb{K}$ may be viewed as a ranked ring with a $\mathbb{Z}^+$-valued rank-system consisting of the usual rank functions on $M_n(\mathbb{K})$ (for $n \in \mathbb{N})$ and serves as the motivating example. We show that a finite von Neumann algebra $\mathscr{R}$ with center $\mathscr{C}$ (and the corresponding Murray-von Neumann algebra $\afr$) may be endowed with a $\mathscr{C}^{+}$-valued rank-system, considering $\mathscr{C}^{+}$ as a commutative monoid with respect to operator addition. We give an algorithm to generate rank identities in ranked $\mathbb{K}$-algebras via the polynomial function calculus, and in ranked complex Banach algebras via the holomorphic function calculus. As an illustrative application, using these abstract rank identities we show that the sum of finitely many idempotents $e_1, \ldots, e_m$ in a finite von Neumann algebra is an idempotent if and only if they are mutually orthogonal, that is, $e_i e_j = \delta_{ij} e_i$ for $1 \le i, j \le m$. In contrast, this does not hold in general in the ring of bounded operators on an infinite-dimensional complex Hilbert space. 

\bigskip\noindent
{\bf Keywords:}
Center-valued rank, rank identities, ranked ring, Murray-von Neumann algebras
\vskip 0.01in \noindent
{\bf MSC2010 subject classification:} 13G05, 15A03, 46L10, 47L60
\end{abstract}






\section{Introduction}

In the development of the theory of operator algebras, the self-adjoint algebras (such as $C^*$-algebras, von Neumann algebras) have played an instrumental role, with far-reaching implications. Even for the study of a single operator of interest, the study of algebras of operators (containing the operator) provides valuable insight.  For instance, the natural setting to discuss the spectral decomposition of a normal operator is the smallest von Neumann algebra generated by the operator. In the simpler case of a linear transformation $T$ acting on a finite-dimensional complex vector space, a thorough understanding of $T$ can be achieved through the study of the algebra of polynomials in $T$. (Note that the adjoint operation, $*$, does not play a role here). One may represent $T$ in a Jordan canonical form in a suitably chosen basis, which makes the fundamental behaviour of the linear transformation apparent. Unfortunately, such a lucid description is unavailable in general for operators acting on an infinite-dimensional Hilbert space owing to our meagre understanding of reducibility properties of arbitrary operators; a fundamental roadblock is a deep unsolved problem in operator theory commonly known as the {\it invariant subspace problem}. For this reason, we deem results that hold for arbitrary operators (without assumptions such as self-adjointness, normality, etc.) to be invaluable.

In this article, we explore some algebraic aspects of finite von Neumann algebras, with copious motivation from the study of linear transformations on finite dimensional spaces. Our discussion treats algebras of matrices over a field and finite von Neumann algebras on an equal footing illuminating the relationship between the two subjects. Many of the results we obtain are novel even in the finite dimensional case. 

The rank of a linear transformation may be viewed as a measure of its non-degeneracy. In a sense, the rank and nullity are the only invariants of a linear transformation under any change of basis in the domain or the target vector space. Identities involving ranks of various linear transformations carry information about the subtle relationship between the `degeneracy' of these transformations. For a square matrix $A$ with entries from a field $\K$, we have the equality 
\begin{equation}
\label{eqn:rank_id}
\mathrm{rank}(A) + \mathrm{rank}(I - A) = \mathrm{rank}(I) + \mathrm{rank}(A-A^2).
\end{equation}
(Here $I$ stands for the identity matrix with same dimensions as $A$). One easily concludes from the sub-additivity of rank that 
\begin{equation}
\label{eqn:subadd_rank}
\mathrm{rank}(I) \le \mathrm{rank}(A) + \mathrm{rank}(I-A)
\end{equation}
with equality if and only if $A$ is an idempotent. In fact, we have the more general result.
\begin{prop}[Cochran's Theorem]
\label{prop:matrix_rank_cochran}
\textsl{
For a field $\K$, let $A_1, \ldots, A_m$ be matrices in $M_n(\K)$ such that $\sum_{i=1}^m A_i = I$. If $\sum_{i=1}^m \mathrm{rank} (A_i) = n$, then the $A_i$'s are mutually orthogonal idempotents, that is, $A_i A_j = \delta_{ij}A_j$ for $1 \le i, j \le m$.
}
\end{prop}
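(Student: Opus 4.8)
The plan is to translate the rank hypothesis into a statement about column spaces and then bootstrap a numerical (dimension) coincidence into the desired algebraic identities. First I would set $V_i := \mathrm{range}(A_i) \subseteq \K^n$, so that $\dim V_i = \mathrm{rank}(A_i)$. Since $\sum_{i=1}^m A_i = I$, every $x \in \K^n$ can be written as $x = Ix = \sum_{i=1}^m A_i x$ with $A_i x \in V_i$; hence $\K^n = V_1 + \cdots + V_m$ as a sum of subspaces.

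Next I would run the dimension count. For any finite collection of subspaces one has $\dim(V_1 + \cdots + V_m) \le \sum_{i=1}^m \dim V_i$, with equality precisely when the sum is direct. Here the left-hand side equals $\dim \K^n = n$ by the previous step, and the right-hand side equals $\sum_{i=1}^m \mathrm{rank}(A_i) = n$ by hypothesis, so equality holds and the sum is direct: $\K^n = V_1 \oplus \cdots \oplus V_m$. This is exactly the equality case of the subadditivity of rank recorded in \eqref{eqn:subadd_rank}, now applied to a sum of $m$ terms.

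The crux is to upgrade this direct-sum decomposition to the operator identities. The direct sum furnishes canonical projections $P_i \colon \K^n \to V_i$ (projection onto $V_i$ along $\bigoplus_{k \ne i} V_k$), which satisfy $\sum_i P_i = I$, $P_i^2 = P_i$, and $P_i P_j = 0$ for $i \ne j$. The key observation is that for each $x$ the expression $x = \sum_{i=1}^m A_i x$ is a decomposition of $x$ whose $i$-th component $A_i x$ lies in $V_i$; by the uniqueness of the decomposition in a direct sum this component must coincide with $P_i x$. Hence $A_i = P_i$ for every $i$, and the relations above give $A_i A_j = \delta_{ij} A_j$ at once.

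I expect the main obstacle to be precisely this last passage — from the numerical equality $\sum_{i=1}^m \mathrm{rank}(A_i) = n$ to genuine operator identities — since a dimension count alone only controls sizes, not the maps themselves. The mechanism that makes it work is the uniqueness built into a direct-sum decomposition, which pins down each $A_i$ as the corresponding projection. If one prefers to stay closer to the framework of \eqref{eqn:rank_id}--\eqref{eqn:subadd_rank}, one may instead group $A_1$ against $A_2 + \cdots + A_m$ and deduce, via the equality case of \eqref{eqn:subadd_rank}, that each $A_i$ is idempotent, then read off $A_i A_j = 0$ for $i \ne j$ from the directness of the sum; but the projection argument yields both conclusions simultaneously.
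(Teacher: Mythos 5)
Your proof is correct. The dimension count is the standard one: $\dim(V_1+\cdots+V_m)\le\sum_i\dim V_i$ with equality exactly when the sum is direct, and once $\K^n=V_1\oplus\cdots\oplus V_m$, the uniqueness of the decomposition $x=\sum_i A_i x$ (each summand lying in $V_i$) forces $A_i=P_i$, the canonical projection associated with the direct sum; both idempotency and mutual orthogonality then drop out at once. This is a complete argument for the statement as posed.

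It is, however, a genuinely different route from the one the paper takes. The paper never proves Proposition \ref{prop:matrix_rank_cochran} directly: it is quoted as the classical result, and what is actually proved is the generalization, Theorem \ref{thm:cochran}, in the setting of Murray--von Neumann algebras. That proof is by induction on the number of summands and runs entirely through abstract rank identities: Proposition \ref{prop:op_rank_idem} (the equality case of $\fr(A)+\fr(I\afrdiff A)\ge I$, itself a consequence of the identity $\rho(x)+\rho(1-x)=\rho(1)+\rho(x-x^2)$ of Corollary \ref{cor:cochran_rank}), the sub-additivity of rank (Theorem \ref{thm:vna_rank_sub}), and Corollary \ref{cor:fva_idem_split}. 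The trade-off is exactly the one your closing paragraph gestures at. Your argument is shorter and more transparent, but it leans on two features special to matrices over a field: an underlying vector space whose subspaces one can sum, and an integer-valued dimension for which the equality case of subadditivity can be read off from a basis count. Neither survives in the paper's target setting, where the rank takes values in the positive cone $\mathscr{C}^{+}$ of the center of a finite von Neumann algebra and the operators may be unbounded elements of $\mathscr{R}_{\textrm{aff}}$; there is no direct-sum-of-ranges bookkeeping available, which is precisely why the paper builds the rank-identity machinery. Notably, your alternative sketch --- grouping $A_1$ against $A_2+\cdots+A_m$ and invoking the equality case of \eqref{eqn:subadd_rank} to get idempotency of each $A_i$ --- is essentially the skeleton of the paper's inductive proof of Theorem \ref{thm:cochran}, so you have in effect rediscovered both proofs: the classical one in full, and the generalizable one in outline.
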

\noindent Loosely, we may interpret the above result as describing a necessary and sufficient condition (based on the ranks), for a form of independence of the matrices that form a `partition' of the identity matrix. 

In this article, we transplant these ideas and some of their consequences to the setting of finite von Neumann algebras and the corresponding $*$-algebras of affiliated operators (Murray-von Neumann algebras). Although the center-valued dimension function makes it possible to define a notion of rank of an operator in a finite von Neumann algebra and its corresponding Murray-von Neumann algebra, a careful study seems to have been neglected in the literature. Before we dive into the details, we outline the main goals of this article, which are:
\begin{itemize}
\item[(i)] To develop a framework to systematically study rank identities via an algebraic object (which we call the $\fU \fR$-monoid) associated with a unital ring, the rings of primary interest being polynomial rings over fields, free associative algebras over fields, the ring of entire functions on $\C$, and their quotients with (two-sided) ideals generated by relations of interest; 
\item[(ii)] To explore properties of the center-valued rank on finite von Neumann algebras and corresponding Murray-von Neumann algebras;
\item[(iii)] To derive several useful abstract rank identities involving idempotents (with applications in the context of projections in a finite von Neumann algebra);
\item[(iv)] To apply the results so obtained to gain some insight into the structure of finite von Neumann algebras and Murray-von Neumann algebras.
\end{itemize}
\noindent In the next section, we give a brief overview of the results in this article which strive towards the above goals.

\section{Overview of Results}
The key notion introduced in this article is that of a {\it ranked ring}, which is a ring $\mathcal{R}$ equipped with a rank-system, $\rho : \sqcup_{n \in \N} M_n(\sR) \to \sM$, which is a mapping from the set of square matrices over $\sR$ to a commutative monoid $\sM$. The formal definition of a rank-system is given in \S  \ref{sec:ranked_rings}.\ The main property of our abstract system of rank functions is motivated by the invariance of the rank under change of basis, which is codified by invariance of $\rho$ under left multiplication or right multiplication by invertible matrices. In Theorem \ref{thm:univ_rank_sys}, we show that every ring $\sR$ comes equipped with a universal rank-system which serves as the fountainhead for all rank-systems on $\sR$. A field $\K$ may be viewed as a ranked ring with a $\mathbb{Z}^+$-valued rank-system ($\mathbb{Z}^+ := \mathbb{N} \cup \{ 0 \}$) consisting of the usual rank functions on $M_n(\K)$ (for $n \in \mathbb{N})$ and serves as the motivating example.

A few words of explanation are in order regarding this level of generality. The techniques that are used in the literature (cf.\ \cite{tian-styan1}, \cite{tian-styan2}) to obtain rank identities use little beyond the additive structure of $\mathbb{Z}^+$ and thus may be gainfully transferred to cases where the values taken by the rank-system are in a commutative monoid. This generality comes in especially handy after we exhibit a center-valued rank-system for finite von Neumann algebras and the corresponding Murray-von Neumann algebras (see \S \ref{subsec:center_valued}), and also in the context of $\R^{+}$-valued rank-systems on von Neumann regular rings. For von Neumann regular rings, we show that there is a natural one-to-one correspondence between $\R^{+}$-valued rank-systems and pseudo-rank functions (as defined in \cite{goodearl}); see Remark \ref{rmrk:rank_sys_ring}.

In Algorithm \ref{subsec:algo_rank_id}, we give an explicit algorithm for generating rank identities similar to identity (\ref{eqn:rank_id}) involving polynomials in a single element in a ranked $\K$-algebra. In particular, these rank identities hold for all square matrices over $\K$. The algorithm uses Theorem \ref{thm:rank_id_pol} mentioned below and the fact that the univariate polynomial ring, $\K[t]$, is a principal ideal domain. Furthermore, the list of rank identities obtained is exhaustive in a generic sense (see Proposition \ref{prop:exhaustive}). In Theorem \ref{thm:rank_id_pol}, we consider the equivalence classes (denoted $[\cdot]$) of associates in GCD domains which naturally form a distributive lattice called the divisibility lattice, with the join ($\vee$) given by the GCD and the meet ($\wedge$) given by the LCM.
\vskip 0.1in

\noindent {\bf Theorem \ref{thm:rank_id_pol} }
Let $\mathcal{R}$ be a $(\sM, {\it rk})$-ranked ring and $\K$ be a subfield of the center of $\sR$. For non-zero polynomials $p_1, \ldots, p_n$, $q_1, \ldots, q_n$ in $\K[t]$, if
\begin{equation}
\wedge_{1\le i_1 < \cdots < i_k  \le n} ([p_{i_1}] \vee \cdots \vee [p_{i_k}]) = \wedge_{1 \le i_1 < \cdots < i_k \le n} ([q_{i_1}] \vee \cdots \vee [q_{i_k}] ), 1 \le k \le n,
\end{equation}
then for $x$ in $\mathcal{R}$, we have
$$\sum_{i=1}^n {\it rk}(p_i(x)) = \sum_{i=1}^n {\it rk}(q_i(x)).$$
\vskip 0.1in

Using the results discussed above, we resolve a question posed in \cite[pg.\ 104, 2.8.3]{the_rota_way}. Reflecting on the Frobenius rank inequality, $$\mathrm{rank}(AB) + \mathrm{rank}(BC) \le \mathrm{rank}(B) + \mathrm{rank}(ABC), \textrm{ for }A, B, C \in M_n(\K),$$ the authors of \cite{the_rota_way} wonder about a theory of rank inequalities - ``... are they all consequences of a finite set of inequalities?" As observed in (\ref{eqn:subadd_rank}), rank inequalities may be obtained from rank identities by removing some terms from one side of the equality. Algorithm \ref{subsec:algo_rank_id} in this article yields infinitely many fundamentally distinct rank identities in terms of polynomial expressions involving a single matrix in $M_n(\K)$. Hence we conclude that the answer to the above question is negative.

In a similar vein, we prove the theorem below characterizing rank identities in ranked complex Banach algebras involving the holomorphic function calculus. Here $\sO (\C)$ denotes the ring of entire functions on $\C$.
\vskip 0.1in

\noindent {\bf Theorem \ref{thm:rank_entire_id} }
Let $\mathcal{R}$ be a $(\sM, {\it rk})$-ranked complex Banach algebra. For entire functions $f_1, \ldots, f_n$, $g_1, \ldots, g_n$ in $\mathcal{O}(\mathbb{C})$ if 
\begin{equation}
\wedge_{1\le i_1\le \cdots \le i_k  \le n} ([f_{i_1}] \vee \cdots \vee [f_{i_k}]) = \wedge_{1\le i_1\le \cdots \le i_k  \le n} ([g_{i_1}] \vee \cdots \vee [g_{i_k}] ), 1 \le k \le n,
\end{equation}
then for $x$ in $\mathcal{R}$, we have
$$\sum_{i=1}^n {\it rk}(f_i(x)) = \sum_{i=1}^n {\it rk}(g_i(x)).$$
\vskip 0.1in

With the goal of systematizing the study of abstract rank identities and {\it en route} to Algorithm \ref{subsec:algo_rank_id} mentioned above, in \S \ref{sec:l_functor} we construct a covariant functor $\fU \fR$ from the category of (unital) rings to the category of commutative monoids. The $\fU \fR$-monoid of a ring helps equip the ring with a rank-system which is universal in the appropriate sense (see Theorem \ref{thm:univ_rank_sys}). Although the $\fU \fR$-monoid is defined for all rings, for the purpose of studying rank identities, the rings of primary interest are formal algebras such as polynomial rings over a field ($\K[t_1, \ldots, t_n]$), free associative algebras over a field ($\K \langle t_1, \ldots, t_n \rangle$), etc. Each element of the $\fU \fR$-monoid captures a class of rank identities. The $\fU \fR$-monoid of quotient rings of these formal rings with (two-sided) ideals generated by relations of interest is also important as evidenced by the discussion on rank identities involving idempotents (see \S \ref{subsec:rank_id_idem}) and subsequent applications in \S \ref{sec:app}. In this context, the ring of interest is $\K\langle t_1, t_2 \rangle / (t_1 - t_1^2, t_2 - t_2^2)$ where $(t_1 - t_1^2, t_2 - t_2^2)$ is the two-sided ideal of $\K \langle t_1, t_2 \rangle$ generated by  the non-commutative polynomials $t_1 - t_1^2$ and $t_2 - t_2^2$.

In \S \ref{sec:L_bezout}, we characterize the $\fU \fR$-monoid of elementary divisor domains, and Murray-von Neumann algebras. Keeping in mind that elementary divisor domains are B\'{e}zout domains and thus GCD-domains, we completely characterize the $\fU \fR$-monoid of elementary divisor domains in terms of multichains in the divisibility lattice.\ For the elementary divisor domains $\K[t]$ and $\mathcal{O}(\C)$, we use this characterization to prove the previously mentioned key results, Theorem \ref{thm:rank_id_pol}, Theorem \ref{thm:rank_entire_id}.

\subsection{Applications to Operator Algebras}
\label{subsec:op_alg_res_overview}
We briefly examine how the above discussion relates to operator algebras. Let $\mathscr{R}$ be a finite von Neumann algebra acting on the Hilbert space $\mathscr{H}$ with center $\mathscr{C}$. Let $\afr$ denote the Murray-von Neumann algebra of closed densely-defined operators affiliated with $\mathscr{R}$. Using the canonical $\mathscr{C}$-valued trace on $\mathscr{R}$, we define the rank (denoted by $\fr _c$) of an operator in $\afr$ as the trace of its range projection, which is the $\mathscr{C}^{+}$-valued dimension of its range projection; note that the range projection of an operator in $\afr$ is a projection in $\mathscr{R}$. In the context of $\mathscr{R}$, this definition may also be found in \cite{tr_rank} by the name of tr-rank.

In \cite{nayak_matrix_mvna}, the present author has obtained an abstract description of Murray-von Neumann algebras independent of their representation and shown that $M_n(\afr) \cong M_n(\mathscr{R})_{\textrm{aff}}$ as unital ordered complex topological $*$-algebras (see \cite[Theorem 4.14]{nayak_mvna}).  With this isomorphism at hand, we prove in Theorem \ref{thm:vna_rank_system} that the rank functions (scaled by $n$) on $M_n(\afr)$ define a $\mathscr{C}^{+}$-valued rank-system for $\afr$, where $\mathscr{C}^{+}$ is considered as a commutative monoid with respect to operator addition. Thus the results proved for ranked $\C$-algebras are applicable in this setting. Furthermore, it is clear that the restriction of the rank-system to $\mathscr{R}$ not only makes $\mathscr{R}$ a ranked $\C$-algebra but also a ranked complex Banach algebra. 
 
We prove several rank identities for idempotents in abstract ranked rings that are useful in establishing the basic properties of the center-valued rank $\fr _c$. For projections $e, f$ in $\mathscr{R}$, the identity, 
\begin{align*}
\fr _c(e+f) &= \fr _c \big( (1-f)e(1-f) \big) + \fr _c(f)\\
 &= \fr _c(e) + \fr_c \big( (1-e)f(1-e) \big),
\end{align*} helps in establishing the subadditivity of rank, that is, $\fr _c(x+y) \le \fr _c(x) + \fr _c(y)$ for $x, y \in \afr$, and naturally begets the necessary and sufficient conditions for equality (see Theorem \ref{thm:vna_rank_sub}).

As an application, we show a generalized version of Proposition \ref{prop:matrix_rank_cochran} in the setting of von Neumann regular rings and, as a corollary, in the setting of Murray-von von Neumann algebras.
\vskip 0.1in

\noindent {\bf Corollary \ref{cor:cochran_vNreg} }
\textsl{
Let $\mathscr{R}$ be a finite von Neumann algebra. Let $a_1, \ldots, a_n$ be elements of $\afr$ and $e$ be an idempotent in $\afr$ such that $\sum_{i=1}^n a_i = e$, and $\sum_{i=1}^n \fr _c(a_i) = \fr _c(e)$. Then the $a_i$'s are mutually orthogonal idempotents, that is, $a_i a_j  = \delta_{ij}a_i, 1\le i, j \le n$.
}
\vskip 0.1in

Using the fact that the rank of an idempotent in $\mathscr{R}$ is equal to the trace of the idempotent, we prove Theorem \ref{thm:sum_of_idem_fva} mentioned below. Viewed in isolation, note that the statement of Theorem \ref{thm:sum_of_idem_fva} does not involve the rank $\fr _c$.
\vskip 0.1in

\noindent {\bf Theorem \ref{thm:sum_of_idem_fva} } 
\textsl{
Let $\mathscr{R}$ be a finite von Neumann algebra. For idempotents $e_1, e_2, \ldots, e_n$ in $\mathscr{R}$, the operator $e_1 + e_2 + \cdots + e_n$ is an idempotent if and only if the $e_i$'s are mutually orthogonal, that is, $e_i e_j = \delta_{ij} e_i, 1\le i, j \le n$.
}
\vskip 0.1in

The above result is not true in general for von Neumann algebras that are not finite. In \cite[Example 3.1]{bart_zero_sum}, an example is shown of five idempotents acting on an infinite-dimensional separable complex Hilbert space that are not mutually orthogonal but whose sum is equal to $0$, which is an idempotent. If we replace `idempotents' with `projections', which are self-adjoint idempotents, the result is well-known to hold in all $C^*$-algebras.

In \cite[\S 5]{nayak_matrix_mvna}, as an application of rank identities, the present author has shown that if $p, q$ in $\afr$ satisfy the Heisenberg commutation relation, $qp-pq = i1$, then the respective point spectra of $p$ and $q$ must be empty, that is, $p$ and $q$ must be invertible elements in $\afr$. The existence of such a pair of elements in $\afr$ is, to date, an open question. This conundrum (the so-called Heisenberg-von Neumann puzzle) may be recast in the following manner - Are there invertible operators $p, a$ in $\afr$ such that $p^{-1}ap = 1+a$? Or equivalently, is there an invertible operator $a$ in $\afr$ such that $a$ and $1+a$ belong to the same conjugacy class of $\afr$? This suggests that any strategy towards its resolution must involve the study of similarity orbits (and related invariants) in $\afr$ in an essential way.

\subsection{Organization of the article}
In \S \ref{sec:prelim}, we briefly review basic concepts and results from multiset theory, ring theory and the theory of von Neumann algebras that are relevant to our dicussion. In \S \ref{sec:lattice}, we discuss basic concepts in lattice theory as preparation for our applications in the context of the divisibility lattice of elementary divisor domains. In \S \ref{sec:l_functor}, we define the functor $\fU \fR$ from the category of rings to the category of commutative monoids.\ In \S \ref{sec:ranked_rings}, we define the notion of rank-systems and ranked rings, and study some examples. In \S \ref{sec:L_bezout}, we completely characterize the $\fU \fR$-monoids of elementary divisor domains and Murray-von Neumann algebras. In \S \ref{sec:rank_id}, we prove several abstract rank identities in ranked rings, many of which involve idempotents. In the final section \S \ref{sec:app}, we discuss some applications of the theory developed.

\section{Preliminaries}
\label{sec:prelim}

In this section, we set up the basic notation to be used throughout the article and briefly review the relevant concepts and results from multiset theory, ring theory and the theory of von Neumann algebras. 

\begin{notation}
We use the following notation.\\
$\mathbb{N} := $ set of natural numbers,\\
$\mathbb{Z} := $ set of integers,\\
$\mathbb{R} := $ set of real numbers,\\
$\mathbb{C} := $ set of complex numbers,\\
$\mathbb{Z}^+ := \mathbb{N} \cup \{ 0 \}$,\\
$\R ^{+} := $ set of non-negative real numbers,\\
For $n \in \mathbb{N}$, $\langle n \rangle := \{1, 2, \ldots, n \}$.

We often use $\sR$ to denote a ring, and $\sM$ to denote a commutative monoid. The ring of $n \times n$ matrices over $\sR$ is denoted by $M_n(\sR)$. The identity matrix in $M_n(\sR)$ is denoted by ${\bf 1}_n$ and the zero matrix is denoted by ${\bf 0}_n$. For matrices $X \in M_k(\sR), Y \in M_{\ell}(\sR)$, we define $X \odot Y := \mathrm{diag}(X, Y)$ in $M_{k+\ell}(\sR)$. The set of finite square matrices with entries from $\sR$ is denoted by $M_{\mathrm{fin}}(\sR) := \sqcup_{i \in \mathbb{N} } M_i(\sR)$.

We use $\K$ to denote a field. The polynomial ring in one variable over $\K$ is denoted by $\K[t]$, and the free associative algebra over $\K$ with $n$ generators is denoted by $\K \langle t_1, \ldots, t_n \rangle$. The ring of entire functions on $\C$ is denoted by $\sO (\C)$.
\end{notation}

\subsection{Multiset Theory}

A {\it multiset} is a collection of objects in which elements may occur more than once but finitely many times. In \cite[\S 3]	{blizard}, multisets are modeled by $\mathbb{N}$-valued functions on sets counting the multiplicity of each element. We use the language of multisets because of the convenience it affords. For instance, by the fundamental theorem of algebra, the zeroes of a polynomial of degree $d$ over $\mathbb{C}$ form a multiset of cardinality $d$. The primes in the prime factorization of a natural number ($n = p_1 ^{\alpha_1} \cdots p_k ^{\alpha_k}$) form a multiset of cardinality $\alpha_1 + \cdots + \alpha_k$. 

The underlying set of a multiset $A$ is called the {\it support} of $A$ and denoted by $\mathscr{S}(A)$. Let $A, B$ be multisets with multiplicity functions $m_A, m_B$.
\begin{itemize}
\item[(i)] (Inclusion) $A$ is a multisubset of $B$ if $\mathscr{S}(A) \subseteq \mathscr{S}(B)$ and $m_A(x) \le m_B(x)$ for $x \in \mathscr{S}(A)$.
\item[(ii)] (Intersection) The intersection of $A$ and $B$ is the multiset $C$ such that $\mathscr{S}(C) = \mathscr{S}(A) \cap \mathscr{S}(B)$ and $m_C(x) = \min \{ m_A(x), m_B(x) \}$ for $x \in \mathscr{S}(C).$
\item[(iii)] (Union) The union of $A$ and $B$ is the multiset $C$ such that $\mathscr{S}(C) = \mathscr{S}(A) \cup \mathscr{S}(B)$ and $m_C(x) = \max \{ m_A(x), m_B(x) \}$ for $x \in \mathscr{S}(C)$ where it is understood that $m_A(x) = 0$ if $x \notin \mathscr{S}(A)$ and $m_B(x) = 0$ if $x \notin \mathscr{S}(B)$. 
\item[(iv)] (Sum) The sum of $A$ and $B$ is the multiset $C$ such that $\mathscr{S}(C) = \mathscr{S}(A) \cup \mathscr{S}(B)$ and $m_C(x) = m_A(x) +  m_B(x)$ for $x \in \mathscr{S}(C)$ where it is understood that $m_A(x) = 0$ if $x \notin \mathscr{S}(A)$ and $m_B(x) = 0$ if $x \notin \mathscr{S}(B)$. 
\end{itemize}
The sum of multisets may be viewed as the multiset version of the notion of disjoint union for sets.

\subsection{Rings}
For us, rings always contain a multiplicative identity, denoted by $1$. 

\begin{definition}
Let $\mathcal{R}$ be a ring. An element $u$ of $\mathcal{R}$ is said to be {\it left-invertible} if there is an element $v$ of $\mathcal{R}$ such that $vu = 1$. Similarly if there is a $v$ in $\mathcal{R}$ such that $uv = 1$, then $u$ is said to be {\it right-invertible}. An element of $\mathcal{R}$ which is both left-invertible and right-invertible is said to be {\it invertible}. 
\end{definition}

An invertible element in a ring has a unique left-inverse and a unique right-inverse which are identical. In summary, an invertible element has a unique inverse.

\begin{lem}
\label{lem:invert_matrix}
\textsl{
If $\mathcal{R}$ is commutative, a matrix $A$ in $M_n(\mathcal{R})$ ($n \in \mathbb{N}$) is invertible if and only if $\det (A)$ is invertible in $\mathcal{R}$.}
\end{lem}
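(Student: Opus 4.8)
The plan is to prove both implications using two classical facts about determinants over a commutative ring: the multiplicativity of the determinant, $\det(AB) = \det(A)\det(B)$, and the adjugate (Cramer) identity $A\,\mathrm{adj}(A) = \mathrm{adj}(A)\,A = \det(A)\,I$, where $\mathrm{adj}(A)$ denotes the matrix of signed cofactors (the classical adjoint). Both make essential use of the commutativity of $\mathcal{R}$.

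For the forward direction, I would suppose $A$ is invertible with inverse $B$, so that $AB = BA = I$. Applying the determinant to $AB = I$ and using multiplicativity gives $\det(A)\det(B) = \det(I) = 1$; since $\mathcal{R}$ is commutative, this exhibits $\det(B)$ as a two-sided inverse of $\det(A)$, whence $\det(A)$ is invertible in $\mathcal{R}$.

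For the reverse direction, I would suppose $\det(A)$ is invertible in $\mathcal{R}$ and set $B := \det(A)^{-1}\,\mathrm{adj}(A)$. The adjugate identity then yields $AB = \det(A)^{-1}\,A\,\mathrm{adj}(A) = \det(A)^{-1}\det(A)\,I = I$, and symmetrically $BA = I$, so that $A$ is invertible with inverse $B$.

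The only real obstacle is justifying the two determinant identities in the generality of an arbitrary commutative ring, where one cannot appeal to field-theoretic tools such as row reduction or invertibility of nonzero scalars. The standard remedy is the \emph{permanence of polynomial identities}: both $\det(XY) = \det(X)\det(Y)$ and $X\,\mathrm{adj}(X) = \det(X)\,I$ are identities between polynomials with integer coefficients in the matrix entries, regarded as indeterminates. These identities hold in the polynomial ring $\mathbb{Z}[x_{ij}, y_{ij}]$ over $\mathbb{Z}$ (indeed over its fraction field, by the classical theory, and hence over the integral domain itself). Since an identity valid in $\mathbb{Z}[\text{indeterminates}]$ is preserved under any ring homomorphism, specializing the indeterminates to the actual entries of $A$ (and $B$) drawn from $\mathcal{R}$ transfers both identities to $M_n(\mathcal{R})$, which completes the argument.
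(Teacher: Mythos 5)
Your proof is correct. Note that the paper itself states this lemma \emph{without} proof---it is recorded as a classical fact about matrices over commutative rings, used later (e.g.\ in Lemma \ref{lem:ufd_fund}) to certify invertibility of explicit $2\times 2$ matrices---so there is no argument in the paper to compare yours against; your proposal simply fills in the standard justification. Both directions are the classical ones: multiplicativity of $\det$ exhibits $\det(B)$ as a two-sided inverse of $\det(A)$, and the adjugate identity produces the explicit inverse $B = \det(A)^{-1}\,\mathrm{adj}(A)$. Your care in justifying the two determinant identities over an arbitrary commutative ring via permanence of polynomial identities (they hold in $\mathbb{Z}[x_{ij}, y_{ij}]$, hence transfer along the specialization homomorphism into $\mathcal{R}$, since $\det$ and $\mathrm{adj}$ are given by universal integer polynomial formulas and thus commute with ring homomorphisms) is exactly the right way to avoid illegitimate appeals to field-specific tools such as row reduction. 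One could alternatively verify both identities directly from the Leibniz expansion, which is valid verbatim over any commutative ring, but your route is equally rigorous and arguably cleaner.
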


\begin{lem}
\label{lem:invertible_M2}
For $x, y$ in $\mathcal{R}$, we have 
\begin{itemize}
\item[(i)] $\begin{bmatrix}
1 & x\\
0 & 1
\end{bmatrix}^{-1} = 
\begin{bmatrix}
1 & -x\\
0 & 1
\end{bmatrix}$, $\begin{bmatrix}
1 & 0\\
x & 1
\end{bmatrix}^{-1} = 
\begin{bmatrix}
1 & 0\\
-x & 1
\end{bmatrix},$
\item[(ii)] $\begin{bmatrix}
x & 1\\
1 & 0
\end{bmatrix}^{-1} = 
\begin{bmatrix}
0 & 1\\
1 & -x
\end{bmatrix}$, $\begin{bmatrix}
0 & 1\\
1 & x
\end{bmatrix}^{-1} = 
\begin{bmatrix}
-x & 1\\
1 & 0
\end{bmatrix},$
\item[(iii)] $\begin{bmatrix}
x & -1 \\
1 & 0
\end{bmatrix}
\begin{bmatrix}
1 & y \\
x & 0
\end{bmatrix}
\begin{bmatrix}
-y & 1\\
1 & 0
\end{bmatrix} = 
\begin{bmatrix}
xy & 0\\
0 & 1
\end{bmatrix}.$
\end{itemize}

\end{lem}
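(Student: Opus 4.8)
The plan is to prove all three statements by direct matrix multiplication, taking care throughout to respect the order of factors, since $\mathcal{R}$ is an arbitrary unital ring and is \emph{not} assumed commutative here (in contrast to Lemma~\ref{lem:invert_matrix}). The only arithmetic that enters is multiplication by the identity $1$ and additive cancellations of the form $x - x = 0$, and neither of these requires commutativity; this is precisely why these identities persist over a general ring. For the invertibility claims in (i) and (ii), I would emphasize that over a noncommutative ring a one-sided inverse need not be two-sided, so to justify writing $A^{-1} = B$ I would verify both products $AB = I_2$ and $BA = I_2$.

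First I would dispatch (i): for the upper-triangular pair, multiplying $\begin{bmatrix} 1 & x \\ 0 & 1 \end{bmatrix}$ against $\begin{bmatrix} 1 & -x \\ 0 & 1 \end{bmatrix}$ leaves the diagonal entries equal to $1$ and produces the off-diagonal entry $x + (-x) = 0$ in one order and $(-x) + x = 0$ in the other, so both products equal $I_2$; the lower-triangular pair is identical after transposing roles. Part (ii) is handled the same way: expanding $\begin{bmatrix} x & 1 \\ 1 & 0 \end{bmatrix}\begin{bmatrix} 0 & 1 \\ 1 & -x \end{bmatrix}$ gives off-diagonal contribution $x\cdot 1 + 1\cdot(-x) = 0$ and diagonal entries $1$, and reversing the order yields the same, with the companion identity $\begin{bmatrix} 0 & 1 \\ 1 & x \end{bmatrix}^{-1} = \begin{bmatrix} -x & 1 \\ 1 & 0 \end{bmatrix}$ treated analogously.

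For part (iii) I would evaluate the triple product from the left. Multiplying the first two factors gives the intermediate matrix $\begin{bmatrix} x\cdot 1 + (-1)\cdot x & xy \\ 1 & y \end{bmatrix} = \begin{bmatrix} 0 & xy \\ 1 & y \end{bmatrix}$, where the top-left entry collapses by cancellation and the top-right entry $xy$ is recorded in its correct order. Multiplying this on the right by $\begin{bmatrix} -y & 1 \\ 1 & 0 \end{bmatrix}$ then produces top row $(xy,\, 0)$ and bottom row $(-y + y,\, 1) = (0,\, 1)$, giving $\begin{bmatrix} xy & 0 \\ 0 & 1 \end{bmatrix}$ as claimed. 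There is essentially no genuine obstacle: the statement is a routine but useful catalogue of elementary $2\times 2$ facts. The one point meriting vigilance is the noncommutativity of $\mathcal{R}$ — one must carry products such as $xy$ in their stated order and never silently invoke $xy = yx$; the bookkeeping above is arranged so that only cancellations $x - x = 0$ (which hold unconditionally) are ever used.
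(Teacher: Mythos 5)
Your proposal is correct and takes the same approach as the paper, which simply verifies these identities by direct matrix multiplication; your care in checking both orders of the products (since one-sided inverses need not be two-sided over a noncommutative ring) and in preserving the order of factors like $xy$ is a sound elaboration of the same computation.
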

\begin{proof}
The results follow from straightforward matrix multiplication computations.
\end{proof}

In this article, our discussion involves various kinds of rings and we note their definitions below for quick reference.

\begin{definition}
\begin{itemize}
\item[(i)] An {\it integral domain} is a commutative ring in which the product of two non-zero elements is non-zero.
\item[(ii)] A {\it GCD domain} is an integral domain in which any two elements have a greatest common divisor. In other words, for $x, y$ in a GCD domain $\mathcal{R}$, there is a $z$ in $\mathcal{R}$ such that for $w \in R$ if $w \mid x, w \mid y$, then $w \mid z$.
\item[(iii)] A {\it B\'e{}zout domain} is an integral domain in which the sum of two principal ideals is a principal ideal.
\item[(iv)] A {\it unique factorization domain} (UFD) is an integral domain in which every non-zero element can be uniquely written as a product of irreducible elements upto order and associates.
\item[(v)] A {\it principal ideal domain}  (PID) is an integral domain in which every ideal is a principal ideal.
\item[(vi)] An integral domain $\sR$ is said to be an {\it elementary divisor domain} (EDD) $\sR$ if for every matrix $A$ (not necessarily square) with entries from $\sR$, there exist invertible square matrices $P$ and $Q$ such that $PAQ$ is a diagonal matrix whose $i^{\textrm{th}}$ diagonal entry divides the $(i+1)^{\textrm{th}}$ diagonal entry.
\end{itemize}
\end{definition}
We have the following class inclusions :\\
PIDs $\subset$ EDDs $\subset$ B\'e{}zout domains $\subset$ GCD domains $\subset$ integral domains $\subset$ commutative rings.

\subsection{Von Neumann regular rings and rank-functions}

In this subsection, we recall some basic concepts and pertinent results from the theory of von Neumann regular rings.

\begin{definition}
A {\it von Neumann regular ring} is a unital ring $\sR$ such that for every element $x \in \sR$, there is an element $y \in \sR$ such that $xyx=x$.
\end{definition}

The ring of affiliated operators corresponding to a finite von Neumann algebra is an important example of a von Neumann regular ring. 

\begin{thm}[{see \cite[Theorem 1.1]{goodearl}}]
\textsl{
A unital ring $\sR$ is von Neumann regular if and only if every principal right (left) ideal of $\sR$ is generated by an idempotent in $\sR$.
}
\end{thm}

\begin{lem}[{see \cite[Lemma 1.6]{goodearl}}]
\textsl{
Let $e_1, \ldots, e_n$ be idempotents in a ring $\sR$ such that $e_1 + \cdots + e_n = 1$. Then $\sR$ is von Neumann regular if and only if for every $x \in e_i \sR e_j$, there is $y \in e_j \sR e_i$ such that $xyx = x$. As a consequence, we have the following assertions.
\begin{itemize}
    \item[(i)] If $\sR$ is a von Neumann regular ring, then for every idempotent $e \in \sR$, the ring $e \sR e$ (with identity $e$) is von Neumann regular.
    \item[(ii)] If $\sR$ is a von Neumann regular ring, then for every $n \in \N$, the matrix ring $M_n(\sR)$ is von Neumann regular.
\end{itemize}
}
\end{lem}

\begin{definition}[{cf. \cite[pg. 231]{goodearl}}]
\label{def:rank-ring}
A {\it rank-function} on a von Neumann regular ring $\sR$ is a map $\fr  : \sR \to \R^{+}$ such that:
\begin{itemize}
\item[(i)] $\fr (1) > 0$;
\item[(ii)] $\fr (xy) \le \fr (x)$, and $\fr (xy) \le \fr (y)$ for all $x, y \in \sR$;
\item[(iii)] $\fr (e+f) = \fr (e) + \fr (f)$ for every pair of mutually orthogonal idempotents $e, f \in \sR$.
\end{itemize}
The rank-function $\fr $ is said to be normalized to $\fr (1)$. If $\fr (x) > 0$ whenever $x \ne 0$, we say that $\fr $ is a {\it non-degenerate} rank-function on $\sR$. 

Note that a non-negative scaling of a rank-function is again a rank-function, and a positive scaling of a non-degenerate rank-function is again a non-degenerate rank-function.
\end{definition}

In the literature, what we have defined as a rank-function is referred to as a {\it pseudo-rank function}, and what we have defined as a nondegenerate rank-function is referred to as a rank-function. Furthermore, it is standard to stipulate that the rank-function be normalized to $1$, that is, $\fr (1) = 1$. Our departure from the convention allows for more flexibility and compatibility with the language used in this article, without affecting the results we invoke from the literature in this context.

In \S \ref{subsec:rank_vNreg}, the theorem mentioned below plays a pivotal role in understanding the natural one-to-one correspondence between rank-functions on $\sR$ and $\R ^{+}$-valued rank-systems on $\sR$.
\begin{thm}[{see \cite[Corollary 16.10]{goodearl}}]
\label{thm:pushout_rank}
\textsl{
Let $\sR$ be a von Neumann regular ring, and $n$ be a positive integer. Let $\delta _n : \sR \to M_n(\sR)$ be the ring homomorphism given by $x \mapsto \odot _{i=1}^n x$. Then the mapping, $\delta _n ^*$, from the set of rank-functions on $M_n(\sR)$ to the set of rank-functions on $\sR$ given by $\delta_n ^*(\fr _n) = \frac{1}{n}(\fr _n \circ \delta _n)$, is bijective.
}
\end{thm}

\subsection{Operator Algebras}
For the discussion on operator algebras below, we primarily follow \cite{kadison-ringrose1}, \cite{kadison-ringrose2}, \cite{nayak_mvna}. Let $\mathscr{H}$ be a complex Hilbert space and $\mathcal{B}(\mathscr{H})$ denote the set of bounded operators on $\mathscr{H}$.

\begin{definition}
A self-adjoint idempotent in $\mathcal{B}(\mathscr{H})$ is said to be a projection. There is a one-to-one correspondence between the set of closed subspaces of $\mathscr{H}$ and the set of projections in $\mathcal{B}(\mathscr{H})$. For an operator $A$ in $\mathcal{B}(\mathscr{H})$, the set $\{ Ax : x \in \mathscr{H} \} \subseteq \mathscr{H}$ is called the {\it range} of $A$. The projection onto the closure of the range of $A$ is said to be the {\it range projection} of $A$ and denoted by $R(A)$. The set $\{ x : Ax = 0, x \in \mathscr{H} \} \subseteq \mathscr{H}$ is called the {\it null space} of $A$. The projection onto the null space is said to be the {\it null projection} of $A$ and denoted by $N(A)$. 
\end{definition}

A $*$-subalgebra of $\mathcal{B}(\mathscr{H})$ containing the identity operator and closed in the weak-operator topology is said to be a {\it von Neumann algebra}. The set of projections in a von Neumann algebra has a natural order structure based on the cone of positive operators. With this order structure, the set of projections is a complete lattice.

\begin{lem}[see {\cite[Proposition 2.5.14]{kadison-ringrose1}}]
\label{lem:range_proj_id}
\textsl{
For projections $E, F$ in $\mathcal{B}(\mathscr{H})$, we have 
\begin{itemize}
\item[(i)] $I - E \wedge F = (I-E) \vee (I-F),$
\item[(ii)] $I - E \vee F = (I-E) \wedge (I-F)$,
\item[(iii)] $R(E+F) = E \vee F,$
\item[(iv)] $R(EF) = E - \big( E \wedge (I-F)\big).$
\end{itemize}
}
\end{lem}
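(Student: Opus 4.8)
The final statement to prove is Lemma~\ref{lem:range_proj_id}, the four range-projection identities for projections $E, F$ in $\mathcal{B}(\mathscr{H})$.

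\bigskip

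\noindent\textbf{Proof proposal.} The plan is to prove parts (i) and (ii) first as pure lattice-theoretic / orthogonal-complement facts, then derive (iii) and (iv) from the geometry of ranges and null spaces together with (i), (ii). For (i) and (ii), I would work with the closed subspaces corresponding to the projections. Under the one-to-one correspondence between projections and closed subspaces, $E \wedge F$ corresponds to the intersection $\mathcal{M}_E \cap \mathcal{M}_F$ and $E \vee F$ to the closed linear span $\overline{\mathcal{M}_E + \mathcal{M}_F}$, while $I - E$ corresponds to the orthogonal complement $\mathcal{M}_E^{\perp}$. Then (i) is the statement $(\mathcal{M}_E \cap \mathcal{M}_F)^{\perp} = \overline{\mathcal{M}_E^{\perp} + \mathcal{M}_F^{\perp}}$, which is the standard De Morgan law for closed subspaces of a Hilbert space, proved by taking orthogonal complements and using $(\mathcal{N}^{\perp})^{\perp} = \mathcal{N}$ for closed $\mathcal{N}$. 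Part (ii) follows by applying (i) to the projections $I-E$ and $I-F$ in place of $E, F$, or equivalently by taking complements throughout (i).

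\bigskip

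\noindent For (iii), I would argue at the level of ranges: the range of $E + F$ is contained in $\mathcal{M}_E + \mathcal{M}_F$, so $R(E+F) \le E \vee F$. Conversely, every vector in $\mathcal{M}_E$ lies in the range of $E + F$ restricted appropriately — more carefully, since $E = E(E+F)\big|$ on a suitable invariant piece is delicate, so instead I would use the cleaner observation that $R(E+F)$ dominates both $E$ and $F$: because $E + F \ge E \ge 0$ and $E + F \ge F \ge 0$, one gets $R(E+F) \ge R(E) = E$ and likewise $\ge F$, hence $R(E+F) \ge E \vee F$. Combining the two inequalities gives equality. (The monotonicity of range projections under the order on positive operators, $0 \le A \le B \Rightarrow R(A) \le R(B)$, is the key input here.)

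\bigskip

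\noindent For (iv), I would compute the null space of $EF$ and take the orthogonal complement to get the range projection of $(EF)^{*} = FE$, then relate the two. The cleanest route: a vector $x$ satisfies $EFx = 0$ iff $Fx \in \mathcal{M}_E^{\perp} = \mathcal{M}_{I-E}$. Decomposing via $F$, one finds $N(EF]$ and, passing to range projections through $R(EF) = I - N((EF)^{*}]$ and the relations in (i)--(ii), arrives at $R(EF) = E - (E \wedge (I - F))$. Alternatively, since $EF$ and $E(EF) = EF$ share a range and $R(EF) \le E$, the range of $EF$ is the part of $\mathcal{M}_E$ not killed by $F$, i.e.\ $\mathcal{M}_E \ominus (\mathcal{M}_E \cap \mathcal{M}_F^{\perp})$, which translates exactly to $E - (E \wedge (I-F))$ using (i), (ii).

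\bigskip

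\noindent\textbf{Main obstacle.} I expect the subtle point to be the closure issues in (iii) and (iv): sums of closed subspaces need not be closed, so one must be careful to phrase $E \vee F$ as the projection onto the \emph{closure} $\overline{\mathcal{M}_E + \mathcal{M}_F}$ and to verify that $R(EF)$, defined as the projection onto the closure of the range, really coincides with the lattice expression without spurious limit points. The order-monotonicity argument for (iii) sidesteps this elegantly, and for (iv) the orthogonal-complement bookkeeping via (i), (ii) keeps everything within the closed-subspace lattice. Since this is quoted as \cite[Proposition 2.5.14]{kadison-ringrose1}, the cleanest presentation simply cites the reference, but the proof itself reduces to the De Morgan law for complements plus monotonicity of range projections.
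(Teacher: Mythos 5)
Your proposal is correct, but note that the paper does not actually prove this lemma: it is quoted verbatim from \cite[Proposition 2.5.14]{kadison-ringrose1} with no argument supplied, so there is no in-paper proof to compare against --- your write-up is a legitimate filling-in of the omitted standard argument. Parts (i) and (ii) via the De Morgan law $(\mathcal{M}_E \cap \mathcal{M}_F)^{\perp} = \overline{\mathcal{M}_E^{\perp} + \mathcal{M}_F^{\perp}}$ and the substitution $E \mapsto I-E$, $F \mapsto I-F$ are exactly right, and your part (iii) is clean: the inclusion $\mathrm{ran}(E+F) \subseteq \mathcal{M}_E + \mathcal{M}_F$ gives $R(E+F) \le E \vee F$, while the monotonicity $0 \le A \le B \Rightarrow R(A) \le R(B)$ (which the paper itself proves and uses later, in Proposition \ref{prop:rank_prop}(iv), via $N(B) \le N(A)$ and $R(A) = I - N(A)$ for positive operators) gives $R(E+F) \ge E$ and $R(E+F) \ge F$, hence the reverse inequality. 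The only loose spot is in (iv): your first route says to compute $N(EF)$ and pass to range projections, but $I - N(EF)$ is $R((EF)^*) = R(FE)$, not $R(EF)$; to land on $R(EF) = E - E\wedge(I-F)$ you must either compute $N(FE) = \mathcal{M}_E^{\perp} \oplus (\mathcal{M}_E \cap \mathcal{M}_F^{\perp})$ and use $R(EF) = I - N(FE)$, or compute $N(EF)$, obtain $R(FE) = F - F\wedge(I-E)$, and then interchange $E$ and $F$. Your phrase ``then relate the two'' covers this, and in any case your alternative argument --- that $\overline{E(\mathcal{M}_F)}$ is the orthogonal complement inside $\mathcal{M}_E$ of $\mathcal{M}_E \cap \mathcal{M}_F^{\perp}$, verified by the computation $\langle y, EFx\rangle = \langle y, Fx\rangle$ for $y \in \mathcal{M}_E$ --- is exact and closes the gap, so the proof stands as written.
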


We say that a von Neumann algebra $\mathscr{R}$ is {\it finite} if every isometry in $\mathscr{R}$ is a unitary, that is, if $V^*V = I$ for $V \in \mathscr{R}$, then $VV^* = I$. Finite von Neumann algebras are characterized by the existence of a unique faithful and normal center-valued trace (see \cite[Theorem 8.2.8]{kadison-ringrose2}), which is often referred to as `the' trace on $\mathscr{R}$. 

Let $\mathscr{R}$ be a finite von Neumann algebra. We denote the set of closed densely-defined operators affiliated with $\mathscr{R}$ by $\afr$. A concise account of the theory of unbounded operators may be found in \cite[\S 4]{kadison-liu}. For a more thorough account, the interested reader may refer to \S 2.7, \S 5.6 in \cite{kadison-ringrose1}, or Chapter VIII in \cite{simon-reed}.

From \cite{nayak_mvna}, $\afr$ naturally has the structure of a unital ordered complex topological $*$-algebra and is called the Murray-von Neumann algebra associated with $\mathscr{R}$. We denote the addition in $\afr$ by $\afrsum$, and the multiplication by $\afrprod$. The restriction of $\afrsum, \afrprod$ to $\mathscr{R}$ is the usual operator addition and operator multiplication in $\mathscr{R}$. On the algebraic side, $\afr$ may be viewed as the Ore localization of $\mathscr{R}$ with respect to the multiplicative subset of its non-zero-divisors. When the discussion involves only the algebraic structure of $\afr$, we prefer to use $+, \cdot$, respectively, instead of $\afrsum, \afrprod$, respectively.

\begin{remark}
\label{rmrk:ran_proj}
Let $\mathscr{R}$ be a finite von Neumann algebra acting on the Hilbert space $\mathscr{H}$. For an operator $A$ in $\afr$, the range projection $R(A)$ and the null projection $N(A)$ are both in $\mathscr{R}$. If $A$ is self-adjoint, then $A \afrprod R(A) = R(A) \afrprod A = A.$
\end{remark}

\begin{notation}
We denote  Murray-von Neumann equivalence relation for projections of a von Neumann algebra by $\sim_{MV}$ (see \cite[Chapter 6]{kadison-ringrose2}).
\end{notation}
In the lemma below, we note some results relating the range projection and null projection of various operators in $\afr$ which is useful for our discussion in \S \ref{subsec:center_valued}. 
\begin{lem}[{\cite[Proposition 2.5.13]{kadison-ringrose1} }, {\cite[Proposition 6.1.6]{kadison-ringrose2}}]
\label{lem:range_proj_rel}
\textsl{
Let $\mathscr{R}$ be a finite von Neumann algebra. For operators $A,  B$ in $\afr$, we have
\begin{itemize}
\item[(i)] $R(A) = I- N(A^*)$,
\item[(ii)] $N(A) = I - R(A^*)$,
\item[(iii)] $R(A \afrprod A^*) = R(A) \sim_{MV} R(A^*) = R(A^* \afrprod A)$,
\item[(iv)] $R(A \afrprod B) = R \big( A \afrprod R(B) \big)$,
\item[(v)] $R \big( R(A) \big) = R(A)$.
\end{itemize}
}
\end{lem}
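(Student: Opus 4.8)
The plan is to treat the five statements in the order (i), (ii), (v), (iii), (iv), building on the orthogonality between the closure of the range of an operator and the null space of its adjoint, on the polar decomposition available in $\mathscr{R}_{\textrm{aff}}$, and finally reducing the product statement (iv) back to (i).

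First, for (i), I would invoke the standard fact that for a closed densely-defined operator $A$ the closure of the range of $A$ is the orthogonal complement of the null space of $A^*$, that is $\overline{\mathrm{ran}(A)} = \ker(A^*)^{\perp}$. Passing to the associated projections, and recalling from Remark \ref{rmrk:ran_proj} that $R(A)$ and $N(A^*)$ lie in $\mathscr{R}$, this reads $R(A) = I - N(A^*)$. Statement (ii) then follows immediately by applying (i) to $A^*$ in place of $A$ and using $A^{**} = A$ (valid since $A$ is closed and densely-defined), which gives $R(A^*) = I - N(A)$, i.e. $N(A) = I - R(A^*)$. Statement (v) is equally direct: $R(A)$ is a projection, its range is already closed, and the projection onto the range of a projection is the projection itself, so $R(R(A)) = R(A)$.

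For (iii), I would use the polar decomposition $A = V \afrprod |A|$, where $|A| = (A^* \afrprod A)^{1/2} \in \mathscr{R}_{\textrm{aff}}$ is positive and $V \in \mathscr{R}$ is a partial isometry with $V^* \afrprod V = R(|A|) = R(A^*)$ and $V \afrprod V^* = R(A)$; since $V$ implements the Murray-von Neumann equivalence of its initial and final projections, this yields $R(A^*) \sim R(A)$. To identify the two outer terms, I would show $N(A \afrprod A^*) = N(A^*)$ --- the inclusion $\ker(A^*) \subseteq \ker(A \afrprod A^*)$ is clear, and the reverse follows from $\langle A \afrprod A^* x, x\rangle = \|A^* x\|^2$ --- so that, since $A \afrprod A^*$ is self-adjoint, (i) gives $R(A \afrprod A^*) = I - N(A \afrprod A^*) = I - N(A^*) = R(A)$. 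Replacing $A$ by $A^*$ gives $R(A^* \afrprod A) = R(A^*)$, completing the chain. For (iv), the strategy is again to reduce to (i) by comparing null spaces of adjoints. Writing $P := R(B)$, a self-adjoint projection in $\mathscr{R}$, I would use $\ker(P) = \overline{\mathrm{ran}(B)}^{\perp} = \ker(B^*)$ to obtain $N((A \afrprod P)^*) = N((A \afrprod B)^*)$: indeed $(A \afrprod P)^* = P \afrprod A^*$ and $(A \afrprod B)^* = B^* \afrprod A^*$, and a vector $x$ satisfies $P \afrprod A^* x = 0$ exactly when $A^* x \in \ker(P) = \ker(B^*)$, i.e. exactly when $B^* \afrprod A^* x = 0$. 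Applying (i) to $A \afrprod B$ and to $A \afrprod P$ then forces $R(A \afrprod B) = R(A \afrprod R(B))$.

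The main obstacle I anticipate is not the Hilbert-space heuristics, which are classical and are precisely the cited results \cite[Proposition 2.5.13]{kadison-ringrose1} and \cite[Proposition 6.1.6]{kadison-ringrose2} in the bounded case, but rather the careful bookkeeping of domains when the operators are merely affiliated with $\mathscr{R}$. In particular the products $A \afrprod A^*$, $A \afrprod B$, and $P \afrprod A^*$ are strong (closed) products in $\mathscr{R}_{\textrm{aff}}$, so before comparing kernels I must confirm that the adjoint and null-space computations are carried out on the closed operators produced by $\afrprod$ rather than on the raw compositions. Multiplication by the bounded projection $P$ is harmless, but the identity $(A \afrprod B)^* = B^* \afrprod A^*$ and the self-adjointness of $A \afrprod A^*$ must be drawn from the $*$-algebra structure of $\mathscr{R}_{\textrm{aff}}$ established in \cite{nayak_mvna}; with those in hand the reductions above go through verbatim.
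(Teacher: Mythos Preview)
The paper does not supply its own proof of this lemma: it is stated with citations to \cite[Proposition 2.5.13]{kadison-ringrose1} and \cite[Proposition 6.1.6]{kadison-ringrose2} and no argument is given in the text. Your proposal is correct and is precisely the standard argument underlying those cited results, suitably adapted to the affiliated setting; the domain bookkeeping you flag is the only point requiring care, and you have identified the right tools (the $*$-algebra structure of $\mathscr{R}_{\textrm{aff}}$ from \cite{nayak_mvna}) to handle it.
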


\begin{remark}
\label{rmrk:basis_free_range}
Let $\mathscr{R}$ be a finite von Neumann algebra. For a self-adjoint element $A$ in $\afr$, the range projection of $A$ may be intrinsically defined as the smallest projection $E$ in $\mathscr{R}$ such that $E \afrprod A = A$. This definition is based only on the order and algebraic structure of $\afr$ and is compatible with the definition in the context of its represented form. For an arbitrary operator $A$ in $\afr$, the range projection of $A$ may be defined as the range projection of $A \afrprod A^*$. In \S \ref{subsec:center_valued}, we discuss the notion of center-valued rank on a finite von Neumann algebra and its corresponding Murray-von Neumann algebra and we emphasize here that the results do not depend on any particular representation of the von Neumann algebra.
\end{remark}

\begin{remark}
\label{rmrk:mvna_iso}
Let $\mathscr{R}$ be a finite von Neumann algebra. Recall from \cite[Theorem 4.14]{nayak_mvna} that we have $M_n(\afr) \cong M_n(\mathscr{R})_{\textrm{aff}}$ as unital ordered complex topological $*$-algebras. For the purpose of algebraic computations, we implicitly use the description $M_n(\afr)$ as matrix algebras over $\afr$, whereas while defining the center-valued rank in \S \ref{subsec:center_valued}, it is more convenient to use the operator-theoretic description of $M_n(\mathscr{R})_{\textrm{aff}}$ as affiliated operators.
\end{remark}

\section{Lattice Theoretic Considerations}
\label{sec:lattice}

In this section, we review the basic concepts of lattice theory that are necessary for our discussion. For a detailed account, the reader may consult \cite{birkhoff-latt}. We derive some properties of modular maps from a lattice to an abelian semigroup which serves to provide an economical and insightful language to express our results in \S \ref{sec:L_bezout} in the context of the divisibility lattice associated with an elementary divisor domain.

\subsection{Basic Concepts}

A set $A$ with a partial ordering $\le$ is said to be a {\it poset}. If all elements of $A$ are comparable, then $A$ is said to be a {\it chain}. A multiset with support contained in a poset has an obvious partial order inherited from the poset and is called a multiposet. A multiposet all of whose elements are comparable, that is, whose support is contained in a chain, is said to be a {\it multichain}.

For a subset $H \subseteq A$, we say that $a$ is an upper bound of $H$ if $h \le a$ for all $h \in H$. An upper bound $a$ of $H$ is said to be the supremum of $H$ (denoted $\sup H$) if for any upper bound $b$ of $H$, we have $a \le b$. Note that by the antisymmetry of $\le$ there can at most be one supremum of $H$. 

A poset $(\mathcal{L} ; \le)$ is said to be a {\it lattice} if $\sup \{x, y \}, \inf \{ x, y \}$ exist for all $x, y$ in $\mathcal{L}$. There are two fundamental binary operations on a lattice $\mathcal{L}$ called the {\it join}, which is defined as $x \vee y := \sup \{x, y \}$, and the {\it meet}, which is defined as $x \wedge y := \inf \{ x, y \}$, for $x, y$ in $\mathcal{L}$. The join and the meet are commutative, associative and idempotent. For $x, y $ in $\mathcal{L}$, we say that $x$ {\it covers} $y$ if $y \le x$ and if $y \le a \le x$, then $a = y$ or $a = x$.

\begin{definition}
For lattices $\mathcal{L}_1, \mathcal{L}_2$, a map $\Phi : \mathcal{L}_1 \rightarrow \mathcal{L}_2$ is said to be a {\it lattice homomorphism} if $\Phi(x \vee y) = \Phi(x) \vee \Phi(y), \Phi(x \wedge y) = \Phi(x) \wedge \Phi(y)$ for all $x, y$ in $\mathcal{L}_1$.
\end{definition}

A lattice $\mathcal{L}$ is said to be {\it distributive} if for all $x, y, z \in \mathcal{L}$, we have
\begin{itemize}
\item[(i)] $x \wedge(y \vee z) = (x \vee y) \wedge (x \vee z)$,
\item[(ii)] $x \vee (y \wedge z) = (x \vee  y) \wedge (x \vee z)$.
\end{itemize}

\begin{definition}
Let $X := \{ x_1, \ldots, x_n \}$ be a finite multisubset of $\mathcal{L}$. We define $$X_{(k)} := \wedge_{1 \le i_1 < \cdots < i_k \le n} (x_{i_1} \vee \cdots \vee x_{i_k}), \textrm{ for }1\le k \le n.$$
\end{definition}

\begin{remark}
\label{rmrk:order_stat_latt}
For a multisubset $X$ of cardinality $n$ of a lattice $\mathcal{L}$, we have $X_{(1)} \le X_{(2)} \le \cdots \le X_{(n)}$. If $X$ is a multichain, then $X_{(k)}$ is the $k$-th smallest element in $X$. It is straighforward from the definitions that for a lattice homomorphism $\Phi : \mathcal{L} \rightarrow \mathcal{M}$, we have $\Phi(X)_{(k)} = \Phi(X_{(k)})$ for $1 \le k \le n$.
\end{remark}

\subsection{Modular Maps}

In this subsection, $(\mathcal{L}; \vee, \wedge)$ is a lattice with join $\vee$ and meet $\wedge$. Let $(S; +)$ be an abelian semigroup with the binary operation $+$. 
\begin{definition}
A map $\phi : \mathcal{L} \rightarrow S$ is said to be {\it modular} if $\phi(x) + \phi(y) = \phi(x \wedge y) + \phi(x \vee y)$ for all $x, y $ in $\mathcal{L}$.
\end{definition}

A modular map from $\mathcal{L}$ to $\mathbb{R}$ is commonly known as a {\it valuation} on $\mathcal{L}$ in the lattice theory literature (\cite[pg. 74]{birkhoff-latt}). In Proposition \ref{prop:lat_ord} and Theorem \ref{thm:m_chain_rep}, $\phi$ denotes a modular map from $\mathcal{L}$ to $S$.

\begin{prop}
\label{prop:lat_ord}
\textsl{
Let $x_1 \le \cdots \le x_n$ be a multichain with support contained in $\mathcal{L}$ and $y$ be an element of $\mathcal{L}$. Let $y_1 := y \wedge x_1$, $y_i := (y \vee x_{i-1}) \wedge x_i$ for $2 \le i \le n$, and $y_{n+1} := y \vee x_n$. Then we have
\begin{itemize}
\item[(i)] $y_1 \le y_2 \le \cdots \le y_{n+1} $,
\item[(ii)] $\phi(y) + \sum_{i=1}^n \phi(x_i) = \sum_{i=1}^{n+1} \phi(y_i)$.
\end{itemize} 
}
\end{prop}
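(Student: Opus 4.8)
The plan is to handle the two parts separately: part (i) is a purely order-theoretic verification using only the defining properties of meet and join, while part (ii) follows by induction on $n$, whose sole ingredient is the modularity of $\phi$. Neither part requires any structure on $S$ beyond commutativity and associativity, and recognizing this is the crux of the argument.

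For part (i), I would establish $y_i \le y_{i+1}$ directly, case by case. The uniform estimate is that $y_i \le x_i \le x_{i+1}$ and $y_i \le y \vee x_{i-1} \le y \vee x_i$, so $y_i$ is a lower bound for both $x_{i+1}$ and $y \vee x_i$, whence $y_i \le (y \vee x_i) \wedge x_{i+1} = y_{i+1}$ for $2 \le i \le n-1$. The boundary indices $i = 1$ and $i = n$ are handled by the same type of estimate, using $y_1 = y \wedge x_1$ and $y_{n+1} = y \vee x_n$; these only require monotonicity of $\vee, \wedge$ and the inequalities $a \wedge b \le a, b$ and $a, b \le a \vee b$.

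For part (ii), I would induct on $n$. The base case $n = 1$ reads $\phi(y) + \phi(x_1) = \phi(y \wedge x_1) + \phi(y \vee x_1)$, which is exactly modularity of $\phi$ applied to $y$ and $x_1$. For the inductive step, I would apply the hypothesis to the shorter multichain $x_1 \le \cdots \le x_{n-1}$ with the same $y$. Writing $y_1', \ldots, y_n'$ for the associated elements, one checks that $y_i' = y_i$ for $1 \le i \le n-1$, while the top element is $y_n' = y \vee x_{n-1}$. The hypothesis therefore gives
$$\phi(y) + \sum_{i=1}^{n-1}\phi(x_i) = \sum_{i=1}^{n-1}\phi(y_i) + \phi(y \vee x_{n-1}).$$
Adding $\phi(x_n)$ to both sides and applying modularity to the pair $y \vee x_{n-1}$ and $x_n$ — with $(y \vee x_{n-1}) \wedge x_n = y_n$ and $(y \vee x_{n-1}) \vee x_n = y \vee x_n = y_{n+1}$, the latter because $x_{n-1} \le x_n$ — replaces $\phi(y \vee x_{n-1}) + \phi(x_n)$ by $\phi(y_n) + \phi(y_{n+1})$, yielding the desired identity.

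The main obstacle I would flag is that $S$ is only an abelian semigroup, so it admits neither a neutral element nor a cancellation law. The tempting shortcut — applying modularity to each pair $(y \vee x_{i-1}, x_i)$ and summing — produces matching intermediate terms $\phi(y \vee x_i)$ on both sides that one would want to delete, but this deletion is precisely the forbidden cancellation. The inductive formulation sidesteps the issue entirely: at each stage modularity is invoked exactly once, at the top of the chain, so every term is accounted for by associativity and commutativity alone, and no cancellation is ever needed.
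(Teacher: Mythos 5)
Your proof is correct and is essentially the paper's own argument: the paper runs an induction on the prefix statements $P(m):\ \phi(y) + \sum_{i=1}^m \phi(x_i) = \phi(y \vee x_m) + \sum_{i=1}^m \phi(y_i)$, which is exactly your induction hypothesis applied to the truncated chain $x_1 \le \cdots \le x_m$, and the inductive step in both cases is the same single application of modularity to the pair $\bigl(y \vee x_{n-1},\, x_n\bigr)$ together with $x_{n-1} \le x_n$ to collapse the join. The only (cosmetic) difference is that you prove part (i) by a direct monotonicity check while the paper folds the chain $y_1 \le \cdots \le y_m \le y \vee x_m$ into the same induction; your closing remark about cancellation being unavailable in $S$ is precisely the reason the proof must be organized this way.
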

\begin{proof}
With notation as in the statement of the theorem, we inductively prove the following assertions for $m \in \langle n \rangle$, $$P(m) : y_1 \le \cdots \le y_m \le y \vee x_m, \textrm{ and } \phi(y) + \sum_{i=1}^m \phi(x_i) = \phi(y \vee x_m)+ \sum_{i=1}^m \phi(y_i) .$$ Clearly $y_i \le x_i$ for $1 \le i \le n$. The statement $P(1)$ follows from the modularity of $\phi$ after observing that $\phi(y) + \phi(x_1) = \phi(y \wedge x_1) + \phi(y \vee x_1) = \phi(y \vee x_1) + \phi(y_1)$ and $y_1 \le x_1 \le y \vee x_1$. Let us assume the truth of $P(k)$ for some positive integer $k \le n-1$. By the induction hypothesis and the modularity of $\phi$, we have 

\begin{align*}
\phi(y) + \sum_{i=1}^{k+1} \phi(x_i) &= \phi(x_{k+1}) + ( \phi(y) + \sum_{i=1}^{k} \phi(x_i)) = \phi(x_{k+1}) + (\phi(y \vee x_k) + \sum_{i=1}^{k} \phi(y_i)) \\
&=(\phi(x_{k+1}) + \phi(y \vee x_k)) +  \sum_{i=1}^{k} \phi(y_i)  \\
&= \phi(y \vee x_k \vee x_{k+1}) + \phi((y \vee x_k) \wedge x_{k+1})) + \sum_{i=1}^{k} \phi(y_i) \\
&= \phi(y \vee x_{k+1}) + \phi(y_{k+1}) + \sum_{i=1}^{k} \phi(y_i)  \\ 
&= \phi(y \vee x_{k+1}) + \sum_{i=1}^{k+1} \phi(y_i) 
\end{align*}

Further by the induction hypothesis, note that $y_1 \le \cdots \le y_k \le y \vee x_k$. As $y_k \le x_k \le x_{k+1}$, clearly $y_k \le  (y \vee x_k) \wedge x_{k+1} = y_{k+1} \le y \vee x_k \le y \vee x_{k+1}$. Thus $y_1 \le \cdots \le y_{k+1} \le y \vee x_{k+1}$. This finishes the proof of the assertion $P(k+1)$ and by induction, we have that $P(m)$ is true for $1 \le m \le n$. The assertion $P(n)$ is precisely the statement of the theorem.
\end{proof}

\begin{thm}
\label{thm:m_chain_rep}
\textsl{
Let $\mathcal{L}$ be a distributive lattice. For $n \in \mathbb{N}$ and a multiset $X := \{ x_1, x_2, \ldots, x_n \}$ with support contained in $\mathcal{L}$, we have $\sum_{i=1}^n \phi(x_i) = \sum_{i=1}^n \phi(X_{(i)})$.}
\end{thm}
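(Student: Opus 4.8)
The plan is to induct on the cardinality $n$ of the multiset $X$. The base case $n=1$ is immediate, since $X_{(1)} = x_1$ by definition and so both sides equal $\phi(x_1)$. For the inductive step the idea is to peel off one element, reinsert it into the chain of order statistics of the remaining elements using Proposition \ref{prop:lat_ord}, and check that this reinsertion reproduces exactly the order statistics of the full multiset.

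Concretely, given $X = \{x_1, \ldots, x_{n+1}\}$, set $X' := \{x_1, \ldots, x_n\}$. By Remark \ref{rmrk:order_stat_latt} the order statistics satisfy $X'_{(1)} \le \cdots \le X'_{(n)}$, so they form a multichain with support in $\mathcal{L}$. I would apply Proposition \ref{prop:lat_ord} to this multichain with $y := x_{n+1}$, obtaining elements $y_1 \le \cdots \le y_{n+1}$ given by $y_1 = x_{n+1} \wedge X'_{(1)}$, $y_i = (x_{n+1} \vee X'_{(i-1)}) \wedge X'_{(i)}$ for $2 \le i \le n$, and $y_{n+1} = x_{n+1} \vee X'_{(n)}$, and satisfying $\phi(x_{n+1}) + \sum_{i=1}^n \phi(X'_{(i)}) = \sum_{i=1}^{n+1} \phi(y_i)$. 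Combining with the induction hypothesis $\sum_{i=1}^n \phi(x_i) = \sum_{i=1}^n \phi(X'_{(i)})$ then yields $\sum_{i=1}^{n+1} \phi(x_i) = \sum_{i=1}^{n+1} \phi(y_i)$, so it remains only to identify $y_k$ with $X_{(k)}$.

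The crux is therefore the purely lattice-theoretic identity $y_k = X_{(k)}$ for $1 \le k \le n+1$. To establish it I would partition the $k$-element index subsets appearing in the defining meet of $X_{(k)}$ according to whether they contain the index $n+1$. The subsets avoiding $n+1$ range over all $k$-subsets of $\langle n \rangle$ and contribute exactly $X'_{(k)}$; the subsets containing $n+1$ are the $(k-1)$-subsets of $\langle n \rangle$ with $n+1$ adjoined, contributing the meet of $(x_{i_1} \vee \cdots \vee x_{i_{k-1}} \vee x_{n+1})$ over all such subsets. Here distributivity is used: the finite distributive law $(\wedge_j a_j) \vee c = \wedge_j (a_j \vee c)$, which follows by induction from the two-variable identity $x \vee (y \wedge z) = (x \vee y) \wedge (x \vee z)$, lets me factor out $x_{n+1}$ and rewrite this second contribution as $X'_{(k-1)} \vee x_{n+1}$. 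Hence $X_{(k)} = X'_{(k)} \wedge (X'_{(k-1)} \vee x_{n+1}) = y_k$ for $2 \le k \le n$, while the boundary cases $X_{(1)} = x_{n+1} \wedge X'_{(1)} = y_1$ and $X_{(n+1)} = x_{n+1} \vee X'_{(n)} = y_{n+1}$ fall out directly (in these the respective ``Case A'' or ``Case B'' family of subsets is empty or a singleton). Substituting $y_k = X_{(k)}$ closes the induction.

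The main obstacle is this identification $y_k = X_{(k)}$, and it is precisely the step where distributivity of $\mathcal{L}$ is indispensable; Proposition \ref{prop:lat_ord} itself relies only on the modularity of $\phi$, and without distributivity the recursion for order statistics breaks down. I would take some care to state the finite distributive law explicitly and to treat the boundary indices $k=1$ and $k=n+1$ separately, so that the absence of the terms $X'_{(0)}$ and $X'_{(n+1)}$ does not cause confusion in the recursion $X_{(k)} = X'_{(k)} \wedge (X'_{(k-1)} \vee x_{n+1})$.
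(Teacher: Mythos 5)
Your proposal is correct and follows essentially the same route as the paper's own proof: induction on $n$, applying Proposition \ref{prop:lat_ord} to the multichain of order statistics $X'_{(1)} \le \cdots \le X'_{(n)}$ of the first $n$ elements with $y = x_{n+1}$, and then using distributivity to identify the resulting elements $y_k$ with the order statistics of the full multiset via $X_{(k)} = X'_{(k)} \wedge (X'_{(k-1)} \vee x_{n+1})$. The only difference is presentational: you spell out the partition of the $k$-element index subsets (according to whether they contain $n+1$) slightly more explicitly than the paper, which states the same two identities directly.
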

\begin{proof}
We proceed inductively. The base case of $n=1$ is trivially true. For $n \in \mathbb{N}$, let us assume the truth of the assertion for any multisubset of cardinality $n$ with support contained in $\mathcal{L}$. Consider a multisubset $Y := \{ x_1, x_2, \ldots, x_{n+1} \}$ with support contained in $\mathcal{L}$ and let $X := \{ x_1, \ldots, x_n \}$. Recall that $$X_{(k)} = \wedge_{1 \le i_1 < \cdots < i_k \le n} (x_{i_1} \vee \cdots \vee x_{i_k}), 1\le k \le n.$$
As $\mathcal{L}$ is distributive, for $2 \le k \le n$, we have
$$x_{n+1} \vee \, X_{(k-1)} = \wedge_{1\le i_1 < \cdots < i_{k-1} \le n}(x_{n+1} \vee x_{i_1} \vee \cdots \vee x_{i_k}).$$ 
Thus for $2 \le k \le n$, we have $$(x_{n+1} \vee X_{(k-1)}) \wedge X_{(k)} = \wedge_{1\le i_1 < \cdots < i_k \le n+1}(x_{i_1} \vee \cdots \vee x_{i_k}) = Y_{(k)}.$$
It is straighforward to see that 
$$x_{n+1} \wedge X_{(1)} =  x_{n+1} \wedge (\wedge_{1\le i \le n}\, x_i) = Y_{(1)},$$
$$x_{n+1} \vee X_{(n)} =  x_{n+1} \vee (\vee_{1\le i \le n}\, x_i) = Y_{(n+1)}.$$

By the induction hypothesis, note that $\sum_{i=1}^n \phi(x_i) = \sum_{i=1}^n \phi(X_{(i)})$. Using Proposition \ref{prop:lat_ord}(ii) for the multichain $X_{(1)} \le \cdots \le X_{(n)}$ and $x_{n+1}$, we see that $\sum_{i=1}^{n+1} \phi(x_i) = \phi(x_{n+1}) + \sum_{i=1}^n \phi(x_i)  = \phi(x_{n+1}) + \sum_{i=1}^n \phi(X_{(i)}) =  \sum_{i=1}^{n+1} \phi(Y_{(i)})$. Thus by the principle of mathematical induction, the assertion of the theorem is true for all $n$ in $\mathbb{N}$.

\end{proof}

\subsection{Divisibility Lattice of GCD Domains}
\label{subsec:div_latt}

Let $\mathcal{R}$ be an integral domain. Consider the following equivalence relation on $\mathcal{R}$ : for $r, s $ in $\mathcal{R}$, define $r \sim_1 s$ if $r = us$ for an invertible element $u$ in $\mathcal{R}$ (in other words, if $r$ and $s$ are associates). We denote the equivalence class of $r$ by $(r)$. The set of equivalence classes of $\sim _1$ is denoted by $(\sR)$, and has a natural one-to-one correspondence with principal ideals of $\sR$. There is a natural partial order on $(\sR)$ based on divisibility, or equivalently, the inclusion of principal ideals. For $r_1, r_2$ in $\mathcal{R}$, we say that $(r_1) \le (r_2)$ if $r_2 \mid r_1$, or equivalently, $\sR r_1 \subseteq \sR r_2$. Further, the multiplicative structure on $\sR$ induces a monoidal structure on $(\sR)$ defined by $(r_1) \cdot (r_2) := (r_1 r_2)$ with identity $(1)$. Note that $\big( (\sR); \cdot \big)$ is a commutative monoid.

\begin{remark}
For integral domains $\sR _1, \sR _2$, let $\Phi : \sR _1 \to \sR _2$ be a ring homomorphism. As $\Phi$ takes invertible elements in $\sR _1$ to invertible elements in $\sR _2$, it induces an order preserving map from $\big( (\sR _1); \le \big)$ to $\big( (\sR _2); \le \big)$.
\end{remark}

\begin{definition}
A totally ordered multiset with support in $\big( (\sR); \le \big)$ is said to be a {\it divisor multichain}.
\end{definition}

Recall that every GCD domain is a LCM domain. Let $\sR$ be a GCD domain. For $r_1, r_2$ in $\sR$ we may define $(r_1) \wedge (r_2) := \textrm{LCM}\big( (r_1), (r_2) \big), (r_1) \vee (r_2) := \textrm{GCD} \big( (r_1), (r_2) \big)$ which makes $\big( (R); \vee, \wedge) \big)$ a lattice. The assertions in the following proposition are well-known (or straightforward) and we mention them without proof. 

\begin{prop}
\label{prop:prod_L1}
\textsl{
Let $\sR$ be a GCD domain. For $x, y , z$ in $(R)$, we have 
\begin{itemize}
\item[(i)] $xy = (x \wedge y)(x \vee y),$
\item[(ii)] $(0) \le x \le (1),$
\item[(iii)] $x \wedge (y \vee z) = (x \wedge y) \vee (x \wedge z),$
\item[(iv)] $x \vee (y \wedge z) = (x \vee y) \wedge (x \vee z).$
\end{itemize}
In other words, the identity map $\iota : \big( (\sR); \vee, \wedge \big) \rightarrow \big( (\sR); \cdot \big)$ is a modular map, and $\big( (\sR); \vee, \wedge \big)$ is a bounded, distributive lattice.
}
\end{prop}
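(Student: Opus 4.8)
The plan is to isolate a single workhorse, the \emph{scaling lemma} $\gcd(ca,cb) \sim c\,\gcd(a,b)$ for $c \neq 0$, and to deduce everything else from it. This lemma follows from cancellation in the integral domain $\mathcal{R}$: writing $d = \gcd(a,b)$, one checks that $cd$ divides both $ca$ and $cb$, while if $e = \gcd(ca,cb)$ then $c \mid e$ (since $c$ is a common divisor of $ca,cb$), and cancelling $c$ shows $e/c \mid a$ and $e/c \mid b$, whence $e/c \mid d$ and so $e \sim cd$. I would first use the scaling lemma to settle the lattice structure and part (i) simultaneously. The element $\ell := ab/\gcd(a,b)$ is a well-defined common multiple of $a$ and $b$, and if $a \mid n$ and $b \mid n$ then $ab \mid \gcd(an,bn) \sim n\,\gcd(a,b)$, so $\ell \mid n$; thus $\ell \sim \operatorname{lcm}(a,b)$ exists, $(\fF^1(R);\vee,\wedge)$ is a lattice with join given by the LCM and meet by the GCD, and $\gcd(a,b)\,\operatorname{lcm}(a,b) \sim ab$. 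Reading this last relation inside $(\fF^1(R);\cdot)$ is exactly part (i), $xy = (x\wedge y)(x\vee y)$, which is also precisely the statement that the identity map $\iota$ is modular. Part (ii) is immediate, since $1 \mid r$ and $r \mid 0$ for every $r \in \mathcal{R}$, so $[1]$ is the bottom element and $[0]$ the top.

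For the distributive laws (iii) and (iv) I would first note that one inequality holds in any lattice: from $x \wedge y \le x \wedge (y \vee z)$ and $x \wedge z \le x \wedge (y \vee z)$ we get $(x\wedge y)\vee(x\wedge z) \le x \wedge (y\vee z)$. Writing $x = [a]$, $y = [b]$, $z = [c]$, it remains to prove the reverse divisibility $\gcd(a,\operatorname{lcm}(b,c)) \mid \operatorname{lcm}(\gcd(a,b),\gcd(a,c))$. Setting $D := \gcd(a,b,c) = \gcd(\gcd(a,b),\gcd(a,c))$ and using $\operatorname{lcm}(\gcd(a,b),\gcd(a,c)) \sim \gcd(a,b)\gcd(a,c)/D$, this is equivalent to $D\cdot\gcd(a,\operatorname{lcm}(b,c)) \mid \gcd(a,b)\gcd(a,c)$. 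Repeated use of the scaling lemma gives $\gcd(a,b)\gcd(a,c) \sim \gcd(a^2,ab,ac,bc) \sim \gcd(aD,bc)$ and $D\cdot\gcd(a,\operatorname{lcm}(b,c)) \sim \gcd(aD,\,D\operatorname{lcm}(b,c))$; since $D \mid \gcd(b,c)$ we have $D\operatorname{lcm}(b,c) \mid \gcd(b,c)\operatorname{lcm}(b,c) \sim bc$, so $\gcd(aD,\,D\operatorname{lcm}(b,c))$ divides both $aD$ and $bc$, hence divides $\gcd(aD,bc)$, which is the required divisibility. The dual law (iv) then follows because the two distributive laws are equivalent in any lattice; together with (ii) this gives that $(\fF^1(R);\vee,\wedge)$ is a bounded distributive lattice.

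The main obstacle is this reverse divisibility in the distributive law. Unlike part (i), part (ii), and the free inequality, it genuinely exploits the arithmetic of $\mathcal{R}$ rather than formal lattice manipulation, and the entire difficulty lies in arranging the scaling lemma so that the spurious common factor $D = \gcd(a,b,c)$ cancels correctly. Once the scaling lemma and the product relation $\gcd(a,b)\,\operatorname{lcm}(a,b) \sim ab$ are secured, everything else is routine bookkeeping.
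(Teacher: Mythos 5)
Your proof is correct, but it cannot be compared line-by-line with the paper's, because the paper gives none: Proposition \ref{prop:prod_L1} is stated with the remark that the assertions are ``well-known (or straightforward)'' and are mentioned without proof. What you have done differently is to make the result genuinely self-contained, deriving everything from the single scaling lemma $\gcd(ca,cb) \sim c\gcd(a,b)$: this yields at once the \emph{existence} of LCMs in a GCD domain (a fact the paper tacitly assumes when it defines $[r_1]\vee[r_2] := [\textrm{LCM}(r_1,r_2)]$ and asserts that $(\fF^1(R);\vee,\wedge)$ is a lattice), the product relation (i), and — after the reduction $D\cdot\gcd(a,\mathrm{lcm}(b,c)) \mid \gcd(a,b)\gcd(a,c)$ with $D = \gcd(a,b,c)$ — the nontrivial half of distributivity, with (iv) then following from the standard equivalence of the two distributive laws. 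Your chain $\gcd(a,b)\gcd(a,c) \sim \gcd(a^2,ab,ac,bc) \sim \gcd(aD,bc)$ and the estimate $\gcd(aD, D\,\mathrm{lcm}(b,c)) \mid \gcd(aD,bc)$ are all valid applications of the scaling lemma together with associativity of $\gcd$ and part (i). The only caveat is that your generic arguments divide by $\gcd(a,b)$ and by $D$, and the scaling lemma itself requires $c \neq 0$; so the degenerate cases in which these elements are $0$ (i.e., all relevant ring elements are $0$) need to be noted separately, though they are trivial since $[0]$ is the top element of the lattice and every identity in (i)--(iv) then reads $[0]=[0]$ or reduces to an identity in fewer variables. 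With that one-line remark added, your argument is a complete and correct proof, and arguably a useful supplement to the paper, whose later results (Lemma \ref{lem:ufd_fund}, Theorem \ref{thm:fund_ufd_algo}, Theorem \ref{thm:div_chain_rep}) lean on exactly these facts.
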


\section{The Functor $\fU \fR$}
\label{sec:l_functor}

Before embarking on our study of ranked rings in \S \ref{sec:ranked_rings}, we first set up an algebraic framework for the discussion. In this section, we define a functor $\fU \fR$ from the category of rings to the category of commutative monoids. This not only serves to provide cleaner notation but also helps us formulate an algorithmic approach towards generating rank identities. Each element of the $\fU \fR$-monoid may be thought of as encoding a class of rank identities. This assertion is rigorously formulated in later sections (see Theorem \ref{thm:univ_rank_sys} and Proposition \ref{prop:rank_id_fund}).

Let $\mathcal{R}$ be a ring, and $n \in \N$. For matrices $X, Y$ in $M_n(\mathcal{R})$, we define a relation $X \sim_n  Y$ if there are invertible matrices $A, B \in GL_n(\mathcal{R} )$ such that $X = A\,YB$. Note that $\sim_n $ is an equivalence relation on $M_n(\mathcal{R})$ as shown below.
\begin{itemize}
\item[(i)] (Reflexivity) $X \sim_n X$, as $X = I_n X I_n$ for the identity matrix $I_n$ in $GL_n(\mathcal{R})$.
\item[(ii)] (Symmetry) For tuples $X, Y$ in $M_n(\mathcal{R})$, let $X \sim_n  Y$. For some elements $A, B \in GL_n(\mathcal{R})$, we have  $X = A\, Y B.$ Thus $Y = A^{-1}XB^{-1}$ which implies that $Y \sim_n X$.
\item[(iii)] (Transitivity) For tuples $X, Y, Z$ in $\mathcal{R}_{\langle n \rangle}$, let $X \sim_n  Y, Y \sim_n  Z $. Thus for some matrices $A, B, C, D \in GL_n(\mathcal{R})$, we have $X = A\,YB, Y = CZD$. As a result, $X = (AC)Z (D B)$ which implies that $X \sim_n Z$.
\end{itemize}

\begin{definition}
Let $\sR$ be a ring. On $M_{\mathrm{fin}}(\sR) := \sqcup_{i \in \N} M_i(\sR)$, we define a binary operation $\odot : M_{\mathrm{fin}}(\sR) \times M_{\mathrm{fin}}(\sR) \rightarrow M_{\mathrm{fin}}(\sR)$ which for matrices $X \in M_i(\mathcal{R}), Y \in M_{j}(\mathcal{R})$ gives the matrix $X \odot Y := \mathrm{diag}(X, Y)$ in $ M_{i+j}(\mathcal{R}) $.
\end{definition}

\begin{prop}
\label{prop:fund_L_func}
Let $\sR$ be a ring. For natural numbers $k, \ell, m$, let $X_1, Y_1 \in M_k(\mathcal{R}), X_2, Y_2 \in M_{\ell}(\mathcal{R})$ and $ X_3 \in M_{m}(\mathcal{R})$.
\begin{itemize}
\item[(i)] $X_1 \odot X_2 \sim_{k + \ell} X_2 \odot X_1$ in $M_{k+\ell}(\mathcal{R})$.
\item[(ii)] $X_1 \odot (X_2 \odot X_3) \sim_{k + {\ell} + m} (X_1 \odot X_2) \odot X_3$ in $M_{k+\ell+m}(\mathcal{R})$.
\item[(iii)] If $X_1 \sim_k Y_1$ in $M_k(\mathcal{R})$, $X_2 \sim_{\ell} Y_2$ in $M_{\ell}(\mathcal{R})$, then $X_1 \odot X_2 \sim_{k + \ell} Y_1 \odot Y_2$ in $M_{k + \ell}(\mathcal{R})$.
\end{itemize}
\end{prop}
\begin{proof}
(i) There is a permutation matrix $P \in GL_n(\mathcal{R})$ such that  $P (X_1 \odot X_2) P^{-1} = X_2 \odot X_1$. Thus $X_1 \odot X_2 \sim_{k+\ell}  X_2 \odot X_1$.
\vskip 0.05in

(ii) Straightforward from the definition of $\odot$.
\vskip 0.05in

(iii) Consider invertible elements $A_1, B_1 \in GL_m(\mathcal{R})$, $A_2, B_2 \in GL_n(\mathcal{R})$ such that $X_1 = A_1Y_1 B_1$ and $X_2 = A_2 Y_2 B_2$. The matrices $A_1 \odot A_2,  B_1 \odot B_2$ are in $GL_{m+n}(\sR)$ with inverses $A_1 ^{-1} \odot A_2 ^{-1}, B_1 ^{-1} \odot B_2 ^{-1} $, respectively. We have $X_1 \odot X_2 = (A_1 ^{-1} \odot A_2 ^{-1})\, (Y_1 \odot Y_2) (B_1 ^{-1} \odot B_2 ^{-1})$. Thus $X_1 \odot X_2 \sim_{m+n} Y_1 \odot Y_2$.

\end{proof}

\begin{definition}
For a $k \times k$ matrix $A \in M_{\mathrm{fin}}(\sR)$, define $\mathrm{size}(A) := k$. For $X, Y$ in $M_{\mathrm{fin}}(\sR)$, we define a relation $X \sim Y$ if there exists integers $m, n \ge 0$ such that $\mathrm{size}(X) + m = \mathrm{size}(Y)+n =: \ell$ and 
$X \odot \,{\bf 0}_m \sim_{\ell} Y \odot \,{\bf 0}_n$.
\end{definition}

\begin{lem}
\textsl{
For a ring $\sR$, the relation $\sim$ on $M_{\mathrm{fin}}(\sR)$ is an equivalence relation.
}
\end{lem}
\begin{proof}
It is immediate that $\sim$ is reflexive and symmetric. Let $X, Y, Z \in M_{\mathrm{fin}}(\sR)$ such that $X \sim Y$ and $Y \sim Z$. For some integers $m_1, n_1, m_2, n_2 \ge 0$, we have $\mathrm{size}(X) + m_1 = \mathrm{size}(Y) + n_1 =: \ell_1$ with $X \odot \,{\bf 0}_{m_1} \sim_{\ell_1} Y \odot \,{\bf 0}_{n_1}$, and $\mathrm{size}(Y) + m_2 = \mathrm{size}(Z) + n_2 =: \ell_2$ with $Y \odot \,{\bf 0}_{m_2} \sim_{\ell_2} Z \odot \,{\bf 0}_{n_2}$. If $n_1 \le m_2$, we see that $\mathrm{size}(X) + m_1 + m_2 - n_1 = \mathrm{size}(Y) + m_2 = \mathrm{size}(Z) + n_2 = \ell_2$ and $X \odot {\bf 0}_{m_1 + m_2 - n_1} \sim_{\ell_2} Y \odot {\bf 0}_{m_2} \sim_{\ell_2} Z \odot \,{\bf 0}_{n_2}$. If $n_1 > m_2$, we see that $\mathrm{size}(X) + m_1 = \mathrm{size}(Y) + m_2 + n_1 - m_2 = \mathrm{size}(Z) + n_2 + n_1 - m_2 = \ell _1$ and $X \odot {\bf 0}_{m_1} \sim_{\ell_2} Y \odot {\bf 0}_{n_1} \sim_{\ell_2} Z \odot \,{\bf 0}_{n_1 + n_2 - m_2}$. Thus $X \sim Z$.
 
Hence the relation $\sim$ is also transitive. In summary, the relation $\sim$ is an equivalence relation on $M_{\mathrm{fin}}(\sR)$.
\end{proof}

\begin{remark}
\label{rmrk:l_functor}
The equivalence class of a matrix $X \in 	M_{\mathrm{fin}}(\mathcal{R})$ under the equivalence relation $\sim$ is denoted by $[X]$. The set of equivalence classes of $	M_{\mathrm{fin}}(\mathcal{R})$ under $\sim$ is denoted by $\fU \fR (\mathcal{R}) := \big\{ [X]: X \in M_{\mathrm{fin}}(\mathcal{R}) \big\} $.  We reuse notation and define a binary operation $\odot : \fU \fR(\sR) \times \fU \fR(\sR) \rightarrow \fU \fR (\sR)$ given by $[X] \odot [Y] := [X \odot Y]$ which is well-defined by Proposition \ref{prop:fund_L_func}. Note that $[X] \odot [0] = [X \odot 0] = [X].$ Thus $\big( \fU \fR(\sR), \odot \big)$ possesses an identity $[0]$ and by Proposition \ref{prop:fund_L_func}, (i),(ii), we observe that $\fU \fR (\mathcal{R})$ is a commutative monoid. 
\end{remark}

\begin{prop}[Functoriality of $\fU \fR$]
\label{prop:L_functor}
\textsl{
For rings $\sR _1, \sR _2$, let $\Phi : \sR _1 \to \sR _2$ be a ring homomorphism. By entrywise application of $\Phi$ on matrices over $\sR _1$, we obtain ring homomorphisms from $M_n(\sR _1)$ to $M_n(\sR _2)$ for all $n \in \mathbb{N}$, which by abuse of notation, we again denote by $\Phi$. Then the map $\fU \fR (\Phi) : \fU \fR(\sR _1) \rightarrow \fU \fR(\sR _2)$ given by $\fU \fR (\Phi)([X]) = [\Phi(X)]$ (for $X \in M_{\mathrm{fin}}(\sR _1)$) is well-defined and has the following properties : 
\begin{itemize}
\item[(i)] $\fU \fR (\Phi)([0]) = [0]$.
\item[(ii)] For $[X], [Y] \in \fU \fR(\sR _1)$, we have $\fU \fR (\Phi)([X] \odot [Y]) = \fU \fR(\Phi)([X]) \odot \fU \fR (\Phi)([Y])$.
\end{itemize} 
In other words, $\fU \fR (\Phi)$ is a homomorphism between the commutative monoids $\fU \fR (\sR _1)$ and $\fU \fR (\sR _2)$.}
\end{prop}

\begin{proof}
For $n \in \mathbb{N}$, note that $\Phi$ induces a ring homomorphism from $M_n(\sR _1)$ to $M_n(\sR _2)$ by applying it entrywise. Thus for all $n \in \N$,  $\Phi({\bf 0}_n) = {\bf 0}_n$ and $\Phi(GL_n(\sR _1)) \subseteq GL_n(\sR _2)$. As a consequence, the map $\fU \fR (\Phi) : \fU \fR(\sR _1) \rightarrow \fU \fR (\sR _2)$ given by $\fU \fR (\Phi)([X])  = [\Phi(X)]$ is well-defined.

(i) As $\Phi(0) = 0$, clearly $\fU \fR (\Phi)([0]) = [0]$.

(ii) For $[X], [Y] \in \fU \fR(\mathcal{R})$, we have $\fU \fR (\Phi)([X] \odot [Y]) = \fU \fR(\Phi)([X \odot Y]) = [\Phi(X \odot Y)] = [\Phi(X) \odot \Phi(Y)] = [\Phi(X)] \odot [\Phi(Y)] = \fU \fR(\Phi)([X]) \odot \fU \fR (\Phi)([Y])$. 
\end{proof}

\begin{notation}
Let $\sR$ be a ring. For a subset $\mathscr{S}$ of $M_{\mathrm{fin}}(\sR)$, we define $[\mathscr{S}] := \{ [X] : X \in \sS \} \subseteq \fU \fR (\sR)$. In this notation, $\fU \fR (\sR) = [M_{\mathrm{fin}}(\sR)]$. 
\end{notation}

\begin{definition}
\label{def:l_functor}
We denote the category of (unital) rings by {\it Rings} and the category of commutative monoids by {\it CMon}. The mapping from Rings to CMon which maps a ring $\sR$ to the
commutative monoid $\fU \fR (\sR)$, and a ring homomorphism $\Phi : \sR _1 \to \sR _2$ between rings $\sR _1, \sR _2$ to the monoid homomorphism $\fU \fR (\Phi) : \fU \fR (\sR _1) \to \fU \fR (\sR _2)$, is a covariant functor. Furthermore, $\fU \fR (\sR)$ is said to be the $\fU \fR$-monoid of $\sR$.
\end{definition}

\section{Ranked rings}
\label{sec:ranked_rings}

In this section, we define the notion of {\it ranked rings} (rings with a rank-system) and study some examples pertinent to operator theory. For example, we equip every Murray-von Neumann algebra with a center-valued rank-system which turns out to be the universal rank-system in an appropriate sense. We also show that every von Neumann rank-ring may be viewed as a ranked ring in a natural manner.

\begin{definition}
\label{def:ranked}
Let $\sR$ be a ring and $\sM$ be a commutative monoid. An {\it $\sM$-valued rank-system} or simply {\it rank-system} for $\sR$ is a map $\rho : M_{\mathrm{fin}}(\sR) \rightarrow \sM$ satisfying the following properties : 
\begin{itemize}
\item[(a)] For $X, Y \in M_{\mathrm{fin}}(\sR)$, $\rho(X \odot Y) = \rho(X) + \rho(Y)$ ,
\item[(b)] For $n \in \N$ and $X, Y \in M_n(\sR)$ with $Y \in GL_n(\sR)$, we have $\rho(XY) = \rho(YX) = \rho(X)$.
\item[(c)] For $n \in \N$, $\rho({\bf 0}_n) = 0$.
 \end{itemize}
With the $\sM$-valued rank-system $\rho$, $\sR$ is said to be {\it $(\sM, \rho)$-ranked} or simply, {\it $\rho$-ranked}. A rank-system $\rho$ is said to be {\it non-degenerate} if $\rho(X) = 0$ for $X \in M_n(\sR)$ implies that $X = {\bf 0}_n$.  It may be helpful for the reader to mentally substitute any occurrence of $\rho$ in this article with the word `rank'.
\end{definition}

\begin{thm}[The universal rank-system]
\label{thm:univ_rank_sys}
\textsl{
Let $\sR$ be a ring and $\sM$ be a commutative monoid. The map $\mathrm{rank}_{u} : M_{\mathrm{fin}}(\sR) \to \big( \fU \fR (\sR); \odot \big)$ defined by $\mathrm{rank}_{u}(X) = [X]$ is a surjective rank-system on $\sR$. Furthermore, if $\rho : M_{\mathrm{fin}}(\sR) \to \sM$ is a rank-system on $\sR$, then there is a unique monoid homomorphism $\varphi : \fU \fR (\sR) \to \sM$ such that $\rho = \varphi \circ \mathrm{rank}_{u}$. In other words, every rank-system on $\sR$ factors through the rank-system, $\mathrm{rank}_{u}$.
}
\end{thm}
\begin{proof}
It is straightforward from the definition of the $\fU \fR$-monoid that $\mathrm{rank}_u$ is a surjective rank-system on $\sR$.

For $X, Y \in M_{\mathrm{fin}}(\sR)$, let $\mathrm{rank}_u(X) = \mathrm{rank}_u(Y)$. Then $X \odot {\bf 0}_k \sim_m Y \odot {\bf 0}_{\ell}$ for some $k, \ell, m \in \N$. From property (b) in Definition \ref{def:ranked}, we have $\rho(X \odot {\bf 0}_k) = \rho(Y \odot {\bf 0}_{\ell})$. Using property (a), (c), we have $\rho(X \odot {\bf 0}_k) = \rho(X) + \rho({\bf 0}_k) = \rho(X)$, and $\rho(Y \odot {\bf 0}_k) = \rho(Y) + \rho({\bf 0}_{\ell}) = \rho(Y)$. Thus $\rho(X) = \rho(Y)$. Since the mapping $\mathrm{rank}_u$ is surjective, we observe that the mapping $\varphi : \fU \fR(\sR) \to G$ given by $\varphi \big( \mathrm{rank}_u(X) \big) = \rho(X)$ is well-defined.

Note that $\varphi([0]) = \rho(0) = 0$. Let $X, Y \in M_{\mathrm{fin}}(\sR)$. Using property (a) in Definition \ref{def:ranked}, we have $\varphi \big( \mathrm{rank}_u(X) \odot \mathrm{rank}_u(Y) \big) = \varphi \big( \mathrm{rank}_u(X \odot Y) \big) = \rho(X \odot Y) = \rho(X) + \rho(Y)$. Thus $\varphi$ is a monoid homomorphism.
\end{proof}

\begin{remark}
In light of Theorem \ref{thm:univ_rank_sys}, we may interprete the moniker $\fU \fR$ used in \S \ref{sec:l_functor} as an abbreviation for ``universal rank-system".
\end{remark}

\begin{remark}
Let $\sR _1$ be a ring, and $\sR _2$ be a $(\sM, \rho)$-ranked ring. Let $\Phi : \sR_1 \to \sR_2$ be a ring homomorphism. Then the mapping $\Phi^*(\rho) : M_{\mathrm{fin}}(\sR_1) \to \sM$ defined by $\Phi ^*(\rho)(X) = \rho(\Phi(X))$ is a $\sM$-valued rank-system on $\sR_1$, called the {\it pullback} of $\rho$ under $\Phi$.
\end{remark}

\begin{cor}
\label{cor:UR_mon_morita}
\textsl{
Let $\sR$ be a ring. Then $\fU \fR (\sR) \cong \fU \fR \big( M_n(\sR) \big) $ for all $n \in \N$.
}
\end{cor}
\begin{proof}
Let $\mathrm{rank}_{u,1}$ denote the universal rank-system on $\sR$, and $\mathrm{rank}_{u,n}$ denote the universal rank-system on $M_n(\sR)$. Consider the natural injection $\iota : M_{\mathrm{fin}}\big( M_n(\sR) \big) \hookrightarrow M_{\mathrm{fin}}(\sR)$. It is easy to see that the restriction of $\mathrm{rank}_{u,1}$ to $M_{\mathrm{fin}} \big( M_n(\sR) \big)$ defines a rank-system for $M_n(\sR)$. By virtue of Theorem \ref{thm:univ_rank_sys}, there is a unique monoid homomorphism $\iota ^* : \fU \fR \big( M_n(\sR) \big) \to \fU \fR (\sR)$  such that the following diagram commutes.

\[\begin{tikzcd}
M_{\mathrm{fin}} \big( M_n(\sR) \big) \arrow{r}{\iota} \arrow[swap]{d}{\mathrm{rank}_{u,n}} & M_{\mathrm{fin}}(\sR) \arrow{d}{\mathrm{rank}_{u,1}} \\
\fU \fR \big( M_n(\sR) \big) \arrow{r}{\iota ^*} & \fU \fR (\sR)
\end{tikzcd}
\]

Let $X, Y \in M_{\mathrm{fin}}(M_n(\sR))$ such that $\mathrm{rank}_{u, 1}(X) = \mathrm{rank}_{u,1}(Y)$, that is, $X \odot {\bf 0}_k \sim_m Y \odot {\bf 0}_{\ell}$ for some $k, \ell, m \in \N$. Let $m' \in \N$ such that $m+m'$ is divisible by $n$. Then by Proposition \ref{prop:fund_L_func}, we have $X \odot {\bf 0}_{k+m'} \sim_{m+m'} Y \odot {\bf 0}_{\ell + m'}$ and thus $\mathrm{rank}_{u, n}(X) = \mathrm{rank}_{u, n}(Y)$. This shows that $\iota ^*$ is injective.

Let $m, m' \in \N$ such that $m + m'$ is divisible by $n$. For $X \in M_m(\sR)$, note that $X \odot {\bf 0}_{m'} \in M_{\mathrm{fin}}(M_n(\sR))$ so that $\iota ^* (\mathrm{rank}_{u,n}(X \odot {\bf 0}_{m'})) = \mathrm{rank}_{u,1}(X)$. Thus $\iota ^*$ is surjective.

In summary, $\iota ^*$ defines a monoid isomorphism between $\fU \fR \big( M_n(\sR) \big)$ and $\fU \fR (\sR)$.
\end{proof}

\begin{example}
\label{ex:ranked_ring}
Let $\K$ be a field and $\mathrm{rank} : M_{\mathrm{fin}}(\K) \to \Z ^+$ denote the usual rank function (column-rank or equivalently, row-rank). This defines a $\mathbb{Z}^{+}$-valued rank-system for $\K$. In \S \ref{subsec:EDD_mon}, we show that this is, in fact, the universal rank-system for $\K$.
\end{example}

\subsection{The center-valued rank-system on finite von Neumann algebras and Murray-von Neumann algebras}

\label{subsec:center_valued}

Let $\mathscr{R}$ be a finite von Neumann algebra acting on a complex Hilbert space $\mathscr{H}$, and let $\mathscr{C}$ denote the center of $\mathscr{R}$. The canonical center-valued trace from $\mathscr{R}$ to $\mathscr{C}$ is denoted by $\tau : \mathscr{R} \rightarrow \mathscr{C}$. For $n \in \mathbb{N}$,  $M_n(\mathscr{R})$ is a finite von Neumann algebra acting on $\oplus_{i=1}^n \mathscr{H}$. There is a natural isomorphism between the center of $M_n(\mathscr{R})$ and $\mathscr{C}$. Let $\tau_n$ be the center-valued trace for $M_n(\mathscr{R})$ taking values in $\mathscr{C}$ via the isomorphism. For an operator $\mathbf{A} = (A_{ij})_{1 \le i, j \le n}$ in $M_n(\mathscr{R})$, it is not hard to see that $\tau_n(\mathbf{A}) = \frac{1}{n} \big( \tau(A_{11}) + \cdots + \tau(A_{nn}) \big)$. Note that $\tau_1 \equiv \tau$. 

\begin{definition}
The {\it rank} of an operator $A$ in $\afr$ is defined as $\fr _c (A) := \tau \big( R(A) \big)$.
\end{definition}

In the proposition below, we prove some fundamentally useful properties of the $\mathscr{C}^{+}$-valued rank, some of which may be found in \cite{tr_rank}.
\begin{prop}
\label{prop:rank_prop}
\textsl{
Let $A, B, E, F$ be operators in $\afr$ with $E, F$ being projections. Then we have the following:
\begin{itemize}
\item[(i)] $E \sim F$ if and only if $\fr _c(E) = \fr _c(F)$.
\item[(ii)] $E \preccurlyeq F$ if and only if $\fr _c(E) \le  \fr _c(F)$.
\item[(iii)] $\fr _c(R(A)) = \fr _c(A)$.
\item[(iv)] if $0 \le A \le B$, then $\fr _c(A) \le \fr _c(B)$ with equality if and only if $R(A) = R(B)$.
\item[(v)] $\fr _c(A) = \fr _c(A^*) = \fr _c(A^* \afrprod A) = \fr _c(A \afrprod A^*)$.
\item[(vi)] $\fr _c(AB) \le \min \big\{ \fr _c(A), \fr _c(B) \big\} $.
\item[(vii)] If $B$ is invertible, $\fr _c(A \afrprod B) = \fr _c(B \afrprod A)=\fr _c(A)$.
\item[(viii)] $\fr _c(A) = 0 \Longleftrightarrow A = 0$.
\end{itemize}
} 
\end{prop}
\begin{proof}
\noindent (i) Note that $E \sim F$ if and only if $\tau(E) = \tau(F)$. For projections $E, F$ in $\mathscr{R}$, it is straightforward to see that $R(E) = E, R(F) = F$ and thus $\fr _c(E) = \tau(E), \fr _c(F) = \tau(F)$.
\vskip 0.03in

\noindent (ii) Follows from part (i) and the fact that $E \preccurlyeq F$ if and only if $\tau(E) \le \tau(F)$.
\vskip 0.03in

\noindent (iii) The assertion holds as $R \big( R(A) \big) = R(A)$.
\vskip 0.03in

\noindent (iv) If $0 \le A \le B$, then $N(B) \le N(A)$. Thus by Lemma \ref{lem:range_proj_rel} (i), $R(A)  = I- N(A) \le  I - N(B) = R(B)$. The conclusion follows from part (i), (ii).
\vskip 0.03in

\noindent (v) Follows from part (i) of the proposition and Lemma \ref{lem:range_proj_rel}, (iii).
\vskip 0.03in

\noindent (vi) Note that since $\mathscr{C}$ is a commutative $C^*$-algebra, it makes sense to talk about the minimum of two self-adjoint operators in $\mathscr{C}$. By Lemma \ref{lem:range_proj_rel}, (iv), $R(A \afrprod B) = R \big( A \afrprod R(B) \big) \le R(A)$. Thus $\fr(AB) \le \fr(A)$. Similarly, using part (v), we conclude that $\fr(A \afrprod B) = \fr(B^* \afrprod A^*) \le \fr(B^*) = \fr(B)$.
\vskip 0.03in

\noindent (vii) If $B$ is invertible, note that $R(B) = R(B^*) = I$. Using Lemma \ref{lem:range_proj_rel}, (iv), we see that $\fr _c(A \afrprod B) = \fr _c \big( A \afrprod  R(B) \big) = \fr _c(A).$ Similarly, using part (v), we arrive at $\fr _c(B \afrprod A) = \fr _c(A^* \afrprod  B^*) = \fr _c \big( A^* \afrprod R(B^*) \big) = \fr _c(A^*) = \fr _c(A)$.
\vskip 0.03in

\noindent (viii) Follows from the faithfulness of $\tau$ and because $R(A) = 0$ if and only if $A = 0$.
\end{proof}

\begin{definition}
We define a map $\rho _c : \sqcup_{n \in \N} M_n(\mathscr{R})_{\textrm{aff}} \rightarrow \mathscr{C}^{+}$ by $\rho _c(A) = n \tau_n \big( R(A) \big)$ for $A$ in $M_n(\mathscr{R})_{\textrm{aff}}$. Note that $\fr _c$ is the restriction of $\rho _c$ to $\afr$.
\end{definition}

\begin{cor}
\label{cor:vna_rank_sys}
\textsl{
For operators $A, B$ in $M_n(\mathscr{R})_{\textrm{aff}}$, the following holds.
\begin{itemize}
\item[(i)] $\rho _c(A) = \rho _c(A^*) = \rho _c(A^* \afrprod A) = \rho _c(A \afrprod A^*)$.
\item[(ii)] If $B$ is invertible, $\rho _c(A \afrprod B) = \rho _c(B \afrprod A) = \rho _c(A)$.
\item[(iii)] $\rho _c(A) = 0 \Longleftrightarrow A = 0$.
\end{itemize}
}
\end{cor}

\begin{proof}
The assertions follow from Proposition \ref{prop:rank_prop} in the context of the finite von Neumann algebra $M_n(\mathscr{R})$.
\end{proof}
 
\begin{thm}
\label{thm:vna_rank_system}
\textsl{
The mapping $\rho _c : M_{\mathrm{fin}}(\afr) \to \mathscr{C}^{+}$ (using the isomorphism $M_n(\afr) \cong M_n(\mathscr{R})_{\textrm{aff}}$) defines a $\mathscr{C}^{+}$-valued rank-system on $\afr$.
}
\end{thm}
\begin{proof}
Note that Corollary \ref{cor:vna_rank_sys} has the ingredients to verify property \ref{def:ranked}(b), \ref{def:ranked}(c) for the rank-system $\rho _c$. We prove property \ref{def:ranked}(a) below.  It is clear from their respective definitions that for operators $X \in M_k(\mathscr{R}), Y \in M_{\ell}(\mathscr{R}) , k , \ell \in \mathbb{N}$ we have $(k+\ell)\tau_{k+\ell}(X \odot Y) = k \tau_k(X) + \ell \tau_{\ell}(Y)$.\ For $A \in M_k(\afr), B \in M_{\ell}(\afr)$, since $R(A \odot B) = R(A) \odot R(B)$ in $M_{k + \ell}(\mathscr{R})$, we have $\rho_c(A \odot B) = \rho_c(A) + \rho_c(B).$ From Remark \ref{rmrk:mvna_iso}, we conclude that $\rho _c$ equips $\afr$ with a $\mathscr{C}^+$-valued rank-system.
\end{proof}

In \S \ref{subsec:UR_mon_MvN}, we show that $\rho _c$ is, in fact, the universal rank-system for $\afr$.

\begin{remark}
\label{rmrk:fvn_rank_system}
The pullback of the rank-system $\rho$ on $\afr$ under the inclusion $\mathscr{R} \hookrightarrow \afr$ begets a $\mathscr{C}^{+}$-valued rank-system for $\mathscr{R}$. With this rank-system, we view $\mathscr{R}$ as a ranked $\C$-algebra or a ranked complex Banach algebra.
\end{remark}

\subsection{$\R ^{+}$-valued rank-systems on von Neuman rank-rings}
\label{subsec:rank_vNreg}

\begin{lem}
\label{lem:rank_fun_inv}
\textsl{
Let $\sR$ be a von Neumann regular ring and $\fr $ be a rank-function on $\sR$. For $x, y \in \sR$ with $y$ invertible, we have $\fr (x) = \fr (xy) = \fr (yx)$.
}
\end{lem}
\begin{proof}
Note that $\fr (x) = \fr \big( (xy)y^{-1} \big) = \fr \big( y^{-1}(yx) \big) \le \min \{ \fr (xy), \fr (yx) \} \le \fr (x).$ Thus $\fr (x) = \fr (xy) = \fr (yx)$.
\end{proof}

\begin{remark}
In the notation of Theorem \ref{thm:pushout_rank}, we observe that $\delta _n ^*$ defines a one-to-one correspondence between the set of rank-functions on $M_n(\sR)$ normalized to $n$ and the set of rank-functions on $\sR$ normalized to $1$.
\end{remark}

\begin{lem}
\label{lem:rank_fun_rank_sys}
\textsl{
Let $\sR$ be a von Neumann regular ring and $\fr ^{(1)}$ be a rank-function on $\sR$. For $k \in \N$, let $\fr ^{(k)} := (\delta _k ^*)^{-1}(\fr ^{(1)})$. Let $n$ be a positive integer. 
\begin{itemize}
\item[(i)] For an idempotent $e \in \sR$ and $1 \le m \le n$, we have $\fr ^{(n)}({\bf 0}_{m-1} \odot e \odot {\bf 0}_{n-m}) = \fr ^{(1)}(e)$.
\item[(ii)] For idempotents $e_1, \ldots, e_n \in \sR$, we have $\fr ^{(n)}(\odot_{i=1}^n e_i) = \sum_{i=1}^n \fr ^{(1)}(e_i)$;
\item[(iii)] For $1 \le m \le n$ and $X \in M_m(\sR)$, we have $\fr ^{(m)}(X) = \fr ^{(n)}(X \odot {\bf 0}_{n-m}) = \fr ^{(n)}({\bf 0}_{n-m} \odot X)$.
\item[(iv)] For $1 \le m \le n$ and $X_1 \in M_m(\sR), X_2 \in M_{n-m}(\sR)$, we have $\fr ^{(n)}(X_1 \odot X_2) = \fr ^{(m)}(X_1) + \fr ^{(n-m)}(X_2).$
\end{itemize}
}
\end{lem}
\begin{proof}
\noindent (i) Let $E_m := {\bf 0}_{m-1} \odot e \odot {\bf 0}_{n-m} \in M_n(\sR)$ for $1 \le m \le n$. Note that the $E_m$'s are mutually orthogonal similar idempotents in $M_n(\sR)$ with the similarity implemented by appropriate permutation matrices, and $\sum_{i=1}^n E_i = \odot_{i=1}^n e$. By Lemma \ref{lem:rank_fun_inv}, $\fr ^{(n)}(E_i) = \fr ^{(n)}(E_1)$ for $1 \le i \le n$. By property (iii), $n \fr ^{(1)}(e) =  \fr ^{(n)} \big( \delta_n(e) \big) = \fr ^{(n)}(\odot_{i=1}^n e) =  \sum_{i=1}^n \fr ^{(n)}(E_i) = n \fr ^{(n)}(E_1)$. Thus $\fr ^{(1)}(e)  = \fr ^{(n)}(E_m) = \fr ^{(n)}({\bf 0}_{k-1} \odot e \odot {\bf 0}_{n-m})$ for $1 \le m \le n$.
\vskip 0.1in

\noindent (ii) Define $F_m := {\bf 0}_{m-1} \odot e_m \odot {\bf 0}_{n-m}$ for $1 \le m \le n$. Note that the $F_m$'s are mutually orthogonal idempotents and from part (i), we have $\fr ^{(n)}(F_m) = \fr ^{(1)}(e_m)$. Thus $\fr ^{(n)}(\odot_{i=1}^n e_i) = \fr ^{(n)}(\sum_{i=1}^n E_i) = \sum_{i=1}^n \fr ^{(n)} (E_i) = \sum_{i=1}^n \fr ^{(1)}(e_i).$
\vskip 0.1in

\noindent (iii) Let $\rho : M_m(\sR) \to \R ^{+}$ be defined by $\rho(X) = \fr ^{(n)}(X \odot {\bf 0}_{n})$. It is easy to see that $\rho$ is a rank-function on $M_m(\sR)$. Since $\sR$ is von Neumann regular, we may choose $y \in \sR$ be such that $xyx = x$. Note that $e = xy$ is an idempotent and $\delta_m(x) = \delta_m(x) \delta_m(y) \delta_m(x)$ with $\delta_m(e) = \delta_m(x) \delta_m(y)$. Thus for any rank-function on $M_m(\sR)$, the rank of $\delta_m(x)$ is equal to the rank of $\delta_m(e)$; in particular, this holds for $\rho$ and $\fr ^{(m)}$. Using part (ii),  $\rho \big( \delta_m(x) \big) = \rho(\delta_m(e)) = m \fr ^{(1)}(e) = \fr ^{(m)} \big(\delta_m(e) \big) = \fr ^{(m)} \big( \delta_m(x) \big)$. From Theorem \ref{thm:pushout_rank}, we conclude that $\rho = \fr ^{(m)}$. The second part of the equality can be shown similarly.
\vskip 0.1in

\noindent (iv) Let $Y_1 \in M_m(\sR), Y_2 \in M_{n-m}(\sR)$ such that $X_1 Y_1 X_1 = X_1, X_2 Y_2 X_2 = Y_2$. Let $E_1 := X_1 Y_1 \in M_m(\sR)$ and $E_2 := X_2 Y_2 \in M_{n-m}(\sR)$ be idempotents. Since $E_1 \odot {\bf 0}_{n-m}$ and ${\bf 0}_m \odot E_2$ are mutually orthogonal, we have $\fr ^{(n)}(X \odot Y) = \fr ^{(n)}(E_1 \odot E_2) = \fr ^{(n)}(E_1 \odot {\bf 0}_{n-m}) + \fr ^{(n)}({\bf 0}_{m} \odot E_2).$ From part (iii), we conclude that $\fr ^{(n)}(X \odot Y) = \fr ^{(m)}(X_1) + \fr ^{(n-m)}(X_2)$.
\end{proof}

\begin{thm}
\label{thm:rank_sys_ring}
\textsl{
Let $\sR$ be a von Neumann regular ring and $\fr ^{(1)}$ be a rank-function on $\sR$. For $k \in \N$, let $\fr ^{(k)} := (\delta _k ^*)^{-1}(\fr ^{(1)})$. Then the mapping $\rho : M_{\mathrm{fin}}(\sR) \to \R ^{+}$ given by $\rho (A) := \fr ^{(k)}(A)$ for $A \in M_n(\sR)$, defines a $\R^{+}$-valued rank-system on $\sR$.
}
\end{thm}
\begin{proof}
This is immediate from Lemma \ref{lem:rank_fun_inv} and Lemma \ref{lem:rank_fun_rank_sys}, (iv).
\end{proof}

In Theorem \ref{thm:rank_sys_ring_conv}, it is shown that every $\R ^{+}$-valued rank-system on a von Neumann regular ring restricts to a rank-function on the ring.

\section{The $\fU \fR$-monoid of some rings of interest}
\label{sec:L_bezout}

In this section, we compute the $\fU \fR$-monoid of elementary divisor domains, and Murray-von Neumann algebras. Pertinent examples of elementary divisor domains include fields, univariate polynomial rings over fields, the ring of entire functions, and abelian Murray-von Neumann algebras.

\subsection{The $\fU \fR$-monoid of elementary divisor domains}
\label{subsec:EDD_mon}
Let $\sR$ be an elementary divisor domain. Let $D_{\mathrm{fin}}(\sR)$ be the set of diagonal matrices in $M_{\mathrm{fin}}(\sR)$. Every matrix in $M_n(\sR)$ is $\sim_n$-equivalent to a diagonal matrix. Thus $\fU \fR (\sR) = [D_{\mathrm{fin}}(\sR)]$ and $\fU \fR (\sR)$ is generated by $[\sR] = [M_1(\sR)]$. Note that every elementary divisor domain is a B\'e{}zout domain, and hence a GCD domain.

\begin{lem}
\label{lem:x_1-x}
\textsl{
Let $\sR$ be a commutative ring and $a, b \in \sR$ such that $\sR a + \sR b = \sR$. Then for all $c \in \sR$, we have $$\mathrm{rank}_u(ca) \odot \mathrm{rank}_u(cb) = \mathrm{rank}_u(cab) \odot \mathrm{rank}_u(c).$$
}
\end{lem}
\begin{proof}
There are elements $r, s$ in $\mathcal{R}$ be such that $ra+sb = 1$. Using Lemma \ref{lem:invert_matrix}, we note that the matrices
$
\begin{bmatrix}
sb & -a \\
r & 1\\
\end{bmatrix},
\begin{bmatrix}
b & 1\\
-ra & s\\
\end{bmatrix} 
$
are invertible as 
$\mathrm{det} \Big(
\begin{bmatrix}
sb & -a \\
r & 1\\
\end{bmatrix} \Big) = 1,
\mathrm{det} \Big(
\begin{bmatrix}
b & 1\\
-ra & s\\
\end{bmatrix} \Big) = 1
$. From the matrix computation below,
$$
\begin{bmatrix}
sb & -a \\
r & 1
\end{bmatrix}
\begin{bmatrix}
ca & 0 \\
0 & cb
\end{bmatrix}
\begin{bmatrix}
b & 1\\
-ra & s
\end{bmatrix} = 
\begin{bmatrix}
c a b  & 0\\
0 & c
\end{bmatrix},
$$
we conclude that $\mathrm{rank}_u(ca) \odot \mathrm{rank}_u(cb) = \mathrm{rank}_u(cab) \odot \mathrm{rank}_u(c)$.
\end{proof}

\begin{lem}
\label{lem:ufd_fund}
\textsl{
Let $\mathcal{R}$ be a B\'{e}zout domain. The mapping from $ \big( (\sR); \vee, \wedge \big)$ to $\big( \fU \fR(\sR); \odot \big)$ given by, $(x) \mapsto [x]$, is well-defined and an injective modular map.
}
\end{lem}
\begin{proof}
Let us denote the mapping by $\Phi$. Note that $(x) = (y)$ if and only if $x \sim_1 y$. Thus if $(x) = (y)$, we have $[x]=[y]$ which shows that $\Phi$ is well-defined.

Let $x, y \in \sR$ such that $[x] = [y]$. For some $k \in \N$, we have $x \odot \,{\bf 0}_{k-1} \sim_{k} y \odot \,{\bf 0}_{k-1}$. If $P, Q \in GL_k(\sR)$ such that $P(x \odot \,{\bf 0}_{k-1} ) Q = y \odot \,{\bf 0}_{k-1}$, then $P_{11}Q_{11} x = y$. Thus $\sR y \subseteq \sR x$. Similarly $\sR x \subseteq \sR y$ leading us to the conclusion that $(x) = (y)$. Thus $\Phi$ is injective.

Let $x, y \in \sR$. Since $\sR$ has the B\'{e}zout property, there exists $z \in \sR$ such that $\sR x + \sR y = \sR z$, and $(x) \vee (y) = (z)$. There are $x', y' \in \sR$ such that $x = z x', y = zy'$ and $\sR x'  + \sR y' = \sR$. Furthermore, $(x) \wedge (y) = (zx'y')$. By Lemma \ref{lem:x_1-x}, we have $$\Phi \big( (x) \big) \odot \Phi \big( (y) \big) = \Phi \big( (x) \wedge (y) \big) \odot \Phi \big( (x) \vee (y) \big).$$
Thus $\Phi$ is a modular map.
\end{proof}

For a B\'{e}zout domain $\sR$, by virtue of Lemma \ref{lem:ufd_fund}, we may identify $(\sR)$ with $[\sR]$. For $x \in \sR$, we use $[x]$ and $(x)$ interchangeably. The $\odot$-sum of two elements in $(\sR)$ has the obvious meaning derived from $[\sR]$, and lies in $\fU \fR(\sR)$.

\begin{thm}
\label{thm:fund_ufd_algo}
\textsl{
Let $\sR$ be a B\'{e}zout domain. Let $x_1 \le \cdots \le x_n$ be a multichain in $(\sR)$, and $y$ be an element of $(\sR)$. Let $y_1 = y \wedge x_1$, $y_i = (y \vee x_{i-1}) \wedge x_i$ for $2 \le i \le n$, and $y_{n+1} = y \vee x_n$. Then we have
\begin{itemize}
\item[(i)] $y_1 \le y_2 \le \cdots \le y_{n+1} $,
\item[(ii)] $y \odot (\odot_{i=1}^n x_i) = \odot_{i=1}^{n+1} y_i$.
\end{itemize} 
}
\end{thm}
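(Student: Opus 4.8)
The plan is to recognize this theorem as the specialization of the abstract lattice-theoretic Proposition~\ref{prop:lat_ord} to a concrete lattice and modular map. The relevant lattice is the divisibility lattice $(\fF^1(\mathcal{R}); \vee, \wedge)$, the relevant abelian semigroup is $(\sqcup_{i \in \mathbb{N}} \fF^i(\mathcal{R}); \odot)$, and the relevant modular map is the natural injection $\phi$ of the former into the latter.

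First I would verify that the hypotheses of Proposition~\ref{prop:lat_ord} are in force. Since $\mathcal{R}$ is a B\'e{}zout domain, it is in particular a GCD domain, so by Proposition~\ref{prop:prod_L1} the pair $(\fF^1(\mathcal{R}); \vee, \wedge)$ is a lattice (indeed a bounded distributive one). The set $\sqcup_{i \in \mathbb{N}} \fF^i(\mathcal{R})$ is an abelian semigroup under $\odot$, as was observed in the remark following Proposition~\ref{prop:fund_L_func}. Finally, Lemma~\ref{lem:ufd_fund} states precisely that the natural injection $\phi$ is modular, that is, $\phi(x_1) \odot \phi(x_2) = \phi(x_1 \wedge x_2) \odot \phi(x_1 \vee x_2)$ for all $x_1, x_2$ in $\fF^1(\mathcal{R})$.

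With these verifications in hand, both conclusions follow at once. Part (i) is purely order-theoretic and does not involve the semigroup at all: the inequalities $y_1 \le \cdots \le y_{n+1}$ are exactly the content of Proposition~\ref{prop:lat_ord}(i), since the definitions of $y_1, \ldots, y_{n+1}$ here coincide verbatim with those there. For part (ii), I would invoke Proposition~\ref{prop:lat_ord}(ii), which yields $\phi(y) \odot (\odot_{i=1}^n \phi(x_i)) = \odot_{i=1}^{n+1} \phi(y_i)$; since $\phi$ is the inclusion it may be suppressed, and the abstract additive notation of the semigroup is here the $\odot$-product, giving $y \odot (\odot_{i=1}^n x_i) = \odot_{i=1}^{n+1} y_i$.

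No genuine obstacle remains, as all the substantive work lies in the two results being applied: the modularity in Lemma~\ref{lem:ufd_fund} (which rests on the explicit $2 \times 2$ factorization available over a B\'e{}zout domain) and the inductive argument establishing Proposition~\ref{prop:lat_ord}. The only point demanding care is the translation of notation, namely confirming that the additive semigroup operation of the abstract statement is realized here by $\odot$ and that the inclusion map disappears from the final identities.
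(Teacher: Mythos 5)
Your proposal is correct and coincides with the paper's own proof, which simply cites Lemma~\ref{lem:ufd_fund} and Proposition~\ref{prop:lat_ord}; you have merely spelled out the verification of hypotheses (the lattice structure from Proposition~\ref{prop:prod_L1}, the semigroup structure of $\sqcup_{i \in \mathbb{N}} \fF^i(\mathcal{R})$ under $\odot$, and the modularity of the inclusion) that the paper leaves implicit. Nothing further is needed.
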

\begin{proof}
Follows from Lemma \ref{lem:ufd_fund} and Proposition \ref{prop:lat_ord}.
\end{proof}

\begin{thm}
\label{thm:div_chain_rep}
\textsl{
Let $\sR$ be a B\'{e}zout domain. For $n \in \mathbb{N}$ and a multisubset $X := \{ x_1, x_2, \ldots, x_n \}$ with support contained in $(\sR)$, we have $\odot_{i=1}^n x_i = \odot_{i=1}^n X_{(i)}$.}
\end{thm}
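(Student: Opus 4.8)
The plan is to recognize that this statement is precisely the $\odot$-valued analog of Theorem~\ref{thm:m_chain_rep}, and to deduce it by invoking that theorem with an appropriate choice of abelian semigroup and modular map. Specifically, I would take the lattice $\mathcal{L}$ to be $(\fF^1(\mathcal{R}); \vee, \wedge)$, the abelian semigroup to be $(\sqcup_{i \in \mathbb{N}} \fF^i(\mathcal{R}); \odot)$, and the map $\phi$ to be the natural inclusion of $\fF^1(\mathcal{R})$ into $\sqcup_{i \in \mathbb{N}} \fF^i(\mathcal{R})$.

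First I would verify that all the hypotheses of Theorem~\ref{thm:m_chain_rep} are met in this setting. The pair $(\sqcup_{i \in \mathbb{N}} \fF^i(\mathcal{R}); \odot)$ is an abelian semigroup by the remark following Proposition~\ref{prop:fund_L_func}. The lattice $(\fF^1(\mathcal{R}); \vee, \wedge)$ is distributive by Proposition~\ref{prop:prod_L1}. Finally, the inclusion $\phi$ is a modular map by Lemma~\ref{lem:ufd_fund}, which asserts exactly that $\phi(x_1) \odot \phi(x_2) = \phi(x_1 \wedge x_2) \odot \phi(x_1 \vee x_2)$ for $x_1, x_2$ in $\fF^1(\mathcal{R})$.

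With these identifications in place, Theorem~\ref{thm:m_chain_rep} yields $\odot_{i=1}^n \phi(x_i) = \odot_{i=1}^n \phi(X_{(i)})$; since $\phi$ is the inclusion, this is the desired equality $\odot_{i=1}^n x_i = \odot_{i=1}^n X_{(i)}$. The only care needed is translating the additive notation of Theorem~\ref{thm:m_chain_rep} (where the semigroup operation is written $+$ and iterated sums as $\sum$) into the multiplicative $\odot$-notation used here.

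Should a self-contained argument be preferred, I would instead mirror the induction in the proof of Theorem~\ref{thm:m_chain_rep} step for step, replacing each appeal to Proposition~\ref{prop:lat_ord} by its $\odot$-analog Theorem~\ref{thm:fund_ufd_algo} and each use of modularity by Lemma~\ref{lem:ufd_fund}. The main (and essentially only) obstacle is purely bookkeeping: confirming that the inclusion into the semigroup is well-defined and modular and that $\fF^1(\mathcal{R})$ is distributive. Once these are in hand, the combinatorial heart of the argument has already been carried out in Theorem~\ref{thm:m_chain_rep}, so I expect no genuinely difficult step to remain.
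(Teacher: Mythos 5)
Your proposal is correct and follows exactly the paper's own route: the paper proves this theorem by citing Lemma~\ref{lem:ufd_fund} (modularity of the inclusion of $\fF^1(\mathcal{R})$ into $(\sqcup_{i \in \mathbb{N}} \fF^i(\mathcal{R}); \odot)$) together with Theorem~\ref{thm:m_chain_rep}, which is precisely your argument. Your write-up merely makes explicit the hypothesis checks (distributivity via Proposition~\ref{prop:prod_L1}, the semigroup structure, and the notation translation) that the paper leaves implicit.
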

\begin{proof}
Follows from Lemma \ref{lem:ufd_fund} and Theorem \ref{thm:m_chain_rep}.
\end{proof}

\begin{prop}
\label{prop:div_chain_unique}
\textsl{
Let $\sR$ be a B\'{e}zout domain. For $n \in \mathbb{N}$ and multichains $x_1 \le \cdots \le x_n, y_1 \le \cdots \le y_n$ in $(\sR)$, we have $\odot_{i=1}^n x_i = \odot_{i=1}^n y_i$ if and only if $x_i = y_i$ for $1 \le i \le n$.
}
\end{prop}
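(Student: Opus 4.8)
The forward implication is immediate: if $x_i = y_i$ for every $i$ then the two multichains coincide, and hence so do their $\odot$-products. For the converse the plan is to exhibit a complete set of invariants of the $\sim_n$-class of $\odot_{i=1}^n x_i$ from which each member of the multichain can be read off. First I would unwind the definition of $\odot$: writing $x_i = [r_i]$ and $y_i = [s_i]$ with $r_i, s_i \in \mathcal{R}$, the hypothesis $\odot_{i=1}^n x_i = \odot_{i=1}^n y_i$ says exactly that $(r_1, \ldots, r_n) \sim_n (s_1, \ldots, s_n)$, i.e. there are $A, B \in GL_n(\mathcal{R})$ with $\mathrm{diag}(r_1, \ldots, r_n) = A\,\mathrm{diag}(s_1, \ldots, s_n)\,B$. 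Since the chains increase in the divisor order, we may take $r_1 \mid \cdots \mid r_n$ and $s_1 \mid \cdots \mid s_n$ up to associates. Thus the assertion is precisely the uniqueness of the invariant factors, and the task reduces to recovering them from the $\sim_n$-class.

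The key step is the invariance of the determinantal divisors. For $M \in M_n(\mathcal{R})$ and $1 \le k \le n$, let $D_k(M)$ denote a $\mathrm{GCD}$ of all $k \times k$ minors of $M$, which exists because a B\'e{}zout domain is in particular a GCD domain. I would show that $M = A N B$ with $A, B \in GL_n(\mathcal{R})$ forces $[D_k(M)] = [D_k(N)]$ in $\fF^1(\mathcal{R})$: by the Cauchy--Binet formula every $k \times k$ minor of a product $PQ$ is an $\mathcal{R}$-linear combination of the $k \times k$ minors of $Q$ (and of $P$), so $D_k(N) \mid D_k(NB) \mid D_k(ANB)$; applying the same to $N = A^{-1} M B^{-1}$ gives the reverse divisibility, whence the associate classes agree.

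Next I would compute $D_k$ on the relevant diagonal matrices. For $\mathrm{diag}(r_1, \ldots, r_n)$ the only possibly nonzero $k \times k$ minors are the principal ones $\prod_{j \in S} r_j$ indexed by $k$-subsets $S \subseteq \langle n \rangle$; since $r_1 \mid \cdots \mid r_n$, the GCD of these products is $r_1 \cdots r_k$. Hence $[D_k] = x_1 \cdots x_k$, the product being taken in the monoid $\fF^1(\mathcal{R})$, with the analogous formula for the $y$-chain. Combined with the previous step, this yields $x_1 \cdots x_k = y_1 \cdots y_k$ for every $1 \le k \le n$.

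Finally I would recover the individual factors by induction on $k$ together with cancellation. The base case $x_1 = [D_1] = y_1$ is the equality of the first determinantal divisors. Assuming $x_i = y_i$ for $i < k$, the common partial product $w := x_1 \cdots x_{k-1} = y_1 \cdots y_{k-1}$ satisfies $w \odot x_k = [D_k] = w \odot y_k$; if $w \ne [0]$ then $w$ is the class of a nonzero element and the cancellation law in the integral domain $\mathcal{R}$ gives $x_k = y_k$, while if $w = [0]$ then some earlier factor is $[0]$, and since $[0]$ is the greatest element of $\fF^1(\mathcal{R})$ by Proposition~\ref{prop:prod_L1}(ii) and the chains are increasing, we must have $x_k = y_k = [0]$. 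The main obstacle is the invariance of the determinantal divisors (the Cauchy--Binet step) together with correctly threading the possibility of zero entries through this final cancellation; once these are secured the conclusion follows mechanically, and in combination with Theorem~\ref{thm:div_chain_rep} it delivers a Smith-type normal form asserting that every element of $\sqcup_{i} \fF^{i}(\mathcal{R})$ is the $\odot$-product of a unique divisor multichain.
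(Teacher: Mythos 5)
Your proof is correct and follows essentially the same route as the paper: both identify the $\sim_n$-class invariants as the determinantal divisors $d_k$ (GCDs of $k \times k$ minors) of the diagonal matrices and use their invariance under left/right multiplication by elements of $GL_n(\mathcal{R})$. In fact your write-up is more careful than the paper's: the paper asserts $d_i(X) = x_i$, whereas the correct identity is $d_k = x_1 \cdots x_k$ as you compute, and your Cauchy--Binet justification together with the inductive cancellation step (including the $[0]$ case, where the chain condition forces $x_k = y_k = [0]$) supplies exactly the details the paper's two-line argument glosses over.
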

\begin{proof}
We follow the standard strategy that is used to prove the uniqueness of the Smith normal form for matrices over elementary divisor domains.

Let $x_1 = [r_1], \ldots, x_n = [r_n]$ and $y_1 = [s_1], \ldots, y_n = [s_n]$ for elements $r_1, \ldots, r_n,$ $ s_1, \ldots, s_n \in \mathcal{R}$. Let $X := \oplus_{i=1}^n r_i, Y := \oplus_{i=1}^n s_i$ be diagonal matrices in $M_n(\mathcal{R})$. Let $d_k(X)$ denote the GCD of the $(n-k+1) \times (n-k+1)$ minors of $X$. Define $d_k(Y)$ similarly. Clearly $d_k(X) = \prod_{i=n-k+1}^n x_i, d_k(Y) = \prod_{i=n-k+1}^n y_i$ for $1 \le k \le n$.

Since $PXQ = Y$ for some invertible matrices $P, Q \in GL_n(\sR)$, for $1 \le k \le n$, we have $d_k(Y) \le d_k(X)$ in $(\sR)$, and similarly $d_k(X) \le d_k(Y)$ in $(\sR)$. Thus $d_k(X) = d_k(Y)$ for $1 \le k \le n$. Note that $y_{k+1} d_k(Y) = x_{k+1} d_k(X)$ for $1 \le k \le n-1$. Moreover, $d_k(X) = [0]$ ($d_k(Y) = [0]$, respectively) if and only if $x_k = [0]$ ($y_k = [0]$, respectively). We conclude that $x_k = y_k$ for $1 \le k \le n$.
\end{proof}

\begin{cor}
\label{cor:odot_equal_prod}
\textsl{
Let $\sR$ be a B\'{e}zout domain. For $n \in \mathbb{N}$ and multisubsets $X := \{ x_1, \ldots, x_n \},$ $Y := \{ y_1, \ldots, y_n \}$ with their respective supports contained in $(\sR)$, we have $\odot_{i=1}^n x_i = \odot_{i=1}^n y_i$ if and only if 
$X_{(k)} = Y_{(k)}$ for $1 \le k \le n$.
}
\end{cor}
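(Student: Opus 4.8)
The plan is to combine Theorem \ref{thm:div_chain_rep} (which rewrites any $\odot$-product over a multiset in terms of its order statistics $X_{(k)}$, $Y_{(k)}$) with Proposition \ref{prop:div_chain_unique} (which establishes that a $\odot$-product over a \emph{sorted} multichain determines the multichain uniquely). The key observation is that for any finite multiset $X$ with support in $\fF^1(\mathcal{R})$, the sequence of order statistics $X_{(1)} \le X_{(2)} \le \cdots \le X_{(n)}$ is, by Remark \ref{rmrk:order_stat_latt}, already a divisor multichain. So the order-statistic operation $X \mapsto (X_{(1)}, \ldots, X_{(n)})$ converts an arbitrary multiset into a canonical sorted one without changing the $\odot$-product.

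First I would prove the forward direction. Suppose $\odot_{i=1}^n x_i = \odot_{i=1}^n y_i$. By Theorem \ref{thm:div_chain_rep} applied to each side, we have $\odot_{i=1}^n x_i = \odot_{i=1}^n X_{(i)}$ and $\odot_{i=1}^n y_i = \odot_{i=1}^n Y_{(i)}$, so the hypothesis becomes
\[
\odot_{i=1}^n X_{(i)} = \odot_{i=1}^n Y_{(i)}.
\]
Since $X_{(1)} \le \cdots \le X_{(n)}$ and $Y_{(1)} \le \cdots \le Y_{(n)}$ are both divisor multichains of length $n$, Proposition \ref{prop:div_chain_unique} applies directly and yields $X_{(k)} = Y_{(k)}$ for $1 \le k \le n$, which is exactly the desired conclusion.

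For the converse, suppose $X_{(k)} = Y_{(k)}$ for all $1 \le k \le n$. Then trivially $\odot_{i=1}^n X_{(i)} = \odot_{i=1}^n Y_{(i)}$, and invoking Theorem \ref{thm:div_chain_rep} on both sides again gives $\odot_{i=1}^n x_i = \odot_{i=1}^n X_{(i)} = \odot_{i=1}^n Y_{(i)} = \odot_{i=1}^n y_i$, completing the proof. I do not anticipate a serious obstacle here, since all the heavy lifting has already been done: Theorem \ref{thm:div_chain_rep} is the substantive ingredient (it relies on distributivity of the divisibility lattice via Theorem \ref{thm:m_chain_rep} and the modularity established in Lemma \ref{lem:ufd_fund}), and Proposition \ref{prop:div_chain_unique} handles uniqueness via determinant divisors. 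The only point requiring a moment's care is confirming that $X_{(k)}$ and $Y_{(k)}$ genuinely form increasing multichains so that Proposition \ref{prop:div_chain_unique} is applicable — but this is precisely the content of Remark \ref{rmrk:order_stat_latt}.
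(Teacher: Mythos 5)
Your proof is correct and follows exactly the paper's own route: the paper's proof of this corollary is the one-line ``Follows from Theorem \ref{thm:div_chain_rep} and Proposition \ref{prop:div_chain_unique},'' and you have simply spelled out those two steps (rewriting each product via order statistics, then invoking uniqueness of divisor multichain representations). Your added care in noting, via Remark \ref{rmrk:order_stat_latt}, that the order statistics form a genuine multichain is a useful explicit check, but it is the same argument.
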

\begin{proof}
Follows from Theorem \ref{thm:div_chain_rep} and Proposition \ref{prop:div_chain_unique}.
\end{proof}

\begin{cor}
\label{cor:div_chan_can1}
\textsl{
Let $\sR$ be a B\'{e}zout domain. For $n \in \mathbb{N}$ and $x_1, \ldots, x_n, y_1, \ldots, y_n$ in $(\sR)$, we have
\begin{itemize}
\item[(i)] if $(\odot_{i=1}^{n}x_i) \odot [0] = (\odot_{i=1}^{n}y_i) \odot [0]$, then $\odot_{i=1}^{n}x_i = \odot_{i=1}^{n}y_i$.
\item[(ii)] if $\odot_{i=1}^{n}x_i = \odot_{i=1}^{n}y_i$, then the multiplicity of $[0]$ in the multiset $\{ x_i : i \in \langle n \rangle \}$ is equal to the multiplicity of $[0]$ in the multiset $\{ y_i : i \in \langle n \rangle \} $.
\end{itemize} 
}
\end{cor}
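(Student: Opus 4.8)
The plan is to reduce both assertions to Corollary~\ref{cor:odot_equal_prod}, which converts an equality of $\odot$-products into the equality of the order statistics $X_{(k)}$, together with two elementary observations about how the top element $[0]$ of the lattice $\fF^1(\mathcal{R})$ behaves under joins and meets. First, by Proposition~\ref{prop:prod_L1}(ii) we have $x \le [0]$ for every $x$ in $\fF^1(\mathcal{R})$, so $[0]$ is the top element; consequently a meet of elements of $\fF^1(\mathcal{R})$ equals $[0]$ if and only if each of them equals $[0]$. Second, since $\mathcal{R}$ is an integral domain the LCM of two non-zero elements is non-zero (it divides their product, which is non-zero), so for $x,y$ in $\fF^1(\mathcal{R})$ we have $x \vee y = [0]$ if and only if $x = [0]$ or $y = [0]$; by associativity of $\vee$ this extends to finite joins. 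I would record these two facts at the outset.

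For part (i), write $X := \{x_1, \ldots, x_n\}$ and $Y := \{y_1, \ldots, y_n\}$, and let $X' := X \cup \{[0]\}$, $Y' := Y \cup \{[0]\}$ be the corresponding multisets of cardinality $n+1$; the hypothesis is exactly $\odot X' = \odot Y'$. The key computation is that adjoining a single copy of $[0]$ leaves the lower order statistics unchanged, i.e.\ $X'_{(k)} = X_{(k)}$ for $1 \le k \le n$ and $X'_{(n+1)} = [0]$. This follows by splitting the $k$-subsets of $X'$ into those avoiding the new $[0]$, whose joins are precisely the joins occurring in the computation of $X_{(k)}$, and those containing it, whose joins all equal $[0]$ and hence contribute nothing to the meet because $X_{(k)} \le [0]$; for $k \le n$ at least one subset of the first type exists, while for $k = n+1$ the unique full subset contains $[0]$, forcing $X'_{(n+1)} = [0]$. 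Applying Corollary~\ref{cor:odot_equal_prod} to $X', Y'$ gives $X'_{(k)} = Y'_{(k)}$ for $1 \le k \le n+1$; restricting to $1 \le k \le n$ yields $X_{(k)} = Y_{(k)}$, and a second application of Corollary~\ref{cor:odot_equal_prod}, now to $X$ and $Y$, gives $\odot_{i=1}^n x_i = \odot_{i=1}^n y_i$.

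For part (ii), the equality $\odot_{i=1}^n x_i = \odot_{i=1}^n y_i$ is by Corollary~\ref{cor:odot_equal_prod} equivalent to $X_{(k)} = Y_{(k)}$ for all $1 \le k \le n$, so it suffices to recover the multiplicity of $[0]$ in a multiset from its order statistics. Using the two observations above, $X_{(k)} = [0]$ holds if and only if every $k$-subset of $X$ contains a copy of $[0]$, which in turn holds if and only if there are fewer than $k$ non-$[0]$ entries in $X$; writing $m$ for the multiplicity of $[0]$ in $X$, this is precisely the statement that $X_{(k)} = [0]$ exactly when $k \ge n - m + 1$. Hence $m = \#\{\, k : X_{(k)} = [0] \,\}$, which is an invariant of the order statistics, and the analogous count for $Y$ agrees by $X_{(k)} = Y_{(k)}$, giving equal multiplicities of $[0]$.

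I expect the main obstacle to be the order-statistic bookkeeping in part (i), namely the clean verification that adjoining $[0]$ fixes $X_{(k)}$ for $k \le n$ and pushes the top statistic to $[0]$; this is the only step that requires handling the meet-of-joins defining $X_{(k)}$ directly, and it is where the role of $[0]$ as the top element (rather than any distributivity, which enters only through the cited Corollary~\ref{cor:odot_equal_prod}) must be used with care. Once this is settled, both parts fall out immediately.
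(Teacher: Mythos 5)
Your proof is correct, and it runs on the same engine as the paper's --- Theorem \ref{thm:div_chain_rep} and Proposition \ref{prop:div_chain_unique}, which you access packaged together as Corollary \ref{cor:odot_equal_prod} --- but the organization differs, more so in part (ii) than in part (i). For (i), the paper first sorts: by Theorem \ref{thm:div_chain_rep} one may assume $x_1 \le \cdots \le x_n$ and $y_1 \le \cdots \le y_n$, and then $x_1 \le \cdots \le x_n \le [0]$ and $y_1 \le \cdots \le y_n \le [0]$ are divisor multichains of length $n+1$, so Proposition \ref{prop:div_chain_unique} applied to the hypothesis gives $x_i = y_i$ termwise and the conclusion is immediate. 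Your order-statistic stability lemma ($X'_{(k)} = X_{(k)}$ for $k \le n$ and $X'_{(n+1)} = [0]$) is exactly the unsorted form of the trivial observation that appending the top element to a chain leaves the first $n$ order statistics unchanged; you pay with a meet-of-joins computation for not sorting first, but the content is the same. For (ii) your route is genuinely different: the paper applies part (i) successively to strip copies of $[0]$ from both multisets, reducing to the case where $X$ contains no $[0]$, and then compares top order statistics ($X_{(n)} = Y_{(n)} \ne [0]$), whereas you exhibit the multiplicity of $[0]$ as an explicit invariant of the order statistics, $m = \#\{\,k : X_{(k)} = [0]\,\}$, which makes (ii) independent of (i) and avoids the successive reduction altogether. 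Both versions rest on the two lattice facts you isolate (that $[0]$ is the top element, and that a join is $[0]$ only if some entry is $[0]$, i.e.\ the LCM of non-zero elements in an integral domain is non-zero); the paper uses the second silently in the step ``thus $X_{(n)} \ne [0]$'', so making it explicit is a small gain in rigor. One notational quibble: you write $X' := X \cup \{[0]\}$, but multiset \emph{union} takes the maximum of multiplicities and would not increase the cardinality when $[0]$ already occurs in $X$; what you mean (and use) is the multiset \emph{sum} $X \sqcup \{[0]\}$ in the sense of \S\ref{sec:prelim}.
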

\begin{proof}
(i) By virtue of Theorem \ref{thm:div_chain_rep}, we may assume without loss of generality that $x_1 \le \cdots \le x_n, y_1 \le \cdots \le y_n$. Considering the divisor multichains $[0] \le x_1 \le \cdots \le x_n, [0] \le y_1 \le \cdots \le y_n$ in $(\sR)$, using Proposition \ref{prop:div_chain_unique}, we have that $x_i = y_i, 1\le i \le n$. Thus $\odot_{i=1}^{n}x_i = \odot_{i=1}^{n}y_i$.

(ii) Using part (i) successively, without loss of generality, we may assume that $[0]$ does not belong to the multiset $\{ x_i : i \in \langle n \rangle \}$ and thus $X_{(n)} \ne [0]$. By Theorem \ref{thm:div_chain_rep} and Proposition \ref{prop:div_chain_unique}, we have $X_{(n)} = Y_{(n)}.$ Hence $Y_{(n)} \ne [0]$ and we conclude that $[0]$ does not belong to the multiset $\{ y_i : i \in \langle n \rangle \}$.
\end{proof}

\begin{remark}
\label{rmrk:summary}
Let $\sR$ be an elementary divisor domain. We summarize the results proved above giving a picture of the structure of $\fU \fR (\mathcal{R})$.
\begin{itemize}
\item[(i)] By Corollary \ref{cor:div_chan_can1}, every non-zero element $X$ of $\fU \fR (\mathcal{R})$ has a {\it unique} representation as $\odot_{i=1}^n x_i$ for a multichain $x_1 \le \cdots \le x_n$ in $(\sR)$ that does not contain $[0]$. We call this the multichain representation of $X$ with respect to $(\sR)$. 
\item[(iii)] Theorem \ref{thm:fund_ufd_algo} provides an algorithm to compute the $\odot$-sum of two elements in $\fU \fR (\mathcal{R})$ whose multichain representations with respect to $(\sR)$ are provided.
\end{itemize}
  
\end{remark}

\begin{remark}
For a Euclidean domain $\sR$, the lattice operations on $(\sR)$ may be computed using the Euclidean algorithm successively and this may be used to implement the algorithm in Theorem \ref{thm:fund_ufd_algo} to obtain the $\odot$-sum of two elements of $\fU \fR (\mathcal{R})$. For the ring of integers $\mathbb{Z}$ and the univariate polynomial ring $\K[t]$ over a field $\K$ ($= \R, \C$), the Euclidean algorithm is quite efficient in practice.
\end{remark}

\begin{prop}
\label{prop:surj_supp_L1}
\textsl{
Let $\mathcal{R}$ be an elementary divisor domain. For $[X]$ in $\fU \fR (\mathcal{R})$, let $(\odot_{i=1}^m x_i )$ be the multichain representation of $[X]$ with respect to $(\sR)$. Then the map $S : \big( \fU \fR(\mathcal{R}), \odot \big) \rightarrow (\Z ^{+}, +)$ defined by $$S([0]) = 0, S([X]) := m \textrm{ for } [X] \ne [0],$$ and the map $\Delta : (\fU \fR (\sR), \odot) \rightarrow \big( (\sR), \cdot \big)$ defined by $$\Delta([0]) := [1], \Delta([X]) := x_1 x_2 \cdots x_m$$ are both surjective monoid homomorphisms.
}
\end{prop}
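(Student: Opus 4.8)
The plan is to verify, for each of $S$ and $\Delta$, the three requirements---well-definedness, the homomorphism identity, and surjectivity---while leaning on the structural picture of $\fF(\mathcal{R})$ summarized in Remark \ref{rmrk:summary}. The cleanest route is first to observe that both maps can be read off directly from the multiset of nonzero entries of a representing tuple, rather than only from the canonical divisor multichain. Concretely, if $[\![X]\!]$ is represented by a tuple $X$ whose nonzero entries have associate classes forming a multiset $A = \{a_1, \dots, a_m\}$, then I would show $S([\![X]\!]) = m$ and $\Delta([\![X]\!]) = a_1 a_2 \cdots a_m$ (the product taken in $(\fF^1(R);\cdot)$). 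Granting this, the homomorphism property becomes transparent: the tuple $X \odot Y$ obtained by juxtaposition has nonzero-entry multiset equal to the multiset sum of those of $X$ and $Y$, so its cardinality adds and its product multiplies, yielding $S([\![X\odot Y]\!]) = S([\![X]\!]) + S([\![Y]\!])$ and $\Delta([\![X\odot Y]\!]) = \Delta([\![X]\!])\,\Delta([\![Y]\!])$.

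Well-definedness of $S$ and $\Delta$ is immediate from the uniqueness of the canonical divisor multichain representation (Remark \ref{rmrk:summary}(iii)), since both are defined as functions of that representation; what requires justification is that this representation's length and entry-product coincide with the cardinality and product of the nonzero-entry multiset of any representative. For the length, I would invoke the computation that the number of $[0]$'s among the order statistics $X_{(1)} \le \cdots \le X_{(n)}$ equals the number of zero entries of $X$: this uses that $[0]$ is the top of $\fF^1(R)$ together with the integral-domain fact that a join (LCM) of finitely many nonzero classes is again nonzero, so the zero order statistics are precisely the topmost ones. Deleting them recovers the canonical chain, whose length is therefore the count of nonzero entries; invariance under $\sim_n$ and under adjoining $[0]$'s then follows from Corollary \ref{cor:odot_equal_prod} and Corollary \ref{cor:div_chan_can1}. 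For the product, I would apply Theorem \ref{thm:m_chain_rep} to the modular valuation $\iota : (\fF^1(R);\vee,\wedge) \to (\fF^1(R);\cdot)$ furnished by Proposition \ref{prop:prod_L1}, with lattice the distributive lattice $\fF^1(R)$ and multiset $A$; this yields $a_1 \cdots a_m = A_{(1)} \cdots A_{(m)}$, and since the canonical chain of $[\![X]\!]$ is exactly $A_{(1)} \le \cdots \le A_{(m)}$, the product over the canonical chain equals $a_1 \cdots a_m$.

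With these characterizations in hand, surjectivity is straightforward: for $S$, given $m \in \mathbb{Z}_{+}$ the class of the length-$m$ constant tuple $(1, \dots, 1)$ (and $[\![0]\!]$ for $m = 0$) maps to $m$; for $\Delta$, given any nonzero $r \in \mathcal{R}$ the singleton class $[\![(r)]\!]$ maps to $[r]$, so every class of a nonzero element is attained, while $[\![0]\!] \mapsto [1]$. I would record here that the image of $\Delta$ is exactly the set of classes of nonzero elements, consistent with products of nonzero elements being nonzero in the integral domain $\mathcal{R}$. Finally I would note that both maps send the identity $[\![0]\!]$ to the respective identities ($0$ and $[1]$), completing the verification that they are monoid homomorphisms.

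The main obstacle I anticipate is not a hard estimate but the bookkeeping that guarantees no ``collapse into $[0]$'' when two canonical chains are merged: one must be certain that re-sorting the juxtaposed zero-free chains into order statistics neither creates a spurious $[0]$ term (which would wrongly shorten the chain and corrupt $S$) nor annihilates the product (which would send $\Delta$ to the absorbing element $[0]$). This is exactly where the integral-domain hypothesis enters, through the fact that the join of nonzero classes is nonzero, and it is the one place the argument genuinely uses that $\mathcal{R}$ is a domain rather than merely that $\fF^1(R)$ is a bounded distributive lattice.
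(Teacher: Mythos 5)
Your proof is correct and takes essentially the same route as the paper: both arguments come down to the fact that the canonical zero-free divisor multichain of $[\![X]\!] \odot [\![Y]\!]$ is the order-statistics rearrangement of the merged multiset of the two canonical chains --- using Theorem \ref{thm:div_chain_rep}, the modular-map identity of Theorem \ref{thm:m_chain_rep} together with Proposition \ref{prop:prod_L1} for $\Delta$, and the integral-domain fact that joins of nonzero classes remain nonzero to prevent collapse to $[0]$ --- and your tuple-level characterization of $S$ and $\Delta$ as the count and product of the nonzero entries of an arbitrary representative is only a mild repackaging of the paper's direct merging of two canonical chains. One remark (a defect shared with the paper's proof rather than a gap particular to yours): your correct observation that the image of $\Delta$ is exactly the set of classes of \emph{nonzero} elements shows that $[0] \in \fF^1(\mathcal{R})$ is never attained, so $\Delta$ is surjective only onto $\fF^1(\mathcal{R}) \setminus \{[0]\}$; the paper's assertion that $\Delta([\![r]\!]) = [r]$ for all $r \in \mathcal{R}$ fails at $r = 0$, where $\Delta([\![0]\!]) = [1]$ by definition.
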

\begin{proof}
Note that $S([X] \odot [0]) = S([X]) = S([X]) + S([0])$. Similarly $\Delta([X] \odot [0]) = \Delta([X]) = \Delta([X]) \Delta([0])$. So we need only consider the cases where neither $[X]$ nor $[Y]$ is $[0]$.

Let the  multichain representations for $[X], [Y]$ in $\fU \fR (\sR)$ be $(\odot_{i=1}^m x_i)$, $(\odot_{j=1}^n y_j)$, respectively. Consider the multisubsets $\chi := \{ x_1, \ldots, x_m \}, \upsilon := \{ y_1, \ldots, y_n \}$ with their respective supports in $(\sR)$. Let $\zeta := \chi \sqcup \upsilon$. Note that $\chi_{(m)} \ne [0], \upsilon_{(n)} \ne [0]$. As $\zeta_{(m+n)} = \chi_{(m)} \vee \upsilon_{(n)} \ne [0]$, by Remark \ref{rmrk:summary}, the divisor multichain representation of $[X] \odot [ Y ]$ is given by $[\odot_{i=1}^{m+n} \zeta_{(i)}]$. Thus $S([X] \odot [Y]) = m+n = S([X]) + S([Y])$ which proves that $S$ is a monoid homomorphism. The surjectivity of $S$ follows from the fact that $S(\odot_{i=1}^n [1]) = n$ for all $n \in \mathbb{N}$.

Next we turn our attention to $\Delta$.\ By Proposition \ref{prop:prod_L1} and Theorem \ref{thm:m_chain_rep}, we see that $\Delta \big( [X] \odot [Y] \big) = \Delta \big( [ \odot_{i=1}^{m+n} \zeta_{(i)} ] \big) =  \prod_{i=1}^{m+n} \zeta_{(i)} = \big( \prod_{i=1}^m x_i \big) \big( \prod_{j=1}^n y_j \big) = \Delta \big( [X] \big) \Delta \big( [Y] \big)$ which proves that $\Delta$ is a monoid homomorphism. The surjectivity of $\Delta$ follows from the fact that $\Delta \big( [x] \big) = (x)$ for $x$ in $\sR$.
\end{proof}

\begin{remark}
\label{rmrk:k_ne_l}
Note that $\big( (\K[t]), \cdot \big)$ is not singly generated as a monoid. This is because there is no non-trivial polynomial that divides every polynomial in $\K[t]$. By Proposition \ref{prop:surj_supp_L1}, we conclude that $\big( \fU \fR (\K[t]), \odot \big)$ is not singly generated as a monoid and hence $$\fU \fR (\K[t]) \ncong \Z ^{+} \cong K_0(\K[t])^{+}.$$ This shows that the $\fU \fR$-functor is distinct from the $K_0$-functor. 
\end{remark}

\begin{remark}
\label{rmrk:UFD_lat}
Let $Q$ be a UFD and $\mathbb{P} := \{ (x) : x \in Q$ is irreducible$\}$. An element $x$ of $\big( (Q); \cdot \big)$ may be uniquely written as the product of elements of $\mathbb{P}$ upto permutation. For every $p \in \mathbb{P}$, we define a mapping $\pi_p : (Q) \rightarrow \Z ^{+} \cup \{ \infty \}$ by $\pi_p(x) = \sup \{ \alpha \in \Z ^+  : p^{\alpha} \le x \}$ for $x \ne (0)$ and $\pi_p\big( (0) \big) = \infty$.

In essence, $\pi_p$ gives the multiplicity of $p$ in the factorization of $x \ne (0)$. Note that $\pi_p$ is a lattice homomorphism, and $x = y$ in $(Q)$ if and only if $\pi_p(x) = \pi_p(y)$ for all $p$ in $\mathbb{P}$.
\end{remark}

\begin{thm}
\label{thm:algo_rank_identity}
Let $Q$ be a PID (and hence, an EDD and UFD) and $\mathbb{P} := \{ (x) : x \in Q$ is irreducible$\}$. Let $X := \{ x_1, \ldots, x_n \}, Y := \{ y_1, \ldots, y_n \}$ be finite multisubsets of equal cardinality with their respective supports contained in $(Q)$. Then $\odot_{i=1}^n x_i = \odot_{i=1}^n y_i$ if and only if $\pi_p(X) = \pi_p(Y)$ (as multisets) for every $p \in \mathbb{P}$.
\end{thm}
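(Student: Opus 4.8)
The plan is to reduce to the order-statistic characterization of equality in $\fF(\mathcal{R})$ already established in Corollary \ref{cor:odot_equal_prod}, and then transport that characterization across the lattice homomorphisms $\pi_p$. Throughout, I read $\pi_p(X)$ as the multiset $\{ \pi_p(x_1), \ldots, \pi_p(x_n) \}$ with support in the chain $\mathbb{Z}_{+} \cup \{ \infty \}$ obtained by applying $\pi_p$ entrywise, and similarly for $\pi_p(Y)$.

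First I would invoke Corollary \ref{cor:odot_equal_prod}, which gives
$$\odot_{i=1}^n x_i = \odot_{i=1}^n y_i \quad \Longleftrightarrow \quad X_{(k)} = Y_{(k)} \textrm{ for all } 1 \le k \le n.$$
So it suffices to show that the family of equalities $X_{(k)} = Y_{(k)}$ (over all $k$) is equivalent to the family of equalities $\pi_p(X) = \pi_p(Y)$ (over all $p \in \mathbb{P}$). The bridge between the two is the observation that each $\pi_p$ is a lattice homomorphism, so by Remark \ref{rmrk:order_stat_latt} it commutes with the order-statistic operation, that is, $\pi_p(X_{(k)}) = \pi_p(X)_{(k)}$ and $\pi_p(Y_{(k)}) = \pi_p(Y)_{(k)}$ for all $k$ and all $p$.

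For the forward direction, suppose $X_{(k)} = Y_{(k)}$ for all $k$. Applying $\pi_p$ and using the commutation above gives $\pi_p(X)_{(k)} = \pi_p(Y)_{(k)}$ for all $k$. Since $\pi_p(X)$ and $\pi_p(Y)$ are multisets of equal cardinality $n$ supported in the chain $\mathbb{Z}_{+} \cup \{ \infty \}$, and the order statistics of a multichain are precisely its sorted list of elements (Remark \ref{rmrk:order_stat_latt}), agreement of all order statistics forces $\pi_p(X) = \pi_p(Y)$ as multisets. Conversely, if $\pi_p(X) = \pi_p(Y)$ for all $p$, then in particular their order statistics agree, so $\pi_p(X)_{(k)} = \pi_p(Y)_{(k)}$, hence $\pi_p(X_{(k)}) = \pi_p(Y_{(k)})$ for every $p \in \mathbb{P}$ and every $k$. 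By Lemma \ref{lem:UFD_lat}, equality of all prime valuations forces $X_{(k)} = Y_{(k)}$ for each $k$, which is exactly the condition supplied by Corollary \ref{cor:odot_equal_prod}.

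The steps are essentially bookkeeping once the right dictionary is set up; the only point demanding care is the interplay of the two distinct order-statistic operations --- the one on the $\fF^1(Q)$-valued multisets $X, Y$ (built from the lattice meet and join) and the one on the chain-valued multisets $\pi_p(X), \pi_p(Y)$ (which is literally sorting). I expect the main obstacle to be verifying cleanly that these are intertwined by $\pi_p$, which rests entirely on $\pi_p$ being a lattice homomorphism together with Remark \ref{rmrk:order_stat_latt}; the presence of the value $\infty = \pi_p([0])$ causes no difficulty, since $\mathbb{Z}_{+} \cup \{ \infty \}$ is itself a chain with $\infty$ as its top element.
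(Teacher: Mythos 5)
Your proposal is correct and follows essentially the same route as the paper's own proof: both reduce the monoid equality to the order-statistic equalities $X_{(k)} = Y_{(k)}$ (via Corollary \ref{cor:odot_equal_prod}, which the paper invokes through Theorem \ref{thm:div_chain_rep}), and then transfer these across the lattice homomorphisms $\pi_p$ using Remark \ref{rmrk:order_stat_latt} and Lemma \ref{lem:UFD_lat}, concluding with the observation that multisets supported in a chain are determined by their order statistics. Your extra care about the value $\infty = \pi_p([0])$ is a slight improvement in precision over the paper, which writes $\mathbb{Z}_{+}$ where $\mathbb{Z}_{+} \cup \{\infty\}$ is meant, but the argument is otherwise identical.
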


\begin{proof}
By Theorem \ref{thm:div_chain_rep}, we have $\odot_{i=1}^n x_i = \odot_{i=1}^n y_i$ if and only if  $X_{(k)} = Y_{(k)}$ for all $k \in \langle n \rangle$. By Remark \ref{rmrk:UFD_lat} and Remark \ref{rmrk:order_stat_latt}, for $1 \le k \le n$, we see that $X_{(k)} = Y_{(k)} \Longleftrightarrow \pi_p(X_{(k)}) = \pi_p(Y_{(k)})$ for every $p \in \mathbb{P}$ $ \Longleftrightarrow \pi_p(X)_{(k)} = \pi_p(Y)_{(k)}$ for every $p \in \mathbb{P}$. 

As $\Z ^{+}$ is totally ordered, any multisubset with support contained in $\Z ^+$ is a multichain and we conclude that $\pi_p(X)_{(k)} = \pi_p(Y)_{(k)}$ for every $p \in \mathbb{P}$ and every $k \in \langle n \rangle$ if and only if $\pi_p(X) = \pi_p(Y)$ for every $p \in \mathbb{P}$.
\end{proof}

In \S \ref{sec:ranked_rings}, we use Theorem \ref{thm:algo_rank_identity} to formulate Algorithm \ref{subsec:algo_rank_id} for generating rank identities.

\begin{remark}
Let $\K$ be a field. Note that $\K$ is an elementary divisor domain and $(\K)$ has two elements, $(0)$ and $(1)$. For $X \in M_{\mathrm{fin}}(\K)$, the multichain representation of $[X]$ with respect to $(\K)$ is of the form $\odot _{i=1}^n [1]$. Thus the mapping $S$ as defined in Proposition \ref{prop:surj_supp_L1} is an isomorphism from $\big( \fU \fR (\K); \odot \big)$ and $(\Z ^{+}; +)$ and coincides with the usual rank function on square matrices over $\K$. This also shows that the universal rank-system for $\K$ is given by the usual rank function.
\end{remark}

\subsection{The $\fU \fR$-monoid of Murray-von Neumann algebras}
\label{subsec:UR_mon_MvN}

\begin{lem}
\label{lem:ran_equiv}
\textsl{
Every element of a Murray-von Neumann algebra is $\sim_1$-equivalent to its range projection.
}
\end{lem}
\begin{proof}
Let $\fM$ be a Murray-von Neumann algebra. Let $x \in \fM$ and $x = up$ be the unitary polar decomposition of $x$, where $p$ is a positive element in $\fM$ and $u$ is a unitary element in $\fM$. Note that $p + N(p)$ is invertible in $\fM$ and $\big( p+N(p) \big)^{-1} p = R(p)$. Thus $p \sim_1 R(p)$. Since $R(p) = R(u^*x) = u^*R(x)u$, we conclude that $R(x) \sim_1 R(p) \sim_1 p \sim_1 x$.
\end{proof}

\begin{cor}
\label{cor:ran_equiv}
\textsl{
Let $\mathscr{R}$ be a finite von Neumann algebra. For $x, y \in \afr$, $x \sim_1 y$ if and only if $\fr _c(x) = \fr _c(y)$.
}
\end{cor}
\begin{proof}
If $x \sim_1 y$, it follows from Proposition \ref{prop:rank_prop}, (vii), that $\fr _c(x) = \fr _c(y)$. For the converse, let $\fr _c(x) = \fr _c(y)$. Thus $R(x)$ and $R(y)$ are $\sim_{MV}$-equivalent projections in $\mathscr{R}$. There is a unitary element $u$ in $\mathscr{R}$ such that $uR(x)u^* = R(y)$. Thus $R(x) \sim_1 R(y)$ in $\afr$ and it follows from Lemma \ref{lem:ran_equiv} that $x \sim_1 R(x) \sim_1 R(y) \sim_1 y$ in $\afr$.
\end{proof}

\begin{thm}
\label{thm:UR_mon_MvN}
\textsl{
Let $\mathscr{R}$ be a finite von Neumann algebra with center $\mathscr{C}$. Let $P(\mathscr{C})$ denote the set of projections in $\mathscr{C}$, and $\Z ^+[P(\mathscr{C})] \subset \mathscr{C}$ denote the set of $\Z ^+$-linear combinations of projections in $\mathscr{C}$. 
\begin{itemize}
\item[(i)] If $\mathscr{R}$ is of type $I_n$ (for $n \in \N$), then $\fU \fR (\afr) \cong \Z^{+}[P(\mathscr{C})]$;
\item[(ii)] If $\mathscr{R}$ is of type $II_1$, then $\fU \fR (\afr) \cong \mathscr{C}^{+}$.
\end{itemize}
}
\end{thm}
\begin{proof}
In view of Theorem \ref{thm:univ_rank_sys}, consider the monoid homomorphism $\varphi : \fU \fR (\afr) \to \mathscr{C}^+$ given by $\varphi(\mathrm{rank}_u(X)) = \fr _c(X)$, for $X \in M_{\mathrm{fin}}(\afr)$. 

Let $X_1 \in M_{k_1}(\afr), X_2 \in M_{k_2}(\afr)$ such that $\fr _c(X_1) = \fr _c(X_2)$. Without loss of generality, we may assume that $k_1 \le k_2$. Since $\fr _c(X_1 \odot {\bf 0}_{k_2 - k_1}) = \fr _c(X_2)$, by Corollary \ref{cor:ran_equiv}, $X_1 \odot {\bf 0}_{k_2 - k_1} \sim_{k_2} X_2$. Thus $\mathrm{rank}_u(X_1) = \mathrm{rank}_u(X_2)$. This shows that $\varphi$ is injective.
\vskip 0.1in

\noindent (i) If $\mathscr{R}$ is of type $I_n$, then it is $*$-isomorphic to $M_n(\mathscr{C})$ (see \cite[Theorem 6.6.5]{kadison-ringrose2}). Thus $\afr$ and $M_n(\afc)$ are isomorphic as Murray-von Neumann algebras (see \cite[Remark 4.19]{nayak_mvna}). Note that $\fU \fR (\afr) \cong \fU \fR(M_n(\afc)) \cong \fU \fR (\afc)$ by Corollary \ref{cor:UR_mon_morita}. For $x \in \afc$, we have $\fr _c(x) = R(x) \in \mathscr{C}$. For $X \in M_n(\afc)$, from Lemma \ref{lem:ran_equiv}, we have $X \sim_1 R(X)$. By the diagonalization theorem (see \cite[Corollary 3.20]{diag_kadison}), there are projections $e_1, \ldots, e_n \in \mathscr{C}$ such that $R(X)$ is unitarily equivalent to $\odot_{i=1}^n e_i$, or in particular, $R(X) \sim_n \odot_{i=1}^n e_i$. Thus $\fr _c(X) = \sum_{i=1}^n e_i \in \Z^{+}[P(\mathscr{C})]$. 

On the other hand, for $x \in \Z^{+}[P(\mathscr{C})]$, there are projections $e_1, \ldots, e_m \in \mathscr{C}$ such that $x = \sum_{i=1}^m e_i$. Thus $\fr _c(\odot_{i=1}^m e_i) = x$. In summary, the image of $\varphi$ is $\Z^{+}[P(\mathscr{C})]$.
\vskip 0.1in

\noindent (ii) Let $c \in \mathscr{C}^{+}$. Let $n \in \N$ such that $0 \le c \le n 1$. Since $0 \le \frac{1}{n} c \le 1$ and $\mathscr{R}$ is of type $II_1$, there is a projection $e \in \mathscr{R}$ such that $\fr _c(e) = \frac{1}{n}c$. Note that $\fr _c (\odot_{i=1}^n e) = c$ so that $\varphi([\odot_{i=1}^n e]) = c$. This shows that $\varphi$ is surjective.
\end{proof}

Since every Murray-von Neumann algebra is a direct sum of Murray-von Neumann algebras of types $I_n$ and $II_1$, Theorem \ref{thm:UR_mon_MvN} gives a complete characterization of the $\fU \fR$-monoid of Murray-von Neumann algebras.

\section{Rank Identities}
\label{sec:rank_id}

In this section, we define the notion of a {\it rank identity} in a ranked ring, and as an application of the results in \S \ref{sec:L_bezout}, we exhibit abstract rank identities in the univariate polynomial ring, $\K[t]$, and the ring of entire functions, $\sO(\C)$. In \S \ref{subsec:rank_id_idem}, we obtain several rank identities involving two idempotents in a ranked ring with an eye towards applications in \S \ref{sec:app} in the context of von Neumann regular rings and operator algebras.

Throughout this section, we use $\mathcal{R}$ to denote a ring and $(\sM, +)$ to denote a commutative monoid. Appropriate subscripts are introduced when multiple rings or commutative monoids are being discussed. The additive identities in both $\sR$ and $\sM$ are denoted by $0$, and the multiplicative identity in $\sR$ by $1$. Their usage is clear from the context.

\begin{definition}
Let $\sR$ be a $\rho$-ranked ring. A {\it rank identity} or {\it rank equality} in $(\sR, \rho)$ is an equation of the form, $\rho(X) = \rho(Y)$, for $X, Y \in M_{\mathrm{fin}}(R)$.
\end{definition}

\begin{prop}
\label{prop:rank_id_fund}
Let $\sR_1$ be a ring, and $\sR_2$ be a $\rho$-ranked ring. Let $\Phi : \sR_1 \to \sR_2$ be a ring homomorphism. For $X, Y \in M_{\mathrm{fin}}(\sR_1)$, if $\mathrm{rank}_{u}(X) = \mathrm{rank}_{u}(Y)$, then $\rho(\Phi(X)) = \rho(\Phi(Y))$.
\end{prop}
\begin{proof}
This is straightforward from the definition of $\fU \fR (\mathcal{R})$ and the properties \ref{def:ranked}(a), (b), (c) for $\rho$.
\end{proof}

\subsection{Rank identities in the univariate polynomial ring over a field}
\label{subsec:rank_id}

\begin{thm}
\label{thm:rank_id_pol}
\textsl{
Let $\mathcal{R}$ be a $\rho$-ranked ring with a field $\K$ in its center. For non-zero polynomials $p_1, \ldots, p_n$, $q_1, \ldots, q_n$ in $\K[t]$, if
\begin{equation}
\label{eqn:wedge_vee}
\wedge_{1\le i_1 < \cdots < i_k  \le n} ([p_{i_1}] \vee \cdots \vee [p_{i_k}]) = \wedge_{1 \le i_1 < \cdots < i_k \le n} ([q_{i_1}] \vee \cdots \vee [q_{i_k}] ), 1 \le k \le n,
\end{equation}
then for $x$ in $\mathcal{R}$, we have
$$\sum_{i=1}^n \rho \big( p_i(x) \big) = \sum_{j=1}^n \rho \big( q_j(x) \big).$$
}
\end{thm}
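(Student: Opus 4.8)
The plan is to recognize the hypothesis as a single equation in the $\fF$-monoid of $\K[t]$, transport that equation to $\fF(\mathcal{R})$ via functoriality of $\fF$ applied to the evaluation homomorphism, and then read off the rank identity from Proposition \ref{thm:rank_id_fund}. First I would note that $\K[t]$ is a PID, hence a B\'e{}zout domain, so the structure theory of \S\ref{sec:L_bezout} applies. Setting $X := \{ [p_1], \ldots, [p_n] \}$ and $Y := \{ [q_1], \ldots, [q_n] \}$, finite multisubsets of cardinality $n$ with support in $\fF^1(\K[t])$, the two sides of the hypothesis \eqref{eqn:wedge_vee} are by definition precisely $X_{(k)}$ and $Y_{(k)}$. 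Thus the hypothesis asserts $X_{(k)} = Y_{(k)}$ for all $1 \le k \le n$, which by Corollary \ref{cor:odot_equal_prod} is equivalent to $\odot_{i=1}^n [p_i] = \odot_{i=1}^n [q_i]$ in $\fF'(\K[t])$. Passing through the quotient map $\fF'(\K[t]) \rightarrow \fF(\K[t])$ and using that it respects $\odot$, this yields $\odot_{i=1}^n [\![ p_i ]\!] = \odot_{i=1}^n [\![ q_i ]\!]$ in $\fF(\K[t])$.

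Next I would introduce the evaluation homomorphism $\mathrm{ev}_x : \K[t] \rightarrow \mathcal{R}$ given by $p \mapsto p(x)$. Because $\K$ lies in the center of $\mathcal{R}$, the element $x$ commutes with every coefficient, so $\mathrm{ev}_x$ is a genuine unital ring homomorphism. By Proposition \ref{prop:L_functor}, $\fF(\mathrm{ev}_x) : \fF(\K[t]) \rightarrow \fF(\mathcal{R})$ is a homomorphism of commutative monoids, and by construction it sends $[\![ p ]\!]$ to $[\![ p(x) ]\!]$. Applying $\fF(\mathrm{ev}_x)$ to both sides of the equality obtained in the previous step and using that it preserves $\odot$, I obtain $\odot_{i=1}^n [\![ p_i(x) ]\!] = \odot_{i=1}^n [\![ q_i(x) ]\!]$ in $\fF(\mathcal{R})$. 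Finally, Proposition \ref{thm:rank_id_fund} converts this identity in $\fF(\mathcal{R})$ directly into $\sum_{i=1}^n \rho(p_i(x)) = \sum_{j=1}^n \rho(q_j(x))$, which is the desired conclusion.

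The conceptual heart of the argument, and the step I expect to carry the most weight, is the translation of the lattice-theoretic hypothesis into an equation in $\fF'(\K[t])$; this rests squarely on the B\'e{}zout-domain analysis of \S\ref{sec:L_bezout}, in particular the uniqueness of divisor-multichain representations (Proposition \ref{prop:div_chain_unique}) underlying Corollary \ref{cor:odot_equal_prod}. Once that translation is secured, everything else is formal: functoriality (Proposition \ref{prop:L_functor}) handles the passage from $\K[t]$ to $\mathcal{R}$, and the defining axioms of a rank-system — invariance under multiplication by invertibles and additivity on block diagonals, both packaged into Proposition \ref{thm:rank_id_fund} — handle the passage from the $\fF$-monoid back to $G$. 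The only genuine verification needed along the way is that $\mathrm{ev}_x$ is a ring homomorphism, and here the centrality of $\K$ is exactly what makes the coefficients commute with $x$; without it the map would fail to be multiplicative. As a sanity check, taking $n=2$ with $p_1 = t$, $p_2 = 1-t$, $q_1 = 1$, $q_2 = t - t^2$ recovers the motivating identity \eqref{eqn:rank_id}, since $[t] \wedge [1-t] = [1] = [1] \wedge [t-t^2]$ and $[t] \vee [1-t] = [t-t^2] = [1] \vee [t-t^2]$.
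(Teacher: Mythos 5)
Your proposal is correct and follows essentially the same route as the paper's own proof: translate the hypothesis \eqref{eqn:wedge_vee} into the equality $\odot_{i=1}^n [\![ p_i ]\!] = \odot_{i=1}^n [\![ q_i ]\!]$ in $\fF(\K[t])$ via Corollary \ref{cor:odot_equal_prod}, push it into $\fF(\mathcal{R})$ using the evaluation homomorphism (well-defined by centrality of $\K$) together with functoriality (Proposition \ref{prop:L_functor}), and conclude with Proposition \ref{thm:rank_id_fund}. Your write-up merely adds some explicit bookkeeping (the $X_{(k)}$, $Y_{(k)}$ identification and the passage through $\fF'(\K[t])$) that the paper leaves implicit.
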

\begin{proof}
Note that if equation (\ref{eqn:wedge_vee}) holds, then by Corollary \ref{cor:odot_equal_prod}, we have $\odot_{i=1}^n [p_i] = \odot_{i=1}^n [q_i]$ in $\fU \fR (\K[t])$. As $\K$ is in the center of $\mathcal{R}$, the element $x$ commutes with $\K$. By the universal property of the univariate polynomial ring over $\K$, the evaluation map $\sE : \K[t] \rightarrow \mathcal{R}$ where $\sE (t) = x$, defines a ring homomorphism sending $p$ to $p(x)$. By Proposition \ref{prop:rank_id_fund} and property \ref{def:ranked} (a) for $\rho$, we have $$\sum_{i=1}^n \rho \big( \sE (p_i) \big) = \sum_{i=1}^n \rho \big( \sE (q_i) \big).$$
\end{proof}

\begin{remark}
Note that by Corollary \ref{cor:div_chan_can1}, (ii), for non-zero polynomials $p_1, \ldots, p_m, q_1, \ldots, q_n$ in $\K[t]$, if $\odot_{i=1}^m [p_i] = \odot_{i=1}^n [q_i]$, then $m=n$.
\end{remark}

\begin{prop}[partial converse to Theorem \ref{thm:rank_id_pol}]
\label{prop:exhaustive}
\textsl{
Let $\sM$ be a cancellative monoid and $\K$ be a field. Let $p_1, \ldots, p_m, q_1, \ldots, q_n$ be non-zero polynomials in $\K[t]$ such that for any $(\sM, \rho)$-ranked $\K$-algebra $\sR$ we have
\begin{equation}
\label{eqn:rank_id_conv}
\sum_{i=1}^m \rho \big( p_i(x) \big) = \sum_{j=1}^n \rho \big( q_j(x) \big) , \forall x \in \sR.
\end{equation} Then $m = n$  and 
\begin{equation}
\label{eqn:wedge_vee_conv}
\wedge_{1\le i_1 < \cdots < i_k  \le n} ([p_{i_1}] \vee \cdots \vee [p_{i_k}]) = \wedge_{1 \le i_1 < \cdots < i_k \le n} ([q_{i_1}] \vee \cdots \vee [q_{i_k}] ), 1 \le k \le n.
\end{equation}
}
\end{prop}
\begin{proof}
Using Theorem \ref{thm:rank_id_pol}, without loss of generality,  we may assume that $[p_1] \le  [p_2] \le  \cdots \le  [p_m]$, and $[q_1] \le [q_2] \le \cdots \le [q_n]$. We show that $m = n$ and $[p_i] = [q_i]$ for $i = 1, 2, \ldots, n$.

Let, if possible, $[p_1] \ne [q_1]$. Without loss of generality, we may assume that there is a non-constant polynomial $r$ in $\K [t]$ such that $r | p_1$ and $r \nmid q_1$. Choose $k \in \N$ and $A \in M_k(\K) \subseteq M_k(\sR)$ with minimal polynomial $r$. Then $p_1(A) = p_2(A) = \cdots = p_m(A) = 0$. On the other hand, $q_1(A) \ne 0$ so that the rank identity (\ref{eqn:rank_id_conv}) fails to hold for $A$ which is an element of the ranked $\K$-algebra $M_k(\sR)$. This leads to a contradiction and hence $[p_1] = [q_1]$. The rest of the claim follows from an easy induction argument using the cancellation property of $\sM$.
\end{proof}

\subsubsection{An Algorithm to Generate Rank Identities\\}
\label{subsec:algo_rank_id}
Let $\K$ be a field.
\begin{itemize}
\item[1.] Start with distinct irreducible polynomials $p_1, \ldots, p_n$ in $\K[t]$.
\item[2.] Construct an $m \times n$ matrix $\Lambda = (\lambda_{ij}), (i, j) \in \langle m \rangle \times \langle n \rangle$ with entries from $\mathbb{Z}_+$. Define $q_i := \prod_{j=1}^n p_j^{\lambda_{ij}}$ for $1 \le i \le m$. 
\item[3.] Shuffle the entries in each column of $\Lambda$ to obtain an $m \times n$ matrix $\mu = (\mu_{ij}),(i, j) \in \langle m \rangle \times \langle n \rangle$ with entries from $\mathbb{Z}_{+}$. Define $r_i := \prod_{j=1}^n p_j^{\mu_{ij}}$ for $1 \le i \le m$. 
\end{itemize}
We say that $\mu$ is an {\it intra-column shuffle} of $\Lambda$ and vice versa. We say that the tuple of polynomials $(q_1, \ldots, q_n)$ is associated with the matrix $\Lambda$, and $(r_1, \ldots, r_n)$ is associated with $\mu$ (with respect to $\{ p_1, \ldots, p_n \}$). Note that $\K[t]$ is a PID and hence a UFD. By Theorem \ref{thm:algo_rank_identity} and Theorem \ref{thm:rank_id_pol}, for an element $x$ of a ranked $\K$-algebra, we have the following rank identity :
\begin{equation}
\label{algo:rank_id}
\sum_{i=1}^m \rho \big( q_i(x) \big) = \sum_{i=1}^m \rho \big( r_i(x) \big).
\end{equation}

\begin{thm}
\label{thm:rank_entire_id}
\textsl{
Let $\mathcal{R}$ be a $\rho$-ranked complex Banach algebra. For entire functions $f_1, \ldots, f_n$, $g_1, \ldots, g_n$ in $\mathcal{O}(\mathbb{C})$ if 
\begin{equation}
\label{eqn:wedge_vee_entire}
\wedge_{1\le i_1\le \cdots \le i_k  \le n} ([f_{i_1}] \vee \cdots \vee [f_{i_k}]) = \wedge_{1\le i_1\le \cdots \le i_k  \le n} ([g_{i_1}] \vee \cdots \vee [g_{i_k}] ), 1 \le k \le n,
\end{equation}
then for $x$ in $\mathcal{R}$, we have
$$\sum_{i=1}^n \rho \big( f_i(x) \big) = \sum_{i=1}^n \rho \big( g_i(x) \big).$$
}
\end{thm}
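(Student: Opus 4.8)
The plan is to imitate the proof of Theorem \ref{thm:rank_id_pol} line for line, replacing the polynomial ring $\K[t]$ by the ring of entire functions $\mathcal{O}(\C)$ and replacing the evaluation map supplied there by the universal property of $\K[t]$ with the \emph{entire functional calculus} $f \mapsto f(x)$. The formal skeleton is identical: convert the lattice hypothesis into an identity in the monoid $\fF(\mathcal{O}(\C))$, push it forward to $\fF(\mathcal{R})$ along a ring homomorphism $\mathcal{O}(\C) \to \mathcal{R}$ using functoriality of $\fF$, and read off the rank equality from Proposition \ref{thm:rank_id_fund}. The only genuinely new ingredient — and the one step that uses the Banach-algebra hypothesis rather than pure ring theory — is that the functional calculus is a unital ring homomorphism.

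First I would record that $\mathcal{O}(\C)$ is a B\'ezout domain, so the entire structure theory of \S\ref{sec:L_bezout} applies to its divisibility lattice $\fF^1(\mathcal{O}(\C))$. Writing $F := \{ [f_1], \ldots, [f_n] \}$ and $G := \{ [g_1], \ldots, [g_n] \}$ for the corresponding multisubsets of $\fF^1(\mathcal{O}(\C))$, the hypothesis (\ref{eqn:wedge_vee_entire}) is precisely the statement that $F_{(k)} = G_{(k)}$ for $1 \le k \le n$. By Corollary \ref{cor:odot_equal_prod} this is equivalent to $\odot_{i=1}^n [f_i] = \odot_{i=1}^n [g_i]$, and hence to $\odot_{i=1}^n [\![ f_i ]\!] = \odot_{i=1}^n [\![ g_i ]\!]$ in $\fF(\mathcal{O}(\C))$.

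The crux is the evaluation homomorphism. Since $\mathcal{R}$ is a complex Banach algebra, the scalars $\C \cdot 1$ are central and the spectrum $\sigma(x)$ of a fixed $x \in \mathcal{R}$ is compact, so $f(x)$ is well-defined for every entire $f$ by the holomorphic functional calculus; concretely, writing $f(z) = \sum_{m \ge 0} a_m z^m$, the series $\sum_{m \ge 0} a_m x^m$ converges absolutely in $\mathcal{R}$ because $f$ has infinite radius of convergence and $\| a_m x^m \| \le |a_m| \, \|x\|^m$. I would then verify that $\mathcal{E} : \mathcal{O}(\C) \to \mathcal{R}$, $\mathcal{E}(f) := f(x)$, is a unital ring homomorphism: additivity and unitality are immediate, while multiplicativity follows from the Cauchy product formula together with the absolute convergence of the series involved (equivalently, from the standard multiplicativity of the Riesz--Dunford calculus). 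I expect this multiplicativity to be the main obstacle, in that it is the only point where completeness of $\mathcal{R}$ is essential; the rest is formal.

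With $\mathcal{E}$ established as a ring homomorphism, Proposition \ref{prop:L_functor} yields a monoid homomorphism $\fF(\mathcal{E}) : \fF(\mathcal{O}(\C)) \to \fF(\mathcal{R})$ carrying $[\![ f ]\!]$ to $[\![ f(x) ]\!]$. Applying $\fF(\mathcal{E})$ to the identity of the previous paragraph gives $\odot_{i=1}^n [\![ f_i(x) ]\!] = \odot_{i=1}^n [\![ g_i(x) ]\!]$ in $\fF(\mathcal{R})$, whence Proposition \ref{thm:rank_id_fund} delivers $\sum_{i=1}^n \rho(f_i(x)) = \sum_{i=1}^n \rho(g_i(x))$, completing the argument.
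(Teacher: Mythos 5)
Your proposal is correct and follows exactly the paper's own argument: convert the hypothesis into $\odot_{i=1}^n [\![ f_i ]\!] = \odot_{i=1}^n [\![ g_i ]\!]$ in $\fF(\mathcal{O}(\mathbb{C}))$ via Corollary \ref{cor:odot_equal_prod}, push forward along the ring homomorphism $\mathcal{E} : \mathcal{O}(\mathbb{C}) \to \mathcal{R}$ given by the holomorphic function calculus using functoriality of $\fF$ (Proposition \ref{prop:L_functor}), and conclude with Proposition \ref{thm:rank_id_fund}. The only difference is that you spell out why $f \mapsto f(x)$ is a unital ring homomorphism (power series convergence and Cauchy products), whereas the paper simply cites the holomorphic function calculus for complex Banach algebras; this added detail is harmless and arguably a service to the reader.
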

\begin{proof}
The proof is almost identical to that of Theorem \ref{thm:rank_id_pol} with the polynomial function calculus replaced by the holomorphic function calculus. If equation (\ref{eqn:wedge_vee_entire}) holds, then by Corollary \ref{cor:odot_equal_prod}, we have $\odot_{i=1}^n [f_i] = \odot_{i=1}^n [g_i]$ in $\fU \fR \big( \mathcal{O}(\mathbb{C}) \big)$. By the holomorphic function calculus for complex Banach algebras, there is a ring homomorphism $\sE : \sO (\mathbb{C}) \rightarrow \mathcal{R}$ such that $\sE(f) = f(x)$ for $f \in \mathcal{O}(\C)$. This induces a monoid homomorphism $\fU \fR (\sE) : \fU \fR \big( \sO(\C) \big) \rightarrow \fU \fR(\sR)$ and as $\odot_{i=1}^n [f_i] = \odot_{i=1}^n [g_i]$ in $\fU \fR \big( \sO (\C) \big)$, we have $\odot_{i=1}^n [\sE(f_i)] = \odot_{i=1}^n [\sE(g_i)]$ in $\fU \fR \big( \sO(\C) \big)$. By Proposition \ref{prop:rank_id_fund}, $$\sum_{i=1}^n \rho \big( \sE(f_i) \big) = \sum_{i=1}^n \rho \big( \sE(g_i) \big).$$
\end{proof}

\begin{example}
As $\cos ^2 (\theta) + \sin ^2(\theta) = 1$ for all $\theta \in \mathbb{C}$,  the entire functions $\cos (z), \sin (z)$ are coprime in $\mathcal{O}(\mathbb{C})$. Let $f_1 = \cos (z), f_2 = \sin(z)$ and $g_1 = 1, g_2 = \sin(2z)$ be elements of $\mathcal{O}(\mathbb{C})$. We have $$[f_1] \wedge [f_2] = [g_1] \wedge [g_2] = [1], $$ $$[f_1] \vee [f_2] = [g_1] \vee [g_2] = [\sin(z)\cos(z)].$$ Thus for $T$ in a complex Banach algebra with a rank-system $\rho$, we have the following rank identity: $$\rho \big( \cos(T) \big) + \rho \big( \sin(T) \big) = \rho \big( 1 \big) + \rho \big( \sin(2T) \big).$$
In particular, this identity holds in finite von Neumann algebras for the center-valued rank as defined in \S \ref{subsec:center_valued}.
\end{example}

\subsection{Examples of rank identities in free associative algebras over a field}
\label{subsec:free_assoc}

In the discussion so far, we have identified the importance of a natural language to develop a framework to study rank identities. To this end, in \S \ref{sec:L_bezout} we determined the $\fU \fR$-monoid of elementary divisor domains. For a field $\K$, let $\K \langle t_1, \ldots, t_n \rangle$ denote the free associative algebra over $\K$ in the indeterminates $t_1, \ldots, t_n$. We consider the problem of determining the structure of the $\fU \fR$-monoid of $\K \langle t_1, \ldots, t_n \rangle$ to be the natural next step in keeping with the spirit of the program. Although this is not accomplished in this article, we show some examples of rank identities in the discussion below to serve as motivation for future work.

\begin{prop} 
\label{prop:free_assoc_id}
\textsl{
In $\fU \fR (\K \langle t_1, t_2 \rangle )$, we have 
\begin{itemize}
\item[(i)] $\mathrm{rank}_u(1 + t_1 t_2) \odot \mathrm{rank}_u(1) = \mathrm{rank}_u(1 + t_2 t_1) \odot \mathrm{rank}_u(1)$,
\item[(ii)] $\mathrm{rank}_u(t_1(1 + t_2 t_1)) \odot \mathrm{rank}_u(1) = \mathrm{rank}_u(t_1) \odot \mathrm{rank}_u(1 + t_2 t_1)$.
\item[(iii)] $
\mathrm{rank}_u(t_1) =
\mathrm{rank}_u \Big( \begin{bmatrix}
t_1 t_2 & t_1 \\
0 & 0
\end{bmatrix} =
\mathrm{rank}_u \Big( \begin{bmatrix}
t_1 & t_1 t_2 \\
0 & 0
\end{bmatrix} \Big).
$
\item[(iv)] $
\mathrm{rank}_u(t_1) =
\mathrm{rank}_u \Big( \begin{bmatrix}
t_2 t_1 & 0 \\
t_1 & 0
\end{bmatrix} =
\mathrm{rank}_u \Big( \begin{bmatrix}
t_1 & 0 \\
t_2 t_1 & 0
\end{bmatrix} \Big).
$
\end{itemize}
}
\end{prop}
\begin{proof}
By Lemma \ref{lem:invertible_M2}, for $r \in \K \langle t_1, t_2 \rangle$ the following matrices 
$$\begin{bmatrix}
r & 1\\
1 & 0
\end{bmatrix},
\begin{bmatrix}
1 & r\\
0 & 1
\end{bmatrix} ,
\begin{bmatrix}
1 & 0\\
r & 1
\end{bmatrix},
\begin{bmatrix}
0 & 1\\
1 & r
\end{bmatrix}$$
are invertible in $M_2(\K \langle t_1, t_2 \rangle)$.
It is straightforward to check the matrix computations below.
\begin{equation}
\label{eqn:free_assoc_two1}
\begin{bmatrix}
1 + t_2 t_1 & 0 \\
0 & 1
\end{bmatrix}
= 
\begin{bmatrix}
t_2 & 1\\
1 & 0
\end{bmatrix}
\begin{bmatrix}
1 & t_1\\
0 & 1
\end{bmatrix} 
\begin{bmatrix}
1 + t_1 t_2 & 0 \\
0 & 1
\end{bmatrix}
\begin{bmatrix}
1 & 0\\
-t_2 & 1
\end{bmatrix}
\begin{bmatrix}
-t_1 & 1\\
1 & 0
\end{bmatrix},
\end{equation}

\begin{equation}
\label{eqn:free_assoc_two2}
\begin{bmatrix}
t_1(1 + t_2 t_1) & 0\\
0 & 1
\end{bmatrix} = 
\begin{bmatrix}
t_1 & -1\\
1 & 0
\end{bmatrix} 
\begin{bmatrix}
-t_2 & 1\\
1 & 0
\end{bmatrix}
\begin{bmatrix}
1 + t_2 t_1 & 0\\
0 & t_1
\end{bmatrix}
\begin{bmatrix}
1 & 1\\
1 & 0
\end{bmatrix}
\begin{bmatrix}
-(1 + t_2 t_1) & 1 \\
1 & 0
\end{bmatrix},
\end{equation}

\begin{equation}
\label{eqn:free_assoc_two3}
\begin{bmatrix}
t_1 & 0\\
0 & 0
\end{bmatrix} = 
\begin{bmatrix}
t_1 t_2 & t_1 \\
0 & 0
\end{bmatrix}
\begin{bmatrix}
0 & 1 \\
1 & -t_2
\end{bmatrix} = 
\begin{bmatrix}
t_1 & t_1 t_2 \\
0 & 0
\end{bmatrix}
\begin{bmatrix}
1 & -t_2\\
0 & 1
\end{bmatrix},
\end{equation}

\begin{equation}
\label{eqn:free_assoc_two4}
\begin{bmatrix}
t_1 & 0\\
0 & 0
\end{bmatrix} = 
\begin{bmatrix}
0 & 1\\
1 & -t_2
\end{bmatrix}
\begin{bmatrix}
t_2 t_1 & 0 \\
t_1 & 0
\end{bmatrix} = 
\begin{bmatrix}
1 & 0\\
-t_2 & 0
\end{bmatrix}
\begin{bmatrix}
t_1 & 0 \\
t_2 t_1 & 0
\end{bmatrix}.
\end{equation}
The assertions (i), (ii), (iii), respectively, follow from equation (\ref{eqn:free_assoc_two1}), equation (\ref{eqn:free_assoc_two2}), equation (\ref{eqn:free_assoc_two3}), respectively.
\end{proof}

\begin{thm}
\label{thm:rank_non_pol_id}
\textsl{
Let $\mathcal{R}$ be a $(\sM, \rho)$-ranked ring and $\K$ be a subfield of the center of $\mathcal{R}$. For non-commutative polynomials $p_1, \ldots, p_k$, $q_1, \ldots, q_{\ell}$ in $\K \langle t_1, \ldots, t_n \rangle$, if $\odot_{i=1}^k [p_i] = \odot_{i=1}^{\ell} [q_i]$ in $\fU \fR (\K \langle t_1, \ldots, t_n \rangle)$
then for $x_1, \ldots, x_n$ in $\mathcal{R}$, we have
$$\sum_{i=1}^k \rho \big( p_i(x_1, \ldots, x_n) \big) = \sum_{i=1}^{\ell} \rho \big( q_i(x_1, \ldots, x_n) \big).$$
}
\end{thm}
\begin{proof}
Note that the elements $x_1, \ldots, x_n$ commute with $\K$ as $\K$ is in the center of $\mathcal{R}$. The proof is similar to that of Theorem \ref{thm:rank_id_pol} using the universal property of the free associative algebra over $\K$ in $n$ indeterminates.
\end{proof}

\subsection{Examples of rank identities involving idempotents}
\label{subsec:rank_id_idem}

Let $\sR$ be a ring. In this subsection, we prove several rank identities involving idempotents in $\mathcal{R}$ with a view towards applications in \S \ref{sec:app}. As the structure of the proofs is quite similar, we outline the basic strategy to streamline the process. 
\vskip 0.04in
\textbf{Step 1.}  We consider matrices $A, B$  in $M_n(\mathcal{R})$ and prove that they are invertible by explicitly finding matrices $A', B'$ in $M_n(\mathcal{R})$ such that $AA' = A'A =I_n = B'B = BB'$.
 \vskip 0.04in
\textbf{Step 2.} For matrices $X, Y$ in $M_n(\mathcal{R})$, we show that $AXB = Y$ which implies that $\mathrm{rank}_u(X) = \mathrm{rank}_u(Y)$.
\vskip 0.04in

Steps $1$, $2$ in the proofs involve direct verification via matrix multiplication for the appropriate choices of $A, B, A', B'$. The process of discovery of these matrices in $M_n(\mathcal{R})$ is not explicitly worked out for the sake of brevity and hence their choice may sometimes look mysterious. It may help to recognize that the key constitutive steps involve elementary row and column operations on $X$ to arrive at $Y$. These elementary operations are neatly packaged by combining the successive row operations in $A$ and the successive column operations in $B$.

\begin{lem}
\label{lem:fund_id}
For elements $x, y, e$ in $\mathcal{R}$ with $e$ being idempotent, we have :
\begin{itemize}
\item[(i)] $
\mathrm{rank}_u \Big( \begin{bmatrix}
xe & 0\\
y(1-e) & 0
\end{bmatrix} \Big) = 
\mathrm{rank}_u(xe) \odot \mathrm{rank}_u \big( y(1-e) \big)$,
\item[(ii)]
$
\mathrm{rank}_u \Big( \begin{bmatrix}
ex & (1-e)y\\
0 & 0
\end{bmatrix} \Big) = 
\mathrm{rank}_u(ex) \odot \mathrm{rank}_u \big( (1-e)y \big)$,
\item[(iii)] $
\mathrm{rank}_u \Big( \begin{bmatrix}
e & 0\\
x & 0
\end{bmatrix} \Big) = 
\mathrm{rank}_u(e) \odot \mathrm{rank}_u \big( x(1-e) \big)$,
\item[(iv)] $
\mathrm{rank}_u \Big( \begin{bmatrix}
e & x\\
0 & 0
\end{bmatrix} \Big)  = 
\mathrm{rank}_u(e) \odot \mathrm{rank}_u \big( (1-e)x \big)$,
\end{itemize}
\end{lem}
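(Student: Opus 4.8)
The plan is to follow the two-step strategy laid out just before the lemma: for each identity, exhibit invertible matrices that reduce the displayed $2\times 2$ block matrix $X$ to a genuinely block-diagonal matrix $\mathrm{diag}(\cdot,\cdot)$, then invoke property \ref{def:ranked}(b) (invariance of $\rho$ under multiplication by invertibles) followed by property \ref{def:ranked}(a) (additivity of $\rho$ on diagonal blocks). The entire argument runs on the orthogonality relations satisfied by an idempotent $e$, namely $e^2=e$, $(1-e)^2=1-e$, and $e(1-e)=(1-e)e=0$; these are exactly what force the surviving off-diagonal entries to collapse to $0$ under the right elementary operations. Invertibility of every elementary matrix I use is immediate from Lemma \ref{lem:invertible_M2}(i), so Step~1 is free and the work is concentrated in Step~2.

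I treat \textbf{part (i)} as the base case, using column operations (right multiplication) only. Starting from $X=\begin{bmatrix} xe & 0\\ y(1-e) & 0\end{bmatrix}$, I first right-multiply by $\begin{bmatrix} 1 & 1-e\\ 0 & 1\end{bmatrix}$; since $xe(1-e)=0$ while $y(1-e)(1-e)=y(1-e)$, this produces $\begin{bmatrix} xe & 0\\ y(1-e) & y(1-e)\end{bmatrix}$. Right-multiplying by $\begin{bmatrix} 1 & 0\\ -1 & 1\end{bmatrix}$ then cancels the $(2,1)$ entry, leaving $\mathrm{diag}(xe,\,y(1-e))$. Writing $B$ for the invertible product of these two factors, property \ref{def:ranked}(b) gives $\rho(X)=\rho(XB)$ and property \ref{def:ranked}(a) gives $\rho(XB)=\rho(xe)+\rho(y(1-e))$. \textbf{Part (ii)} is the exact mirror image: the nonzero row is cleared by row operations, left-multiplying by $\begin{bmatrix} 1 & 0\\ 1-e & 1\end{bmatrix}$ (using $(1-e)e=0$) and then by $\begin{bmatrix} 1 & -1\\ 0 & 1\end{bmatrix}$, to reach $\mathrm{diag}(ex,\,(1-e)y)$.

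\textbf{Parts (iii) and (iv)} I reduce to (i) and (ii) respectively via a single preliminary elementary operation that uses $e$ as a pivot. For (iii), left-multiplying $\begin{bmatrix} e & 0\\ x & 0\end{bmatrix}$ by $\begin{bmatrix} 1 & 0\\ -x & 1\end{bmatrix}$ replaces the $(2,1)$ entry $x$ by $x-xe=x(1-e)$, giving $\begin{bmatrix} e & 0\\ x(1-e) & 0\end{bmatrix}$; since $e=1\cdot e$ is of the form $xe$ (it satisfies $e\cdot e=e$) and $x(1-e)$ is of the form $y(1-e)$, part (i) immediately yields $\rho(e)+\rho(x(1-e))$. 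Symmetrically, for (iv), right-multiplying $\begin{bmatrix} e & x\\ 0 & 0\end{bmatrix}$ by $\begin{bmatrix} 1 & -x\\ 0 & 1\end{bmatrix}$ turns the $(1,2)$ entry into $x-ex=(1-e)x$, after which part (ii) finishes the computation.

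I expect no serious obstacle: the substance lies entirely in choosing elementary operations compatible with the idempotent relations, and the apparent asymmetry among the four statements dissolves once one sees that (iii) and (iv) are simply (i) and (ii) preceded by one pivot step. The only points demanding care are verifying each matrix product by direct multiplication and confirming each elementary factor is invertible through Lemma \ref{lem:invertible_M2}; both are routine bookkeeping rather than conceptual difficulties.
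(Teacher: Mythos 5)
Your proof is correct and follows essentially the same two-step strategy as the paper: reduce each block matrix to $\mathrm{diag}(\cdot,\cdot)$ by invertible row/column operations exploiting $e(1-e)=(1-e)e=0$, then apply properties \ref{def:ranked}(b) and \ref{def:ranked}(a), with (iii) and (iv) obtained from (i) and (ii) by the same $x$-pivot the paper uses. The only difference is cosmetic: where the paper multiplies by a single involution $\begin{bmatrix} e & 1-e\\ 1+e & -e \end{bmatrix}$ (resp.\ its transpose-like counterpart), you factor the same reduction into two elementary unitriangular matrices, which changes nothing of substance.
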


\begin{proof}$\phantom{}$
\begin{center}
\underline{{\it Step 1.}}
\end{center}
\begin{align*}
\textrm{(for (i)-(iv) )}
\begin{bmatrix}
1 & 0\\
0 & 1
\end{bmatrix} =
\begin{bmatrix}
e & 1-e\\
1+e & -e
\end{bmatrix}^2
= 
\begin{bmatrix}
e & 1+e\\
1-e & -e
\end{bmatrix}^2 
\end{align*}
\begin{align*}
\textrm{(for (iii),(iv) )} 
\begin{bmatrix}
1 & 0\\
0 & 1
\end{bmatrix} = 
\begin{bmatrix}
1 & 0\\
x & 1
\end{bmatrix}
\begin{bmatrix}
1 & 0 \\
-x & 1 
\end{bmatrix} = 
\begin{bmatrix}
1 & 0\\
-x & 1
\end{bmatrix}
\begin{bmatrix}
1 & 0 \\
x & 1 
\end{bmatrix}
\end{align*}

\begin{center}
\underline{{\it Step 2.}}
\end{center}
\begin{align*}
\textrm{(for (i))} \begin{bmatrix}
xe & 0\\
y(1-e) & 0
\end{bmatrix}
\begin{bmatrix}
e & 1-e\\
1+e & -e
\end{bmatrix} = 
\begin{bmatrix}
xe & 0\\
0 & y(1-e)
\end{bmatrix}
\end{align*}

\begin{align*}
\textrm{(for (ii)) }
\begin{bmatrix}
e & 1+e\\
1-e & -e
\end{bmatrix}
\begin{bmatrix}
ex & (1-e)y\\
0 & 0
\end{bmatrix} = 
\begin{bmatrix}
ex & 0\\
0 & (1-e)y
\end{bmatrix}
\end{align*}
\begin{align*}
\textrm{(for (iii)) } \begin{bmatrix}
1 & 0\\
-x & 1
\end{bmatrix}
\begin{bmatrix}
e & 0\\
x & 0
\end{bmatrix}
\begin{bmatrix}
e & 1-e\\
1+e & -e
\end{bmatrix} = 
\begin{bmatrix}
e & 0\\
0 & x(1-e)
\end{bmatrix}
\end{align*}
\begin{align*}
\textrm{(for (iv)) } \begin{bmatrix}
e & 1+e\\
1-e & -e
\end{bmatrix}
\begin{bmatrix}
e & x\\
0 & 0
\end{bmatrix}
\begin{bmatrix}
1 & -x\\
0 & 1
\end{bmatrix} = 
\begin{bmatrix}
e & 0\\
0 & (1-e)x
\end{bmatrix}
\end{align*}

Thus (i), (ii), (iii), (iv) follow from property \ref{def:ranked}(a) for $\mathrm{rank}_u$.
\end{proof}

\begin{cor}
\label{cor:fund_id_cor_1}
Let $\sR$ be a ring. For idempotents $e, f$ in $\mathcal{R}$, we have,
\begin{itemize}
\item[(i)] $\mathrm{rank}_u(e) \odot \mathrm{rank}_u \big( f(1-e) \big) = \mathrm{rank}_u(f)\odot \mathrm{rank}_u \big( e(1-f) \big)$;
\item[(ii)] $\mathrm{rank}_u(e) \odot \mathrm{rank}_u \big( (1-e)f \big) = \mathrm{rank}_u(f) \odot \mathrm{rank}_u \big( (1-f)e \big)$.
\end{itemize}

\end{cor}

\begin{proof}$\phantom{}$
\begin{center}
\underline{{\it Step 1.}}
\end{center}
\begin{align*}
\textrm{(for (i), (ii) )} \begin{bmatrix}
1 & 0\\
0 & 1
\end{bmatrix}=
\begin{bmatrix}
0 & 1\\
1 & 0
\end{bmatrix}^2 
\end{align*}

\begin{center}
\underline{{\it Step 2.}}
\end{center}
\begin{align*}
\textrm{(for (i)) }\begin{bmatrix}
e & f\\
0 & 0
\end{bmatrix} \begin{bmatrix}
0 & 1\\
1 & 0
\end{bmatrix}=
\begin{bmatrix}
f & e\\
0 & 0
\end{bmatrix}
\end{align*}
\begin{align*}
\textrm{(for (ii)) }\begin{bmatrix}
0 & 1\\
1 & 0
\end{bmatrix}\begin{bmatrix}
e & 0\\
f & 0
\end{bmatrix} =
\begin{bmatrix}
f & 0\\
e & 0
\end{bmatrix}
\end{align*}

Thus $\mathrm{rank}_u \Big( \begin{bmatrix}
e & f\\
0 & 0
\end{bmatrix} \Big) = 
\mathrm{rank}_u \Big( \begin{bmatrix}
f & e\\
0 & 0
\end{bmatrix} \Big)$,  
$\mathrm{rank}_u \Big( \begin{bmatrix}
e & 0\\
f & 0
\end{bmatrix} \Big) = 
\mathrm{rank}_u \Big( \begin{bmatrix}
f & 0\\
e & 0
\end{bmatrix} \Big)$. As a result, (i), (ii) follow from Lemma \ref{lem:fund_id}, (iii), (iv), respectively.
\end{proof}

\begin{lem}
\label{lem:diff_rank_id}
\textsl{
Let $\sR$ be a ring. For idempotents $e, f$ in $\mathcal{R}$, we have,
\begin{align*}
&\mathrm{rank}_u(e) \odot \mathrm{rank}_u(f) \odot \mathrm{rank}_u(e-f) \\
= &\mathrm{rank}_u(e) \odot \mathrm{rank}_u(f) \odot \mathrm{rank}_u \big( e(1-f) \big) \odot \mathrm{rank}_u \big( (1-e)f \big) \\
= & \mathrm{rank}_u(e) \odot \mathrm{rank}_u(f) \odot \mathrm{rank}_u \big( (1-f)e \big) \odot \mathrm{rank}_u \big( f(1-e) \big)
\end{align*}
}
\end{lem}

\begin{proof} (i) For $X, Y $ in $M_4(\mathcal{R})$ as below, $$
X := \begin{bmatrix}
e & f & 0 & 0\\
0 & 0 & 0 & 0\\
0 & 0 & e & 0\\
0 & 0 & f & 0
\end{bmatrix},
Y := \begin{bmatrix}
-e & 0 & 0 & 0\\
0 & f & 0 & 0\\
0 & 0 & e-f & 0\\
0 & 0 & 0 & 0
\end{bmatrix},
$$
we first prove that $\mathrm{rank}_u(X) = \mathrm{rank}_u(Y)$.

\begin{center}
\underline{{\it Step 1.}}
\end{center}
\begin{align*}
\begin{bmatrix}
1 & 0 & 0 & 0\\
0 & 1 & 0 & 0\\
0 & 0 & 1 & 0\\
0 & 0 & 0 & 1
\end{bmatrix} &=
\begin{bmatrix}
0 & 0 & 1 & 0\\
f & 0 & 0 & 1\\
1-f & 0 & 1 & -1\\
0 & 1 & 0 & 0
\end{bmatrix}
\begin{bmatrix}
-1 & 1 & 1 & 0\\
0 & 0 & 0 & 1\\
1 & 0 & 0 & 0\\
f & 1-f & -f & 0
\end{bmatrix}\\
&= 
\begin{bmatrix}
-1 & 1 & 1 & 0\\
0 & 0 & 0 & 1\\
1 & 0 & 0 & 0\\
f & 1-f & -f & 0
\end{bmatrix}
\begin{bmatrix}
0 & 0 & 1 & 0\\
f & 0 & 0 & 1\\
1-f & 0 & 1 & -1\\
0 & 1 & 0 & 0
\end{bmatrix}.\\
\begin{bmatrix}
1 & 0 & 0 & 0\\
0 & 1 & 0 & 0\\
0 & 0 & 1 & 0\\
0 & 0 & 0 & 1
\end{bmatrix} &=
\begin{bmatrix}
1 & 0 & 1& 0\\
0 & 1 & -1 & 0\\
-e & 0 & 1-e & 0\\
0 & 0 & 0 & 1
\end{bmatrix}
\begin{bmatrix}
1-e & 0 & -1& 0\\
e & 1 & 1 & 0\\
e & 0 & 1 & 0\\
0 & 0 & 0 & 1
\end{bmatrix}\\
&=
\begin{bmatrix}
1-e & 0 & -1& 0\\
e & 1 & 1 & 0\\
e & 0 & 1 & 0\\
0 & 0 & 0 & 1
\end{bmatrix}
\begin{bmatrix}
1 & 0 & 1& 0\\
0 & 1 & -1 & 0\\
-e & 0 & 1-e & 0\\
0 & 0 & 0 & 1
\end{bmatrix}.
\end{align*}

\begin{center}
\underline{{\it Step 2.}}
\end{center}
$$\begin{bmatrix}
0 & 0 & 1 & 0\\
f & 0 & 0 & 1\\
1-f & 0 & 1 & -1\\
0 & 1 & 0 & 0
\end{bmatrix}
\begin{bmatrix}
e & f & 0 & 0\\
0 & 0 & 0 & 0\\
0 & 0 & e & 0\\
0 & 0 & f & 0
\end{bmatrix}
\begin{bmatrix}
1 & 0 & 1& 0\\
0 & 1 & -1 & 0\\
-e & 0 & 1-e & 0\\
0 & 0 & 0 & 1
\end{bmatrix} = \begin{bmatrix}
-e & 0 & 0 & 0\\
0 & f & 0 & 0\\
0 & 0 & e-f & 0\\
0 & 0 & 0 & 0
\end{bmatrix}.$$\\

Using property \ref{def:ranked}(a) for $\mathrm{rank}_u$ and Corollary \ref{cor:fund_id_cor_1}, we see that 
\begin{align*}
&\phantom{=}\mathrm{rank}_u(e) \odot \mathrm{rank}_u(f) \odot \mathrm{rank}_u(e-f)\\
&=
\mathrm{rank}_u \Big(
\begin{bmatrix}
 e & f\\
 0 & 0
\end{bmatrix}  \Big) \odot
\mathrm{rank}_u \Big( 
\begin{bmatrix}
e & 0\\
f & 0
\end{bmatrix} \Big)\\
&= \mathrm{rank}_u(e) \odot \mathrm{rank}_u \big( (1-e)f \big) \odot \mathrm{rank}_u(f) \odot \mathrm{rank}_u \big( e(1-f) \big).
\end{align*}

The second equality in the statement of the lemma follows by exchanging $e$ and $f$.
\end{proof}

\begin{prop}
\label{prop:aux_rank_id_1}
\textsl{
Let $\sR$ be a ring. For idempotents $e, f$ in $\mathcal{R}$, we have,
\begin{equation}
\mathrm{rank}_u \Big(
\begin{bmatrix}
e & f\\
f & 0
\end{bmatrix} \Big) = 
\mathrm{rank}_u(f) \odot \mathrm{rank}_u(f) \odot \mathrm{rank}_u \big( (1-f)e(1-f) \big).
\end{equation}
}
\end{prop}
\begin{proof}
Let $f_2 = \begin{bmatrix}
f & 0\\
0 & 0
\end{bmatrix}, e_2 = \begin{bmatrix}
e & 0\\
f & 0
\end{bmatrix}.$
Note that $f_2$ is an idempotent in $M_2(\mathcal{R})$. It is easy to see that $\mathrm{rank}_u(f_2) = \mathrm{rank}_u(f)$ and,
$$
\mathrm{rank}_u \Big( \begin{bmatrix}
e & f\\
f & 0
\end{bmatrix}\Big) = 
\mathrm{rank}_u \Big( \begin{bmatrix}
e & f\\
f & 0
\end{bmatrix} \odot {\bf 0}_2\Big) = \mathrm{rank}_u \Big( \begin{bmatrix}
f_2 & e_2\\
0_2 & 0_2
\end{bmatrix} \Big)$$

By Lemma \ref{lem:fund_id}, we have 

\begin{align*}
\mathrm{rank}_u \Big( \begin{bmatrix}
f_2 & e_2\\
0_2 & 0_2
\end{bmatrix} \Big) &= \mathrm{rank}_u(f_2) \odot \mathrm{rank}_u \big( (1-f_2)e_2 \big)\\
&= \mathrm{rank}_u(f) \odot \mathrm{rank}_u\Big(\begin{bmatrix}
(1-f)e & 0\\
f & 0
\end{bmatrix} \Big)\\
& = \mathrm{rank}_u(f) \odot \mathrm{rank}_u(f) \odot \mathrm{rank}_u \big( (1-f)e(1-f) \big).
\end{align*}
\end{proof}

\begin{thm}
\label{thm:rank_sub}
\textsl{
Let $\sR$ be a ring in which the central element $2$ is invertible. Then for idempotents $e, f$ in $\mathcal{R}$, we have
\begin{align*}
&\mathrm{rank}_u(e) \odot \mathrm{rank}_u(f) \odot \mathrm{rank}_u(e+f)\\
=&\mathrm{rank}_u(e) \odot \mathrm{rank}_u(f) \odot \mathrm{rank}_u(f) \odot \mathrm{rank}_u \big( (1-f)e(1-f) \big)\\
=&\mathrm{rank}_u(e) \odot \mathrm{rank}_u(e) \odot \mathrm{rank}_u(f) \odot \mathrm{rank}_u \big( (1-e)f(1-e) \big)
\end{align*}
}
\end{thm}

\begin{proof}
For $X, Y$ in $M_3(\sR)$ as below, $$
X := \begin{bmatrix}
e & 0 & 0\\
0 & f & 0\\
0 & 0 & -e-f
\end{bmatrix},
Y := \begin{bmatrix}
e & 0 & 0\\
0 & e & f\\
0 & f & 0
\end{bmatrix},$$
we prove that $\mathrm{rank}_u(X) = \mathrm{rank}_u(Y)$.

\begin{center}
\underline{{\it Step 1.}}
\end{center}

\begin{align*}
\begin{bmatrix}
1 & 0 & 0\\
0 & 1 & 0\\
0 & 0 & 1\\
\end{bmatrix}&=
2^{-1}
\begin{bmatrix}
1 & 0 & 0\\
1 & 2 & 2\\
f & -(1-f) & f
\end{bmatrix} \cdot 
\begin{bmatrix}
2 & 0 & 0\\
f & f & -2\\
-(1+f) & 1-f & 2
\end{bmatrix}\\
&=
\begin{bmatrix}
2 & 0 & 0\\
f & f & -2\\
-(1+f) & 1-f & 2
\end{bmatrix}
\cdot 2^{-1}
\begin{bmatrix}
1 & 0 & 0\\
1 & 2 & 2\\
f & -(1-f) & f
\end{bmatrix}.\\
\begin{bmatrix}
1 & 0 & 0\\
0 & 1 & 0\\
0 & 0 & 1\\
\end{bmatrix}&=
\begin{bmatrix}
1+e & e-1 & 0\\
e & e-2 & 1\\
e & e-2 & 0
\end{bmatrix} \cdot 
2^{-1}
\begin{bmatrix}
2-e & 0 & e-1\\
e & 0 & -(1+e)\\
0 & 2 & -2
\end{bmatrix}\\
&=
2^{-1}
\begin{bmatrix}
2-e & 0 & e-1\\
e & 0 & -(1+e)\\
0 & 2 & -2
\end{bmatrix} \cdot
\begin{bmatrix}
1+e & e-1 & 0\\
e & e-2 & 1\\
e & e-2 & 0
\end{bmatrix}.
\end{align*}

\begin{center}
\underline{{\it Step 2.}}
\end{center}$$
2^{-1}
\begin{bmatrix}
1 & 0 & 0\\
1 & 2 & 2\\
f & -(1-f) & f
\end{bmatrix}
\begin{bmatrix}
e & 0 & 0\\
0 & f & 0\\
0 & 0 & -e-f
\end{bmatrix}
\begin{bmatrix}
1+e & e-1 & 0\\
e & e-2 & 1\\
e & e-2 & 0
\end{bmatrix}=
\begin{bmatrix}
e & 0 & 0\\
0 & e & f\\
0 & f & 0
\end{bmatrix}.
$$

Using Proposition \ref{prop:aux_rank_id_1}, we have the first equality in the assertion. The second equality follows by exchanging the roles of $e$ and $f$.
\end{proof}

\begin{prop}
\label{prop:commutator_rank}
\textsl{
Let $\sR$ be a ring. For idempotents $e, f$ in $\mathcal{R}$, we have $$\mathrm{rank}_u(ef-fe) \odot \mathrm{rank}_u(1) = \mathrm{rank}_u(e-f) \odot \mathrm{rank}_u(1-e-f).$$ 
}
\end{prop}
\begin{proof}
Note that $(e-f)(e+f-1) = ef-fe.$ 
\vskip 0.1in

\noindent \underline{{\it Step 1.}} Use Lemma \ref{lem:invertible_M2} with the appropriate choices of $x, y$ to identify the invertible matrices for Step 2 below.
\vskip 0.1in

\noindent \underline{{\it Step 2.}} 
\begin{align*}\begin{bmatrix}
0 & 1 \\
1 & 1-2e
\end{bmatrix}
\begin{bmatrix}
1 &  e+f-1 \\
e-f & 0
\end{bmatrix}
\begin{bmatrix}
1 & 0\\
1-2f & 1
\end{bmatrix} &= 
\begin{bmatrix}
e-f & 0\\
0 & 1-e-f
\end{bmatrix}\\
\begin{bmatrix}
e-f & -1 \\
1 & 0
\end{bmatrix}
\begin{bmatrix}
1 &  e+f-1 \\
e-f & 0
\end{bmatrix}
\begin{bmatrix}
1-e-f & 1\\
1 & 0
\end{bmatrix} &= 
\begin{bmatrix}
(e-f)(e+f-1) & 0\\
0 & 1
\end{bmatrix}
\end{align*}

We conclude that
\begin{align*}
\mathrm{rank}_u \Big( 
\begin{bmatrix}
1 &  e+f-1 \\
e-f & 0
\end{bmatrix} \Big) &= \mathrm{rank}_u(ef-fe) \odot \mathrm{rank}_u(1)\\
&= \mathrm{rank}_u(e-f) \odot \mathrm{rank}_u(1-e-f)
\end{align*}
\end{proof}

\subsection{Examples of rank identities in ranked rings}

\begin{lem}
\label{lem:cochran_rank}
\textsl{
Let $\sR$ be a $(\sM, \rho)$-ranked ring. Then for $x \in \mathcal{R}$, we have $$\rho(x) + \rho(1-x) = \rho(1) + \rho(x - x^2).$$ In addition, if $\sM$ is cancellative and $\rho$ is nondegenerate, then $\rho(x) + \rho(1-x) = \rho(1)$ if and only if $x$ is an idempotent.
}
\end{lem}
\begin{proof}
For an indeterminate $t$, note that $t \Z[t] + (1-t) \Z[t] = \Z[t]$. By Lemma \ref{lem:x_1-x}, in $\fU \fR (\Z[t])$ we have $$\mathrm{rank}_u(t) \odot \mathrm{rank}_u(1-t) = \mathrm{rank}_u(t-t^2) \odot \mathrm{rank}_u(1).$$

Since there is an evaluation homomorphism, $\sE : \Z[t] \to \sR$ which sends $t \mapsto x$, by Proposition \ref{prop:rank_id_fund}, we have $$\rho(x) + \rho(1-x) = \rho(1) + \rho(x - x^2).$$ 

Using the cancellation property of $\sM$, we have $\rho(x) + \rho(1-x) = \rho(1)$ if and only if $\rho(x-x^2) = 0$. Since $\rho$ is nondegenerate, we conclude that $x - x^2 = 0$.
\end{proof}

\begin{prop}
\label{prop:diff_rank_id_con}
\textsl{
Let $\sR$ be a $(\sM, \rho)$-ranked ring. If $\sM$ is cancellative, then for idempotents $e, f$ in $\mathcal{R}$, we have 
\begin{itemize}
\item[(i)] $\rho(e-f) = \rho \big( e(1-f) \big) + \rho \big( (1-e)f \big) = \rho \big( e(1-f) \big) + \rho \big( f(1-e) \big)$
\item[(ii)] $\rho(e-f) + \rho(f) = \rho(e) + \rho \big( f(1-e) \big) + \rho \big( (1-e)f \big)$
\end{itemize}
}
\end{prop}
\begin{proof}
Since $\sM$ is cancellative, the assertions follow from Proposition \ref{prop:rank_id_fund} and Lemma \ref{lem:diff_rank_id}.
\end{proof}

\begin{cor}
\label{cor:diff_rank_id_1}
\textsl{
Let $G^+$ be the positive cone of a partially ordered abelian group $G$, and $\sR$ be a $(G^+, \rho)$-ranked ring. Let $e, f$ be idempotent elements in $\sR$. Then we have $\rho(e-f) + \rho(f) = \rho(e)$ if and only if $\rho \big( f(1-e) \big) = \rho \big( (1-e)f \big)=0$. In addition, if $\rho$ is nondegenerate, then $\rho(e-f) + \rho(f) = \rho(e)$ if and only if $f = ef = fe = efe$.
}
\end{cor}
\begin{proof}
From Proposition \ref{prop:diff_rank_id_con}, (ii), and the cancellation property of $G$, we have $\rho(e-f) + \rho(f) = \rho(e)$ if and only $\rho \big( f(1-e) \big) + \rho \big( (1-e)f \big) = 0.$ Since $G$ is positive, we conclude that $\rho \big( f(1-e) \big) + \rho \big( (1-e)f \big) = 0$ if and only if $\rho \big( f(1-e) \big) = \rho \big( (1-e)f \big)$.
\end{proof}

\begin{cor}
\label{cor:app_ortho_rank-ring}
\textsl{
Let $G^+$ be the positive cone of a partially ordered abelian group $G$. Let $\sR$ be a $(G^+, \rho)$-ranked ring. For idempotents $e, f$ in $\mathcal{R}$, if $ef = fe = 0$, then $\rho(e+f) = \rho(e) + \rho(f)$.
}
\end{cor}
\begin{proof}
If $ef=fe =0$, then $e+f$ is an idempotent and $(e+f)f = f = f(e+f)$. Thus by Corollary \ref{cor:diff_rank_id_1}, $\rho(e) + \rho(f) = \rho \big( (e+f)-f \big) +\rho(f) = \rho(e+f)$.
\end{proof}

\begin{cor}
\label{cor:diff_rank_id_2}
Let $\sR$ be a $(\sM, \rho)$-ranked ring with $\sM$ being cancellative. For idempotents $e, f$ in $\mathcal{R}$, we have $\rho(1-e-f) + \rho(e) + \rho(f) = \rho(ef) + \rho(fe)  + \rho(1).$
\end{cor}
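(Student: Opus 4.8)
The plan is to follow the Step 1 / Step 2 strategy of this subsection, exhibiting invertible matrices $A, B \in M_3(\mathcal{R})$ together with the two diagonal matrices $X := \mathrm{diag}(1-e-f,\, e,\, f)$ and $Y := \mathrm{diag}(1,\, ef,\, fe)$, and showing that $AXB = Y$. Property \ref{def:ranked}(b) then yields $\rho_3(X) = \rho_3(Y)$, while property \ref{def:ranked}(a) evaluates the two sides as $\rho(1-e-f) + \rho(e) + \rho(f)$ and $\rho(1) + \rho(ef) + \rho(fe)$ respectively; since $G$ is commutative, this is exactly the claimed identity. I would note that, unlike the preceding difference identities, this route needs neither the cancellation property of $G$ nor the invertibility of $2$, which is consistent with the bare hypotheses of the statement.

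The matrices $A$ and $B$ are most transparently discovered as products of elementary row and column operations, which I would then multiply out and verify directly. Starting from $X$, I would first add rows two and three to row one and then add columns two and three to column one; since $(1-e-f) + e + f = 1$, this places a $1$ in the $(1,1)$ slot and converts $X$ into $\begin{bmatrix} 1 & e & f \\ e & e & 0 \\ f & 0 & f \end{bmatrix}$. Using $e^2 = e$ and $f^2 = f$, I would then clear the first row by the column operations $\mathrm{col}_2 \mapsto \mathrm{col}_2 - \mathrm{col}_1 \cdot e$ and $\mathrm{col}_3 \mapsto \mathrm{col}_3 - \mathrm{col}_1 \cdot f$, and clear the first column by the row operations $\mathrm{row}_2 \mapsto \mathrm{row}_2 - e \cdot \mathrm{row}_1$ and $\mathrm{row}_3 \mapsto \mathrm{row}_3 - f \cdot \mathrm{row}_1$. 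The cross terms survive precisely in the off-diagonal positions of the lower $2 \times 2$ block, leaving $\begin{bmatrix} 1 & 0 & 0 \\ 0 & 0 & -ef \\ 0 & -fe & 0 \end{bmatrix}$; a swap of the last two columns followed by negating them produces $Y = \mathrm{diag}(1, ef, fe)$. Gathering all row operations into one matrix $A$ and all column operations into one matrix $B$ (each a product of unitriangular, permutation, and unit-diagonal matrices, hence invertible over any unital ring) gives the desired factorization $AXB = Y$.

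The only point demanding genuine care, and the main obstacle, is the non-commutativity of $\mathcal{R}$: left multiplications (row operations) and right multiplications (column operations) act on opposite sides, so the order of the factors and the side on which each $e$ or $f$ is multiplied must be tracked scrupulously. In particular, the clearing column operation subtracts the right multiple $\mathrm{col}_1 \cdot e$ whereas the clearing row operation subtracts the left multiple $e \cdot \mathrm{row}_1$, and it is exactly this asymmetry that deposits $ef$ into one off-diagonal slot and $fe$ into the other. Once the explicit $A, B, X, Y$ are recorded, Step 1 (verifying invertibility of $A$ and $B$, e.g. by displaying their unitriangular or permutation inverses) and Step 2 (confirming $AXB = Y$ by direct multiplication, using only $e^2=e$ and $f^2=f$) are routine, and the identity then follows from properties \ref{def:ranked}(a) and \ref{def:ranked}(b).
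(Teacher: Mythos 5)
Your proof is correct, but it follows a genuinely different route from the paper's. The paper proves this corollary in two lines from earlier results: applying Lemma \ref{lem:diff_rank_id}(ii) to the idempotents $1-e$ and $f$ gives $\rho(1-e-f) + \rho(f) = \rho(1-e) + \rho(fe) + \rho(ef)$, and then adding $\rho(e)$ to both sides and using $\rho(e)+\rho(1-e)=\rho(1)$ (via Corollary \ref{cor:x_1_x}) finishes the argument. You instead exhibit the single matrix equivalence $\mathrm{diag}(1-e-f,\,e,\,f)\sim_3 \mathrm{diag}(1,\,ef,\,fe)$ in $M_3(\mathcal{R})$ via explicit elementary operations; I checked the computation, including the noncommutative bookkeeping (right multiples for column operations, left multiples for row operations) that deposits $-ef$ and $-fe$ in the two off-diagonal slots of the lower block, and all of your elementary factors (unitriangular, signed permutation) are invertible over any unital ring. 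What your route buys is worth noting: the paper's citation chain silently imports extra hypotheses --- Lemma \ref{lem:diff_rank_id} assumes $G$ is cancellative, and Corollary \ref{cor:x_1_x} additionally assumes the center of $\mathcal{R}$ contains a nontrivial subfield --- neither of which appears in the statement of the corollary. Your argument uses only properties \ref{def:ranked}(a) and \ref{def:ranked}(b) of the rank-system, so it proves the statement under its bare hypotheses; in effect it establishes the stronger monoid identity $[1-e-f]\odot[e]\odot[f]=[1]\odot[ef]\odot[fe]$ in $\fF^3(\mathcal{R})$, in the same spirit as Proposition \ref{prop:commutator_rank}. The one presentational point to add: since you interleave row and column operations, you should remark that associativity of matrix multiplication lets you collect all left factors into $A$ and all right factors into $B$ regardless of the interleaving, so the end result is genuinely of the form $AXB=Y$.
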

\begin{proof}
Using Proposition \ref{prop:diff_rank_id_con}, (ii), for the idempotents $1-e, f$, we observe that $\rho(1-e-f) + \rho(f) = \rho(1-e) + \rho(fe) + \rho(ef)$. From Lemma \ref{lem:cochran_rank}, we have $\rho(e) + \rho(1-e) = \rho(1)$ and hence $\rho(1-e-f) + \rho(e) + \rho(f) = \rho(ef) + \rho(fe)  + \rho(1)$.
\end{proof}

\begin{prop}
\label{prop:diff_rank_id_3}
\textsl{
Let $G^+$ be the positive cone of a partially ordered abelian group $G$. Let $\sR$ be a $(G^+, \rho)$-ranked ring with $G$ where $\rho$ is nondegenerate. Let $e_1, e_2, e_3$ be idempotents in $\mathcal{R}$ such that $e_1 + e_2 + e_3 = 1$. Then $\rho(e_1) + \rho(e_2)+ \rho(e_3) = \rho(1)$ if and only if $e_1, e_2, e_3$ are mutually orthogonal.
}
\end{prop}
\begin{proof}
Using Corollary \ref{cor:diff_rank_id_2} and the cancellation property of $G$, we conclude that $\rho(e_1) + \rho(e_2) + \rho(e_3) = \rho(1-e_2-e_3) + \rho(e_2)+ \rho(e_3) = \rho(1)$ if and only if $\rho(e_2e_3) + \rho(e_3e_2) = 0$. Since $G$ is positive and $\rho$ is nondegenerate, $\rho(e_2 e_3) + \rho(e_3 e_2) = 0$ if and only if $e_2 e_3 = e_3 e_2 = 0$. Thus $e_1, e_2, e_3$ are mutually orthogonal.
\end{proof}

\begin{prop}
\label{prop:subadd_lem}
\textsl{
Let $\sR$ be a $(\sM, \rho)$-ranked ring such that $\sM$ is cancellative and the central element $2$ is invertible in $\sR$. For idempotents $e, f \in \sR$, we have the following rank identity: $$\rho(e+f) = \rho(f) + \rho \big( (1-f)e(1-f) \big).$$
}
\end{prop}

\begin{prop}
\label{prop:comm_anticomm}
Let $\sR$ be a $\rho$-ranked ring. For idempotents $e, f$ in $\mathcal{R}$, we have the following rank identities :
\begin{itemize}
\item[(i)] $\rho(ef-fe) + \rho(1) = \rho(e-f) + \rho(1-e-f)$,
\item[(ii)] $\rho(ef+fe) + \rho(1) = \rho(e+f) + \rho(1-e-f)$.
\end{itemize}
\end{prop}
\begin{proof}
\noindent (i) Follows from Proposition \ref{prop:commutator_rank} and Proposition \ref{prop:rank_id_fund}.
\vskip 0.05in

\noindent (ii) As $ef + fe = (e+f)^2 - (e+f)$, the result follows from Lemma \ref{lem:cochran_rank}.
\end{proof}

\begin{lem}
\label{lem:rank_id_ex1}
\textsl{
Let $\mathcal{R}$ be a $(\sM, \rho)$-ranked ring where $\sM$ is cancellative. For $x, y$ in $\mathcal{R}$, we have the following rank identities
\begin{itemize}
\item[(i)] $\rho(1+xy) = \rho(1+yx)$,
\item[(ii)] $\rho(x) + \rho(1+yx) = \rho(x + xyx) + \rho(1).$
\end{itemize}
}
\end{lem}
\begin{proof}
Follows from Proposition \ref{prop:free_assoc_id} and Theorem \ref{thm:rank_non_pol_id}.
\end{proof}

\section{Applications}
\label{sec:app}

In this section, we illustrate various applications of abstract rank identities to rank-functions on von Neumann regular rings, and in particular, Murray-von Neumann algebras.

\subsection{Von Neumann regular ranked-rings}

\begin{prop}
\label{prop:idem_app_ranked-ring}
\textsl{
Let $\sR$ be a ring and $x, y \in \sR$.
\begin{itemize}
    \item[(i)] If $x \sR = y \sR$, or $\sR x = \sR y$, then $\mathrm{rank}_u(x) = \mathrm{rank}_u(y).$
    \item[(ii)] Let $e$ be an idempotent in $\sR$ such that $e \sR \subseteq x\sR$. Then $$\mathrm{rank}_u(e) \odot \mathrm{rank}_u \big( (1-e)x \big) = \mathrm{rank}_u (x).$$
    \item[(iii)] Let $e$ be an idempotent in $\sR$ such that $\sR e \subseteq \sR x$. Then $$\mathrm{rank}_u(e) \odot \mathrm{rank}_u \big( x(1-e) \big) = \mathrm{rank}_u (x).$$
\end{itemize}
}
\end{prop}
\begin{proof}
\noindent (i) If $x \sR = y \sR$,  there are elements $a, b \in \sR$ such that $x = ya$ and $y = xb$. From Proposition \ref{prop:free_assoc_id}, (iii), we have
$$\mathrm{rank}_u(x) = 
\mathrm{rank}_u \Big(
\begin{bmatrix}
x & xb\\
0 & 0\\
\end{bmatrix}
\Big) = 
\mathrm{rank}_u \Big(
\begin{bmatrix}
y & ya\\
0 & 0\\
\end{bmatrix}
\Big) = 
\mathrm{rank}_u(y).
$$

If $\sR x = \sR y$, the assertion follows similarly using Proposition \ref{prop:free_assoc_id}, (iv).
\vskip 0.1in

\noindent (ii) Let $e \in x \sR$ so that $e = xy$ for some $y \in \sR$. From Lemma \ref{lem:fund_id}, (iv), and Proposition \ref{prop:free_assoc_id}, (iii), we observe that
\begin{equation*}
\label{eqn:rank_ineq_1}
\mathrm{rank}_u(e) \odot \mathrm{rank}_u \big( (1-e)x \big) = \mathrm{rank}_u \Big(
\begin{bmatrix}
e & x\\
0 & 0
\end{bmatrix} \Big) =
\mathrm{rank}_u \Big(
\begin{bmatrix}
xy & x\\
0 & 0
\end{bmatrix} \Big) = \mathrm{rank}_u(x).
\end{equation*}
\vskip 0.1in

\noindent (iii) Let $e \in \sR x$ so that $e = yx$ for some $y \in \sR$. From Lemma \ref{lem:fund_id}, (iii), and Proposition \ref{prop:free_assoc_id}, (iv), we observe that
\begin{equation*}
\label{eqn:rank_ineq_2}
\mathrm{rank}_u(e) \odot \mathrm{rank}_u \big( x(1-e) \big) =
\mathrm{rank}_u \Big(
\begin{bmatrix}
e & 0\\
x & 0
\end{bmatrix} \Big) =
\mathrm{rank}_u \Big(
\begin{bmatrix}
yx & 0\\
x & 0
\end{bmatrix} \Big) = \mathrm{rank}_u(x).
\end{equation*}
\end{proof}

\begin{cor}
\label{cor:idem_app_ranked-ring}
\textsl{
Let $G^{+}$ be the positive cone of a partially ordered abelian group $G$. Let $\sR$ be a von Neumann regular $(G^{+}, \rho)$-ranked ring.
\begin{itemize}
\item[(i)] Let $x, y \in \sR$ such that $x \sR \subseteq y\sR$. Then $\rho(x) \le \rho(y)$. If $\rho$ is nondegenerate and $\rho(x) = \rho(y)$, then $x \sR = y \sR$.
\item[(ii)] Let $x, y \in \sR$ such that $\sR x \subseteq \sR y$. Then $\rho(x) \le \rho(y)$. If $\rho$ is nondegenerate and $\rho(x) = \rho(y)$, then $\sR x = \sR y$.
\end{itemize}
}
\end{cor}
\begin{proof}
\noindent (i) Let $e, f$ be idempotents in $\sR$ such that $x \sR = e \sR, y \sR = f \sR$. From Proposition \ref{prop:idem_app_ranked-ring}, (i), we have $\rho(x) = \rho(e), \rho(y) = \rho(f)$. From Proposition \ref{prop:idem_app_ranked-ring}, (ii), we have $\rho(f) - \rho(e) = \rho \big( (1-e)f \big) \ge 0$. If $\rho$ is nondegenerate and $\rho(e) = \rho(f)$, we have $(1-e)f = 0$ which implies that $f \sR \subseteq e \sR$, or equivalently, $y \sR \subseteq x \sR$.
\vskip 0.1in

\noindent (ii) Let $e, f$ be idempotents in $\sR$ such that $\sR x =  \sR e, \sR y = \sR f$. From Proposition \ref{prop:idem_app_ranked-ring}, (i), we have $\rho(x) = \rho(e), \rho(y) = \rho(f)$. From Proposition \ref{prop:idem_app_ranked-ring}, (iii), we have $\rho(f) - \rho(e) = \rho \big( f(1-e) \big) \ge 0$. If $\rho$ is nondegenerate and $\rho(e) = \rho(f)$, we have $f(1-e) = 0$ which implies that $\sR f \subseteq \sR e$, or equivalently, $\sR y \subseteq \sR x$.
\end{proof}

\begin{prop}
\label{prop:vNreg_rank}
\textsl{
Let $G^{+}$ be the positive cone of a partially ordered abelian group $G$. Let $\sR$ be a von Neumann regular $(G^{+}, \rho)$-ranked ring. Let $n \in \N$.
\begin{itemize}
\item[(i)] For $x, y \in M_n(\sR)$,  we have $\rho(xy) \le \rho(x)$ and $\rho(xy) \le \rho(y)$. 
\item[(ii)] Let $\rho$ be nondegenerate. Then for $x, y \in M_n(\sR)$ we have the rank equality $\rho(xy) = \rho(x)$ if and only if $xy\sR = x \sR$. Similarly, we have the rank equality $\rho(xy) = \rho(y)$ if and only if $\sR xy = \sR y$.
\item[(iii)] For every pair of mutually orthogonal idempotents $e, f \in M_n(\sR)$, we have $\rho(e+f) = \rho(e) + \rho(f)$.
\end{itemize}
}
\end{prop}
\begin{proof}
By restricting $\rho$ to $M_{\mathrm{fin}}(M_n(\sR)) \subseteq M_{\mathrm{fin}}(\sR)$, we have a $G ^{+}$-valued rank-system for the von Neumann regular ring $M_n(\sR)$. Furthermore, if $\rho$ is nondegenerate, then the restriction of $\rho$ also defines a nondegenerate rank-system for $M_n(\sR)$. Thus we need only prove the assertions in the case of $n=1$.

Let $x, y \in \sR$. Since $xy \sR \subseteq x \sR$ and $\sR xy \subseteq \sR y$, assertions (i)-(ii), follow from Corollary \ref{cor:idem_app_ranked-ring}. Assertion (iii) follows directly from Corollary \ref{cor:app_ortho_rank-ring}.
\end{proof}

\begin{thm}
\label{thm:rank_sys_ring_conv}
\textsl{
Let $\sR$ be a von Neumann regular $(\R ^{+}, \rho)$-ranked ring. Let $\fr ^{(1)}$ denote the restriction of $\rho$ to $\sR$. Then $\fr ^{(1)}$ is a rank-function on $\sR$. Moreover, $\rho(X) = \big( (\delta _k ^*)^{-1} \fr ^{(1)} \big)(X)$ for $X \in M_k(\sR)$, where $\delta _k ^*$ is as defined in the statement of Theorem \ref{thm:pushout_rank}. 
}
\end{thm}
\begin{proof}
Let $\fr ^{(k)}$ denote the restriction of $\rho$ to $M_k(\sR)$. By Proposition \ref{prop:vNreg_rank}, $\fr ^{(k)}$ is a rank-function on the von Neumann regular ring $M_k(\sR)$. Moreover, for $x \in \sR$, we have $\fr ^{(k)}(\odot_{i=1}^k x) = \rho(\odot_{i=1}^k x) = n \rho(x) = n \fr ^{(1)}(x).$ From Theorem \ref{thm:pushout_rank}, we conclude that $\fr ^{(k)} = (\delta _k ^*)^{-1} \fr ^{(1)}$.
\end{proof}

\begin{remark}
\label{rmrk:rank_sys_ring}
From Theorem \ref{thm:rank_sys_ring} and Theorem \ref{thm:rank_sys_ring_conv}, we observe that there is a natural one-to-one correspondence between rank-functions on $\sR$ and $\R ^{+}$-valued rank-systems on $\sR$.
\end{remark}

\begin{prop}[subadditivity of rank]
\label{prop:vNreg_subadd}
\textsl{
Let $G^{+}$ be the positive cone of a partially ordered abelian group $G$. Let $\sR$ be a von Neumann regular $(G^{+}, \rho)$-ranked ring. Then for $x, y \in \sR$, we have
$$\rho(x+y) \le \rho(x) + \rho(y).$$
}
\end{prop}
\begin{proof}
For $x, y \in\sR$, we observe that 
$$
\begin{bmatrix}
x + y & 0\\
0 & 0
\end{bmatrix} = 
\begin{bmatrix}
1 & 1\\
0 & 0
\end{bmatrix}
\begin{bmatrix}
x & 0\\
0 & y
\end{bmatrix}
\begin{bmatrix}
1 & 0\\
1 & 0
\end{bmatrix}.
$$
Applying Proposition \ref{prop:vNreg_rank}, (i), (in the context of $M_2(\sR)$), we conclude that 
$$
\rho \Big(
\begin{bmatrix}
x + y & 0\\
0 & 0
\end{bmatrix} \Big) \le 
\rho \Big( 
\begin{bmatrix}
x & 0\\
0 & y
\end{bmatrix}\Big).
$$
Thus $\rho(x+y) \le \rho(x) + \rho(y).$ 
\end{proof}

\begin{thm}
\label{thm:vna_rank_sub}
\textsl{
Let $\mathscr{R}$ be a finite von Neumann algebra. For $x, y \in \afr$, we have $$\fr _c(x + y) \le \fr _c(x) + \fr _c(y),$$
with equality if and only if $R(x) \wedge R(y) = R(x^*) \wedge R(y^*) = 0$.
}
\end{thm}
\begin{proof}
Although the subadditivity of $\fr _c$ immediately follows from Proposition \ref{prop:vNreg_subadd}, we give an alternate proof below which also naturally yields the conditions for equality.

Clearly $R(x+y) \le R(x) \vee R(y)$. Note that $e := R(x), f := R(y)$ are projections in $\mathscr{R}$. By Lemma \ref{lem:range_proj_id}, (iii), we have $e \vee f = R(e+f)$ and hence $R(x+y) \le R(e+f)$. From Proposition \ref{prop:subadd_lem} and Proposition \ref{prop:rank_prop}, (v)--(vi), we have $$\fr _c(x+y) \le \fr _c(e+f) = \fr _c \big( e(1-f) \big) + \fr_c(f) \le \fr _c(e) + \fr _c(f) = \fr _c(x) + \fr _c(y).$$  
 
Next we turn our attention to the case when equality holds. If $\fr _c(x+y) = \fr _c(x) + \fr _c(y)$, then we have $\fr _c(e) = \fr _c \big( e(1-f) \big)$.  By Lemma \ref{lem:range_proj_id}, (iv), note that  $\fr(e) = \fr(e - e \wedge f)$ and thus by Proposition \ref{prop:rank_prop}, (iv), we conclude that $e \wedge f = R(x) \wedge R(y) = 0.$ Similarly, as by Proposition \ref{prop:rank_prop}, (v),  $\fr _c(x^*+y^*) = \fr _c(x+y) = \fr _c(x) + \fr _c(y) = \fr _c(x^*) + \fr _c(y^*)$, we have $R(x^*) \wedge R(y^*) = 0$.

For the converse, let us assume that $R(x) \wedge R(y) = 0$ and $R(x^*) \vee R(y^*) = 0$. As $e \wedge f = 0$, by Lemma \ref{lem:range_proj_id}, (iv), we have $\fr _c \big( e(1-f) \big) = \fr _c(e)$ and hence $\fr _c(e + f) = \fr _c(e) + \fr _c(f) = \fr _c(x) + \fr _c(y)$. Let $p$ be a projection in $\mathscr{R}$ such that $(x^*+y^*)p = 0$ or equivalently, $x^*p = -y^*p$. Since $R(x^*) \wedge R(y^*) = 0$, we conclude that $x^* p = y^*p = 0$, or equivalently, $p \le N(x^*)\wedge N(y^*)$. Hence $N(x^*+y^*) \le N(x^*) \wedge N(y^*)$. It is straightforward to see that $N(x^*) \wedge N(y^*) \le N(x^*+y^*)$. Thus $N(x^*+y^*) = N(x^*) \wedge N(y^*)$ and using Lemma \ref{lem:range_proj_rel}, (i) and Lemma \ref{lem:range_proj_id}, (iii), we arrive at $R(x+y) = R(x) \vee R(y) = e \vee f$.  Hence $\fr _c(x+y) = \fr _c(e \vee f) = \fr _c(e + f) = \fr _c(x) + \fr _c(y).$
\end{proof}

\subsection{Generalized Cochran's theorem}

\begin{thm}[Generalized Cochran's Theorem]
\label{thm:cochran}
\textsl{
Let $G^+$ be the positive cone of a partially ordered abelian group $G$. Let $\sR$ be a $(G^+, \rho)$-ranked ring where $\rho$ is nondegenerate and subadditive. Let $a_1, \ldots, a_n \in \sR$ and $e$ be an idempotent in $\sR$ such that $\sum_{i=1}^n a_i = e$. Then $\sum_{i=1}^n \rho(a_i) = \rho(e)$ if and only if the $a_i$'s are mutually orthogonal idempotents, that is, $a_i a_j  = \delta_{ij}a_i,$ for $1 \le i, j \le n$.
}
\end{thm}
\begin{proof}
Let $a_1, a_2, \ldots, a_n$ be mutually orthogonal idempotents in $\sR$ so that $e := \sum_{i=1}^n a_i$ is an idempotent. It is easy to see that $f := \sum_{i=2}^n a_i$ is an idempotent, and $a_1, f$ are mutually orthogonal. From Corollary \ref{cor:app_ortho_rank-ring}, we conclude that $\rho(a_1) + \rho(f) = \rho(e)$. Similarly $\rho(a_2) + \rho(\sum_{i=3}^n a_i) = \rho(f)$. Proceeding inductively, we conclude that $\sum_{i=1}^n \rho (a_i) = \rho (e)$.

Next we prove the converse inductively. Consider the first non-trivial base case of $n=2$. Let $a_1, a_2 \in \sR$ such that $e := a_1 + a_2$ is an idempotent. Let $\rho(a_1) + \rho(a_2) = \rho(a_1 + a_2)$. With $f := 1-e$, by Lemma \ref{lem:cochran_rank}, observe that $\rho(a_1) + \rho(a_2) + \rho(f) = \rho(e) + \rho(f) = \rho(1)$. From the sub-additivity of the rank (see Proposition \ref{prop:vNreg_subadd}), we see that $$\rho(1) \le \rho(a_1) + \rho(1-a_1) \le \rho(a_1) + \rho(a_2) + \rho(f) = \rho(1),$$ which implies that $\rho(a_1) + \rho(1 - a_1) = \rho(1)$. Thus by Lemma \ref{lem:cochran_rank}, $a_1 ^2 = a_1$ and by a similar argument, $a_2 ^2 = a_2$. As $\rho(e - a_1) + \rho(a_1) = \rho(e)$, by Corollary \ref{cor:diff_rank_id_1}, we have $e a_1 = a_1 e = a_1 \Longrightarrow a_1 ^2 + a_2 a_1 = a_1 ^ 2 + a_1 a_2 = a_1 \Longrightarrow a_1 a_2 = a_2 a_1 = 0$. We conclude that $a_1, a_2$ are mutually orthogonal idempotents.

For $n \ge 3$, let us assume that the assertion is true for any collection of $n-1$ operators satisfying the given conditions. For $a_1, \ldots, a_n$ in $\sR$, let $e := \sum_{i=1}^n a_i$ be an idempotent  and $\rho(e) = \sum_{i=1}^n \rho(a_i)$. Define $a := a_1 + \cdots + a_{n-1}$ and $b := a_{n}$. Note that $a+b = e$ and from the sub-additivity of the rank, $\rho(e) \le \rho(a) + \rho(b) \le \sum_{i=1}^n \rho(a_i) = \rho(e).$ Thus $\rho(a) + \rho(b) = \rho(e)$. From the base case of $n=2$, we have $a, b$ are idempotents and $ab=ba=0$. Further $\rho(a) = \sum_{i=1}^{n-1} \rho( a_i)$. By the induction hypothesis, we have $a_i a_j = \delta_{ij} a_i, 1\le i, j \le n-1$. Note that we can choose $b$ to be any of the $a_i$'s. Thus we conclude that $a_i a_j = \delta_{ij} a_j, 1\le i, j \le n$.
\end{proof}

\begin{cor}
\label{cor:cochran_vNreg}
\textsl{
Let $G^+$ be the positive cone of a partially ordered abelian group $G$. Let $\sR$ be a von Neumann regular $(G^+, \rho)$-ranked ring where $\rho$ is nondegenerate. Let $a_1, \ldots, a_n \in \sR$ and $e$ be an idempotent in $\sR$ such that $\sum_{i=1}^n a_i = e$. Then $\sum_{i=1}^n \rho(a_i) = \rho(e)$ if and only if the $a_i$'s are mutually orthogonal idempotents, that is, $a_i a_j  = \delta_{ij}a_i,$ for $1 \le i, j \le n$.
}
\end{cor}
\begin{proof}
By Proposition \ref{prop:vNreg_subadd}, $\rho$ is subadditive. The assertion follows from Theorem \ref{thm:cochran}.
\end{proof}

\begin{cor}
\label{cor:cochran_MvN}
\textsl{
Let $\mathscr{R}$ be a finite von Neumann algebra. Let $a_1, \ldots, a_n \in \afr$ and $e$ be an idempotent in $\afr$ such that $\sum_{i=1}^n a_i = e$. Then $\sum_{i=1}^n \fr _c(a_i) = \fr _c(e)$ if and only if the $a_i$'s are mutually orthogonal idempotents, that is, $a_i a_j  = \delta_{ij}a_i,$ for $1 \le i, j \le n$.
}
\end{cor}
\begin{proof}
Note that $\afr$ is a von Neumann regular ring, and $\fr _c$ is a nondegenerate rank-function on $\afr$ taking values in the positive part of the center of $\mathscr{R}$. The assertion follows from Corollary \ref{cor:cochran_vNreg}.
\end{proof}

\begin{thm}
\label{thm:sum_of_idem_fva}
\textsl{
Let $\mathscr{R}$ be a finite von Neumann algebra. For idempotents $e_1, e_2, \ldots, e_n$ in $\mathscr{R}$, $e_1 + e_2 + \cdots + e_n$ is idempotent if and only if the $e_i$'s are mutually orthogonal, that is, $e_i e_j = \delta_{ij} e_i, 1\le i, j \le n$.
}
\end{thm}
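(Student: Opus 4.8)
The plan is to prove the two implications separately, dispatching the easy direction by direct computation and reducing the substantive direction to the Generalized Cochran's Theorem (Theorem \ref{thm:cochran}).

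First I would handle the backward implication. Assuming $E_iE_j = \delta_{ij}E_i$ for all $i,j$, a direct expansion gives
$$\Big(\sum_{i=1}^n E_i\Big)^2 = \sum_{i,j=1}^n E_iE_j = \sum_{i=1}^n E_i^2 = \sum_{i=1}^n E_i,$$
where the cross terms vanish by mutual orthogonality and $E_i^2 = E_i$ by idempotency. Hence $E_1 + \cdots + E_n$ is an idempotent, as required.

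For the forward implication, set $E := E_1 + \cdots + E_n$ and assume $E$ is an idempotent. My plan is to verify the rank hypothesis of Theorem \ref{thm:cochran} and then invoke that theorem with $A_i = E_i$. Since each $E_i$ lies in $\mathscr{R} \subseteq \mathscr{R}_{\textrm{aff}}$ and $E$ is idempotent, the only thing left to check is that $\sum_{i=1}^n \fr(E_i) = \fr(E)$. This is where Lemma \ref{lem:idem_rank} does the essential work: because each $E_i$ and $E$ is an idempotent in $\mathscr{R}$, that lemma yields $\fr(E_i) = \tau(E_i)$ and $\fr(E) = \tau(E)$. Since the center-valued trace $\tau$ is additive, $\tau(E) = \sum_{i=1}^n \tau(E_i)$, and therefore
$$\fr(E) = \tau(E) = \sum_{i=1}^n \tau(E_i) = \sum_{i=1}^n \fr(E_i).$$
Applying Theorem \ref{thm:cochran} now gives that the $E_i$'s are mutually orthogonal idempotents, that is, $E_iE_j = \delta_{ij}E_i$.

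The main obstacle is conceptual rather than computational, and it is entirely absorbed into Lemma \ref{lem:idem_rank}. In general the rank $\fr$ is only subadditive (Theorem \ref{thm:vna_rank_sub}), so the equality $\sum_i \fr(E_i) = \fr(E)$ cannot be expected a priori; it is precisely the coincidence of rank and trace on idempotents, combined with the linearity of $\tau$, that forces this equality despite the $E_i$ being merely idempotents (not projections). Once the rank identity is secured, the Generalized Cochran's Theorem translates it directly into the algebraic orthogonality relations, and no further difficulty arises. It is worth emphasizing that, as the statement makes no reference to $\fr$, the rank machinery serves here purely as an internal device.
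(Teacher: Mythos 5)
Your proof is correct and follows essentially the same route as the paper: the forward direction uses Lemma \ref{lem:idem_rank} together with the additivity of the center-valued trace $\tau$ to obtain $\fr(E) = \sum_{i=1}^n \fr(E_i)$, and then invokes the Generalized Cochran's Theorem (Theorem \ref{thm:cochran}), while the converse is the same direct expansion. No gaps to report.
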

\begin{proof}
Let $\sA := \{ x \in \mathscr{R} : \fr _c(x) = \tau(x) \}$. Clearly $\sA$ contains every projection in $\mathscr{R}$ and their similarity orbits. By \cite[Lemma 16]{kaplansky_mod}, every idempotent in $\mathscr{R}$ is contained in the similarity orbit of a projection in $\mathscr{R}$. Thus for every idempotent $f \in \mathscr{R}$, we have $\fr _c(f) = \tau(f)$. 

Let $e := e_1 + \cdots + e_n$ be an idempotent. From the discussion in the preceding paragraph, we have $\fr _c(e) = \tau(e) = \tau(e_1) + \cdots + \tau(e_n) = \fr _c(e_1) + \cdots + \fr _c(e_n)$. By Theorem \ref{thm:cochran}, we conclude that the $e_i$'s are mutually orthogonal. For the converse, note that $(e_1 + \cdots + e_n)^2 = \sum_{i=1}^n \sum_{j=1}^n e_i e_j.$ 
\end{proof}

\begin{remark}
Let $e_1, e_2$ be idempotents in a ring not of characteristic $2$, such that $e_1 + e_2$ is an idempotent. Note that $e_1 + e_2 = (e_1 + e_2)^2 = e_1 ^2 + e_2 ^2 + e_1 e_2 + e_2 e_1 = e_1 + e_2 + e_1 e_2 + e_2 e_1$. We have
\begin{align*}
    e_1 e_2 = -e_2 e_1 &\Longrightarrow e_1 e_2 (1 - e_1) = 0\\ 
    & \Longrightarrow e_1 e_2 e_1 = e_1 e_2\\ 
    & \Longrightarrow (e_1 e_2)^2 = e_1 e_2.
\end{align*}
Thus $e_1e_2$ is an idempotent and by a symmetric argument $-e_1 e_2 = e_2 e_1$ is an idempotent. Hence $e_1 e_2 = -e_1 e_2 = 0$ and similarly $e_2 e_1=0$. In other words, $e_1$ and $e_2$ are mutually orthogonal.

In light of the above discussion, it is natural to wonder whether the content of Theorem \ref{thm:sum_of_idem_fva} is a purely algebraic fact and that perhaps the introduction of the rank $\fr _c$ is an artifice. As a counterexample, we note that one can find five idempotent operators acting on an infinite-dimensional complex Hilbert space that are not mutually orthogonal and whose sum is $0$, which is an idempotent (see \cite[Example 3.1]{bart_zero_sum}).
\end{remark}

\subsection{The Heisenberg-von Neumann puzzle}

In \cite[Corollary 5.4]{nayak_matrix_mvna}, the rank identity in Proposition \ref{prop:free_assoc_id}, (i), has been used to prove Proposition \ref{prop:heisenberg_drange} below in the setting of $II_1$ factors. It is straightforward to extend the result to general finite von Neumann algebras via $\fr _c$. 

\begin{prop}
\label{prop:heisenberg_drange}
\textsl{
Let $\mathscr{R}$ be a finite von Neumann algebra. Let $p, q \in \afr$ such that $qp-pq = \iu 1$. Then for all $\lambda \in \C$, $p - \lambda 1$ and $q - \lambda 1$ are invertible in $\afr$, that is, the respective point spectra of $p$ and $q$ are empty.
}
\end{prop}

As mentioned in \S \ref{subsec:op_alg_res_overview}, Proposition \ref{prop:heisenberg_drange} makes it amply clear that any strategy towards the resolution of the Heisenberg-von Neumann puzzle must involve the study of similarity orbits in $\afr$.

\section{Future Work}

In this section, we note some problems to direct future work on this topic.

\begin{prob}
\textsl{
Let $\K$ be a field. Characterize the $\fU \fR$-monoid of $\K \langle t_1, t_2 \rangle$. More generally, for $n \in \N$, characterize the $\fU \fR$-monoid of the free associative algebra in $n$ indeterminates over $\K$. 
}
\end{prob}

\begin{prob}
\textsl{
Let $\K$ be a field. Characterize the $\fU \fR$-monoid of $\K \langle t_1, t_2\rangle/(t_1 - t_1 ^2, t_2 - t_2 ^2)$, the universal associative $\K$-algebra generated by two idempotents. More generally, for $n \in \N$, characterize the $\fU \fR$-monoid of the universal associative $\K$-algebra generated by $n$ idempotents.
}
\end{prob}

\begin{prob}
\textsl{
Let $X$ be a compact Hausdorff space. Characterize the $\fU \fR$-monoid of $C(X)$, the ring of continuous complex-valued functions on $X$. (Of particular interest are compact subsets of $\C$ for applications, via the continuous function calculus, to rank identities involving normal operators.)
}
\end{prob}

\begin{prob}
\textsl{
Let $G^{+}$ be the positive cone of a partially ordered abelian group $G$. Let $\sR$ be a von Neumann regular $(G^{+}, \rho)$-ranked ring. For $x, y \in \sR$, with the help of Proposition \ref{prop:vNreg_rank}, (ii), it can be shown that if $\rho$ is nondegenerate and $\rho(x+y) = \rho(x) + \rho(y)$, then $x \sR + y \sR = (x+y)\sR$ and $\sR x + \sR y = \sR (x+y)$. Does the converse hold?
}
\end{prob}

\section{Acknowledgments}
I would like to express my gratitude to Gregory Grant for his lectures on linear statistical models which introduced me to Cochran's theorem that led me down a path which culminated in this paper. I would also like to thank Max Gilula, Dick Kadison, Ben Pollak, K.\ V.\ Shuddhodan, and B.\ Sury for helpful discussions and suggestions. A special note of thanks to an anonymous referee whose suggestions greatly helped improve the presentation of the material.

\bibliographystyle{plain}
\bibliography{reference}

\begin{thebibliography}{10}

\bibitem{bart_zero_sum}
H.~Bart, T.~Ehrhardt, and B.~Silbermann.
\newblock Zero sums of idempotents in {B}anach algebras.
\newblock {\em Integral Equations Operator Theory}, 19(2):125--134, 1994.

\bibitem{birkhoff-latt}
Garrett Birkhoff.
\newblock {\em Lattice theory}.
\newblock Third edition. American Mathematical Society Colloquium Publications,
  Vol. XXV. American Mathematical Society, Providence, R.I., 1967.

\bibitem{blizard}
Wayne~D. Blizard.
\newblock Multiset theory.
\newblock {\em Notre Dame J. Formal Logic}, 30(1):36--66, 1989.

\bibitem{tr_rank}
Jianlian Cui and Jinchuan Hou.
\newblock Linear maps on von {N}eumann algebras preserving zero products or
  {TR}-rank.
\newblock {\em Bull. Austral. Math. Soc.}, 65(1):79--91, 2002.

\bibitem{goodearl}
K~R Goodearl.
\newblock {\em Von Neumann Regular Rings}.
\newblock Academic Press, Inc. [Harcourt Brace Jovanovich, Publishers], New
  York, second edition, 1980.
\newblock Functional analysis.

\bibitem{diag_kadison}
Richard~V. {Kadison}.
\newblock {Diagonalizing matrices}.
\newblock {\em {Am. J. Math.}}, 106:1451--1468, 1984.

\bibitem{kadison-liu}
Richard~V. Kadison and Zhe Liu.
\newblock The {H}eisenberg relation---mathematical formulations.
\newblock {\em SIGMA Symmetry Integrability Geom. Methods Appl.}, 10:Paper 009,
  40, 2014.

\bibitem{kadison-ringrose1}
Richard~V. Kadison and John~R. Ringrose.
\newblock {\em Fundamentals of the theory of operator algebras. {V}ol. {I}},
  volume~15 of {\em Graduate Studies in Mathematics}.
\newblock American Mathematical Society, Providence, RI, 1997.
\newblock Elementary theory, Reprint of the 1983 original.

\bibitem{kadison-ringrose2}
Richard~V. Kadison and John~R. Ringrose.
\newblock {\em Fundamentals of the theory of operator algebras. {V}ol. {II}},
  volume~16 of {\em Graduate Studies in Mathematics}.
\newblock American Mathematical Society, Providence, RI, 1997.
\newblock Advanced theory, Corrected reprint of the 1986 original.

\bibitem{kaplansky_mod}
Irving {Kaplansky}.
\newblock {Modules over operator algebras}.
\newblock {\em {Am. J. Math.}}, 75:839--858, 1953.

\bibitem{the_rota_way}
Joseph P.~S. Kung, Gian-Carlo Rota, and Catherine~H. Yan.
\newblock {\em Combinatorics: the {R}ota way}.
\newblock Cambridge Mathematical Library. Cambridge University Press,
  Cambridge, 2009.

\bibitem{nayak_matrix_mvna}
Soumyashant Nayak.
\newblock Matrix algebras over $*$-algebras of unbounded operators.
\newblock {\em Banach J. Math. Anal.}, 14:1055--1079, 2020.

\bibitem{nayak_mvna}
Soumyashant Nayak.
\newblock On {M}urray-von {N}eumann algebras -- {I}: topological,
  order-theoretic and analytical aspects.
\newblock {\em Banach J. Math. Anal.}, 15, 2021.

\bibitem{simon-reed}
Michael Reed and Barry Simon.
\newblock {\em Methods of modern mathematical physics. {I}}.
\newblock Academic Press, Inc. [Harcourt Brace Jovanovich, Publishers], New
  York, second edition, 1980.
\newblock Functional analysis.

\bibitem{tian-styan1}
Yongge Tian and George P.~H. Styan.
\newblock Rank equalities for idempotent and involutory matrices.
\newblock {\em Linear Algebra Appl.}, 335:101--117, 2001.

\bibitem{tian-styan2}
Yongge Tian and George P.~H. Styan.
\newblock Rank equalities for idempotent matrices with applications.
\newblock {\em J. Comput. Appl. Math.}, 191(1):77--97, 2006.

\end{thebibliography}

\end{document}